\title{Casson towers and filtrations\\ of the smooth knot concordance group}
\author{Arunima Ray}
\address{Department of Mathematics, MS-050, Brandeis University, 415 South St., Waltham, MA 02453.}
\email{aruray@brandeis.edu}
\urladdr{http://people.brandeis.edu/~aruray}
\newtheorem{proposition}{Proposition}[section]
\newtheorem*{cor_33'}{Corollary 3.3$'$}
\newtheorem{theorem}[proposition]{Theorem}
\newtheorem*{thm_1}{Theorem 1}
\newtheorem*{thm_A}{Theorem A}
\newtheorem*{thm_A'}{Theorem A$'$}
\newtheorem*{thm_B}{Theorem B}
\newtheorem*{thm_B'}{Theorem B$'$}
\newtheorem{corollary}[proposition]{Corollary}
\newtheorem*{cor_1}{Corollary 1}
\newtheorem*{cor_2}{Corollary 2}
\theoremstyle{definition}
\newtheorem{definition}[proposition]{Definition}
\newtheorem{remark}[proposition]{Remark}
\newtheorem{example}[proposition]{Example}
\newtheorem*{defn_1}{Definition 1}
\newtheorem*{defn_2}{Definition 2}
\newtheorem*{defn_3}{Definition 3}
\newtheorem*{defn_4}{Definition 4}
\newtheorem*{defn_1'}{Definition 1$'$}
\newtheorem*{defn_2'}{Definition 2$'$}
\newtheorem*{defn_3'}{Definition 3$'$}
\newtheorem*{defn_4'}{Definition 4$'$}
\newcommand{\Wh}[1][\pm]{\text{Wh}^{#1}}
\newcommand{\CP}{\mathbb{CP}}
\newcommand{\Arf}{\text{Arf}}
\begin{document}

\begin{abstract}    
The \textit{$n$--solvable filtration} $\{\mathcal{F}_n\}_{n=0}^\infty$ of the smooth knot concordance group (denoted by $\mathcal{C}$), due to Cochran--Orr--Teichner, has been instrumental in the study of knot concordance in recent years. Part of its significance is due to the fact that certain geometric attributes of a knot imply membership in various levels of the filtration. We show the counterpart of this fact for two new filtrations of $\mathcal{C}$ due to Cochran--Harvey--Horn, the \textit{positive} and \textit{negative} filtrations, denoted by $\{\mathcal{P}_n\}_{n=0}^\infty$ and $\{\mathcal{N}_n\}_{n=0}^\infty$ respectively. In particular, we show that if a knot $K$ bounds a \textit{Casson tower} of height $n+2$ in $B^4$ with only positive (resp. negative) kinks in the base-level kinky disk, then $K\in\mathcal{P}_n$ (resp. $\mathcal{N}_n$). En route to this result we show that if a knot $K$ bounds a Casson tower of height $n+2$ in $B^4$, it bounds an embedded (symmetric) \textit{grope} of height $n+2$, and is therefore, $n$--solvable. We also define a variant of Casson towers and show that if $K$ bounds a tower of type $(2,\,n)$ in $B^4$, it is $n$--solvable. If $K$ bounds such a tower with only positive (resp. negative) kinks in the base-level kinky disk then $K\in \mathcal{P}_n$ (resp. $K\in \mathcal{N}_n$). Our results show that either every knot which bounds a Casson tower of height three is topologically slice or there exists a knot in $\bigcap \mathcal{F}_n$ which is not topologically slice. We also give a 3--dimensional characterization, up to concordance, of knots which bound kinky disks in $B^4$ with only positive (resp. negative) kinks; such knots form a subset of $\mathcal{P}_0$ (resp. $\mathcal{N}_0$).
\end{abstract}

\maketitle

\section{Introduction}
A \textit{knot} is the image of a smooth embedding $S^1 \hookrightarrow S^3 = \partial B^4$. A knot is called \textit{slice} if it bounds a smooth, properly embedded disk in $B^4$. The set of knots, modulo slice knots, under the connected sum operation forms an abelian group called the \textit{knot concordance group}, denoted by $\mathcal{C}$. We will often use the same letter to denote a knot $K$ and its concordance class. There is a parallel theory of concordance in the topological category. In particular, a knot is called \textit{topologically slice} if it bounds a proper, topologically embedded, locally flat disk in $B^4$. There exist infinitely many knots which are topologically slice but not smoothly slice (see, for example, \cite{En95,Gom86, HeK12, HeLivRu12, Hom11}). 

Much like the 3--dimensional study of knots frequently focuses on determining `how close a knot is to being unknotted', the 4--dimensional study attempts to assess `how close a knot is to being slice'. In 2003, this notion was formalized when Cochran--Orr--Teichner \cite{COT03} introduced the \textit{$n$--solvable filtration} of $\mathcal{C}$ and showed that the lower levels of the filtration encapsulate the information one can extract from various classical concordance invariants, such as algebraic concordance class, Levine--Tristram signatures, Casson--Gordon invariants, etc. Therefore, in an almost quantifiable sense, the deeper a knot is within the $n$--solvable filtration, the closer it is to being slice. Studying filtrations gives us a way of understanding the structure of $\mathcal{C}$, a large unwieldy object, in terms of smaller (and hopefully simpler) pieces. 

Part of the justification for the naturality of the $n$--solvable filtration is its close relationships with several more geometric filtrations of $\mathcal{C}$. In particular, certain geometric attributes imply membership in various levels of the $n$--solvable filtration, as follows. 

\begin{thm_1}[Theorems 8.11 and 8.12 of \cite{COT03}] If a knot $K$ bounds a grope of height $n+2$, then $K$ is $n$--solvable. If a knot $K$ bounds a Whitney tower of height $n+2$, then $K$ is $n$--solvable. \end{thm_1} 

Cochran--Harvey--Horn \cite{CHHo13} have recently introduced a new pair of filtrations (by monoids) of $\mathcal{C}$, the \textit{positive} and \textit{negative} filtrations: 
$$\cdots \subseteq \mathcal{P}_{n+1} \subseteq \mathcal{P}_n \subseteq \cdots \subseteq \mathcal{P}_0\subseteq \mathcal{C}$$
$$\cdots \subseteq \mathcal{N}_{n+1} \subseteq \mathcal{N}_n \subseteq \cdots \subseteq \mathcal{N}_0\subseteq \mathcal{C}$$
(see Section \ref{defns} for precise definitions). These new filtrations have proven to be of interest because they can be used to study smooth concordance classes of topologically slice knots; this distinguishes them from the $n$--solvable filtration, since if $K$ is topologically slice, $K$ is $n$--solvable for all $n$. Cochran--Harvey--Horn also defined the bipolar filtration (by subgroups) of $\mathcal{C}$, $\mathcal{B}_n:=\mathcal{P}_n\cap\,\mathcal{N}_n$ \cite{CHHo13}, and it is expected that this filtration will non-trivially filter topologically slice knots at each $n$, i.e.\ if $\mathcal{T}_n=\mathcal{B}_n \cap \{\text{topologically slice knots}\}$, it is expected that $\mathcal{T}_n\neq \mathcal{T}_{n+1}$. This is currently known for knots at $n\leq 1$ \cite{CHHo13, Cho12}. For links of two or more components, this is known for all $n$ by work of Cha--Powell \cite{ChaPow14}.

In this paper we will prove counterparts of Theorem 1 for the positive and negative filtrations in terms of \textit{Casson towers} \cite{Cas86, Freed82}: 4--dimensional objects built using layers of immersed disks (see Figure \ref{schematictower} for a schematic picture). In particular, we define several new filtrations of $\mathcal{C}$: $\{\mathfrak{C}_n\}_{n=1}^\infty$, $\{\mathfrak{C}_n^+\}_{n=1}^\infty$, $\{\mathfrak{C}_n^-\}_{n=1}^\infty$, $\{\mathfrak{C}_{2,\,n}\}_{n=1}^\infty$, $\{\mathfrak{C}_{2,\,n}^+\}_{n=1}^\infty$, and $\{\mathfrak{C}_{2,\,n}^-\}_{n=1}^\infty$.

\begin{figure}[ht!]
  \begin{center}
  \includegraphics[width=2in]{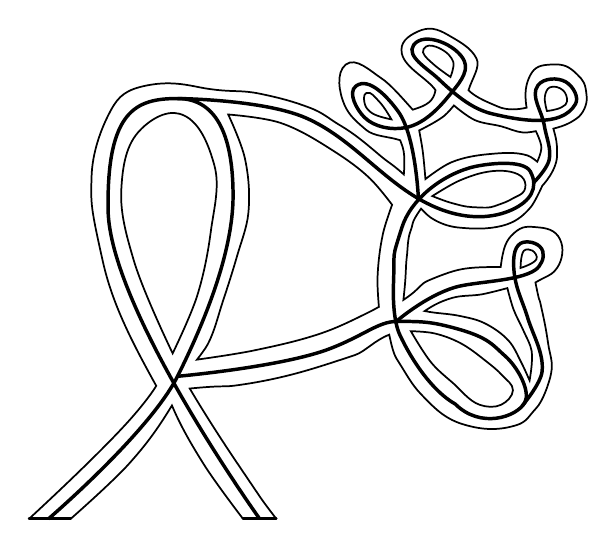}
  \caption{Schematic diagram of a Casson tower of height three.}\label{schematictower}
  \end{center}
\end{figure} 

Any knot $K$ that can be changed to a slice knot by only changing positive crossings to negative crossings is known to be in $\mathcal{P}_0$ by \cite[Proposition 3.1]{CHHo13} and \cite[Lemma 3.4]{CLic86}. Such a knot also bounds an immersed disk in $B^4$ with only positive self-intersections (i.e.\ \textit{kinks}). Indeed if a knot $K$ bounds an immersed disk in $B^4$ with only positive kinks, we can `blow up' the kinks, i.e.\, connect-sum with a $\mathbb{CP}(2)$ at each kink, to obtain a slice disk for $K$ in a 4--manifold with positive definite intersection form as called for in the definition for $\mathcal{P}_0$. (This reveals how the definition of $\mathcal{P}_0$ is a generalization of both the ordering on knot concordance classes given by \cite{CGom88} and \cite{CLic86}, and the notion of \textit{kinkiness} of knots defined by Gompf in \cite{Gom86}.) Similar statements hold for knots bounding immersed disks with only negative kinks and $\mathcal{N}_0$. Since bounding an immersed disk is closely  related to membership in the zero'th level of the positive and negative filtrations, Casson towers---built using layers of immersed disks---are natural objects to study in this context. 

In this paper, we establish several relationships between various filtrations of $\mathcal{C}$ (Theorem A) and completely characterize knots in $\mathfrak{C}_1^\pm$, i.e.\ knots which bound kinky disks in $B^4$ with only positive (resp. negative) kinks (Theorem B) as follows. 

\begin{thm_A}\label{inclusions} Let  $\{\mathcal{F}_n\}_{n=0}^\infty$ denote the $n$--solvable filtration of $\mathcal{C}$ and $\{\mathcal{G}_n\}_{n=0}^\infty$ the (symmetric) \textit{grope filtration} of $\mathcal{C}$. $\{\mathcal{G}_{2,\,n}\}_{n=0}^\infty$ is a slight enlargement of the grope filtration. (Precise definitions for the filtrations can be found in Section \ref{defns}.) 

For any $n\geq 0$,
\begin{itemize}
\item[(i)]$\mathfrak{C}_{n+2}\subseteq\mathcal{G}_{n+2}\subseteq\mathcal{F}_n$,
\item[(ii)]$\mathfrak{C}_{2,\,n}\subseteq\mathcal{G}_{2,\,n}\subseteq\mathcal{F}_n$,
\item[(iii)]$\mathfrak{C}_{n+2}^+ \subseteq \mathfrak{C}_{2,\,n}^+ \subseteq \mathcal{P}_n$,
\item[(iv)]$\mathfrak{C}_{n+2}^- \subseteq \mathfrak{C}_{2,\,n}^-\subseteq \mathcal{N}_n.$
\end{itemize}
\end{thm_A}

\begin{thm_B}For any knot $K$, the following statements are equivalent.
\begin{itemize}
\item[(a)] $K\in \mathfrak{C}_1^+$ (resp. $\mathfrak{C}_1^-$)
\item[(b)] $K$ is concordant to a fusion knot of split positive (resp. negative) Hopf links
\item[(c)] $K$ is concordant to a knot which can be changed to a ribbon knot by changing only positive (resp. negative) crossings.
\end{itemize}
\end{thm_B}
The second inclusion in part (ii) of Theorem A is exactly the second result listed earlier in Theorem 1 \cite[Theorem 8.11]{COT03} and we only include it here for completeness. 

Let $\mathcal{W}_n$ denote the set of knots which bound \textit{Whitney towers} of height $n$ in $B^4$. Whitney towers are similar to Casson towers except that kinks appear in pairs of opposing sign and higher-stage disks are attached to curves which traverse from one kink in a pair to the other (see \cite{FreedQ90} for more details).  It is well-known that any Casson tower yields a Whitney tower with the same attaching curve; one may see this using Kirby diagrams (the basic idea is that, in a Casson tower, we can locally introduce a kink of the opposite sign at any kink in such a way that the attaching curve for the higher-stage disk in the Casson tower is changed appropriately). As a result, in conjunction with Theorem 1 \cite[Theorem 8.12]{COT03}, it was already known that $\mathfrak{C}_{n+2}\subseteq\mathcal{W}_{n+2}\subseteq\mathcal{F}_n$. Our contribution consists of showing that if a knot bounds a Casson tower $T$ of height $n$ in $B^4$, it bounds a properly embedded grope of height $n$ within $T$ (Proposition \ref{towersandgropes}). In contrast, Schneiderman has shown that if a knot bounds a properly embedded grope of height $n$ in $B^4$, it bounds a Whitney tower of height $n$ in $B^4$ \cite[Corollary 2]{Schn06}. The converse to Schneiderman's statement is not known. In summary, it was previously known that $\mathcal{G}_{n+2}\subseteq\mathcal{W}_{n+2}\subseteq\mathcal{F}_n$ and $\mathfrak{C}_{n+2}\subseteq\mathcal{W}_{n+2}\subseteq\mathcal{F}_n$. We have now shown that $\mathfrak{C}_{n+2}\subseteq\mathcal{G}_{n+2}\subseteq\mathcal{W}_{n+2}\subseteq\mathcal{F}_n$.

We will see that $\mathfrak{C}_n^\pm\subseteq\mathfrak{C}_n$ and $\mathfrak{C}_{2,\,n}^\pm\subseteq\mathfrak{C}_{2,\,n}$ for all $n$, and therefore parts (i) and (ii) of Theorem A imply that $\mathfrak{C}_{n+2}^\pm\subseteq \mathcal{F}_n$ and $\mathfrak{C}_{2,\,n}^\pm\subseteq\mathcal{F}_n$. Along with \cite[Proposition 5.5]{CHHo13} which states that $\mathcal{P}_n\subseteq\mathcal{F}_n^\text{odd}$ (and $\mathcal{N}_n\subseteq\mathcal{F}_n^\text{odd}$), we get the following inclusions for each $n$.  ($\{\mathcal{F}_n^\text{odd}\}_{n=0}^\infty$ is a larger filtration than the $n$--solvable filtration, i.e.\ $\mathcal{F}_n\subseteq\mathcal{F}_n^\text{odd}$ for each $n$.)

\vspace{10pt}
\hspace*{\fill}
$\begin{array}[c]{ccc}
\mathcal{F}_n&\subseteq &\mathcal{F}_n^\text{odd}\\
\rotatebox{90}{$\subseteq$}&&\rotatebox{90}{$\subseteq$}\\
\mathfrak{C}_{n+2}^+&\subseteq&\mathcal{P}_n
\end{array}$
\hspace*{\fill}
$\begin{array}[c]{ccc}
\mathcal{F}_n&\subseteq &\mathcal{F}_n^\text{odd}\\
\rotatebox{90}{$\subseteq$}&&\rotatebox{90}{$\subseteq$}\\
\mathfrak{C}_{2,\,n}^+&\subseteq&\mathcal{P}_n
\end{array}$
\hspace*{\fill}
$\begin{array}[c]{ccc}
\mathcal{F}_n&\subseteq &\mathcal{F}_n^\text{odd}\\
\rotatebox{90}{$\subseteq$}&&\rotatebox{90}{$\subseteq$}\\
\mathfrak{C}_{n+2}^-&\subseteq&\mathcal{N}_n
\end{array}$
\hspace*{\fill}
$\begin{array}[c]{ccc}
\mathcal{F}_n&\subseteq &\mathcal{F}_n^\text{odd}\\
\rotatebox{90}{$\subseteq$}&&\rotatebox{90}{$\subseteq$}\\
\mathfrak{C}_{2,\,n}^-&\subseteq&\mathcal{N}_n
\end{array}$
\hspace*{\fill}
\vspace{10pt}

We state the following corollaries to facilitate easy reference in our proofs and examples. They may be considered to be corollaries of Theorem A or of Theorem 1 along with the fact that Casson towers yield Whitney towers with the same attaching curve.

\begin{cor_1}If a knot $K$ lies in $\mathfrak{C}_2$, $\Arf\,(K)=0$.\end{cor_1}

\begin{cor_2}If a knot $K$ lies in $\mathfrak{C}_{2,\,1}$, then $K$ is algebraically slice.\end{cor_2}

The above statements follow easily from well-known properties of the $n$--solvable filtration, namely, any knot in $\mathcal{F}_0$ has trivial Arf invariant and any knot in $\mathcal{F}_1$ is algebraically slice \cite{COT03}.

Gompf--Singh's refinement of Freedman's Reimbedding Theorem for Casson towers \cite[Theorem 4.4]{Freed82}\cite[Theorem 5.1]{GomSin84} implies that the filtrations $\{\mathfrak{C}_n\}_{n=1}^\infty$ and $\{\mathfrak{C}_n^\pm\}_{n=1}^\infty$ stablize at $n=5$, i.e.\ $\mathfrak{C}_5 = \mathfrak{C}_6 = \mathfrak{C}_7 = \cdots$ and $\mathfrak{C}_5^\pm = \mathfrak{C}_6^\pm = \mathfrak{C}_7^\pm = \cdots$. In fact, as we describe in Section \ref{defns}, $\mathfrak{C}_5$ is \textit{equal} to $\mathcal{T}$, the set of all topologically slice knots. It is possible (and is believed by some experts) that $\mathfrak{C}_3$ is equal to $\mathcal{T}$. It is worth noting that while $\mathfrak{C}_5=\mathcal{T}$, each of $\mathfrak{C}_5^\pm$ is a \textit{proper} subset of $\mathcal{T}$. This mirrors the fact that the positive/negative filtrations are able to distinguish topologically slice knots while the $n$--solvable filtration cannot. 

As we see above the $\{\mathfrak{C}_{n}\}_{n=1}^\infty$ filtration stabilizes at $n=5$ (or conjecturally at $n=3$). It is also easy to see that $\mathfrak{C}_1=\mathcal{C}$, i.e.\ any knot bounds an immersed disk in $B^4$. This indicates that if one is interested in studying smooth concordance classes of topologically slice knots one should focus on these levels. The filtration $\{\mathfrak{C}_{2,\,n}\}_{n=0}^\infty$ is designed specifically to filter knots within these levels, in particular, between $\mathfrak{C}_2$ and $\mathfrak{C}_3$. 

We also see, in Corollary \ref{height3}, that $\mathfrak{C}_3\subseteq \mathfrak{C}_{2,\,n}$ and $\mathfrak{C}_3^\pm\subseteq \mathfrak{C}_{2,\,n}^\pm$ for all $n\geq 0$. From part (i) of Theorem A then,
$$\mathfrak{C}_3\subseteq\bigcap_{n=0}^\infty\mathcal{F}_n.$$ 
The only presently known elements of $\bigcap_{n=0}^\infty\mathcal{F}_n$ are topologically slice knots and it is conjectured that $\bigcap_{n=0}^\infty\mathcal{F}_n=\mathcal{T}$. From the above, we can infer that either any knot bounding a Casson tower of height three is topologically slice or there exist knots in $\bigcap_{n=0}^\infty\mathcal{F}_n$ which are not topologically slice. Similarly, since $$\bigcap_{n=0}^\infty\mathfrak{C}_{2,\,n}\subseteq\bigcap_{n=0}^\infty\mathcal{F}_n$$ we are led to conjecture that any knot in $\bigcap \mathfrak{C}_{2,\,n}$ is topologically slice. 

By parts (iii) and (iv) of Theorem A,
$$\mathfrak{C}_3^+\subseteq\bigcap_{n=0}^\infty\mathcal{P}_n \text{ and } \mathfrak{C}_3^-\subseteq \bigcap_{n=0}^\infty\mathcal{N}_n.$$

This indicates that membership in $\mathfrak{C}_3^\pm$ is a very restrictive condition. For example, the results of \cite{CHHo13} show how membership in just the zero'th and first levels of the positive and negative filtrations impose severe restrictions on smooth concordance class. This also reveals that while the positive and negative filtrations have had success in distinguishing concordance classes of topologically slice knots, they cannot be used to distinguish between topologically slice knots in $\mathfrak{C}^\pm_3$. 

\subsection{Organization of the paper}

We will start by stating precise definitions of Casson towers and the various filtrations of $\mathcal{C}$ in Section \ref{defns}. Sections \ref{tower_results} and \ref{c1+} consist of the proofs of Theorems A and B respectively; additionally in Section \ref{c1+} we give an overview of various notions of positivity of knots and how membership in $\mathcal{P}_0$ and $\mathfrak{C}_1^+$ are related to them. In Section \ref{exprop} we will list various properties of the Casson tower filtrations. We generalize our results to the case of (string) links in Section \ref{links}.

\subsection{Acknowledgements}

This paper was completed as part of the author's doctoral work at Rice University. The author would like to thank her advisor Tim Cochran for his time, guidance, and continued encouragement. Special thanks and much gratitude are also due to Robert Gompf for his insights into Casson towers and his patience in replying to the author's persistent emails, to Jae Choon Cha for his helpful comments on an earlier draft, and to the anonymous referee whose detailed and thoughtful remarks and suggestions significantly improved this paper. The author was partially supported by NSF--DMS--1309081 and the Nettie S.\ Autrey Fellowship (Rice University). 

\section{Notation and definitions}\label{defns}

\subsection{Casson towers}Suppose $f: D \rightarrow M$ is a smooth self-transverse immersion, where $D$ is a genus zero, oriented 2--manifold, $M$ is an oriented, smooth 4--manifold, and $f^{-1}(\partial M)=\partial D$. We will refer to the points of self-intersection of $f(D)$ as \textit{kinks} and $f(D)$ as being \textit{kinky}. In this paper we will only use kinky disks, that is, the case where $D$ is a disk. Since $M$ and $D$ are oriented, each kink of $f(D)$ has a canonical sign. A regular neighborhood of a kinky disk in a 4--manifold will be called a \textit{kinky handle}. For a kinky handle which is a regular neighborhood of the kinky disk $f(D)$, the \textit{attaching curve} is the simple closed curve $f(\partial D)$. 

\begin{figure}[t!]
  \begin{center}
  \includegraphics[width=3in]{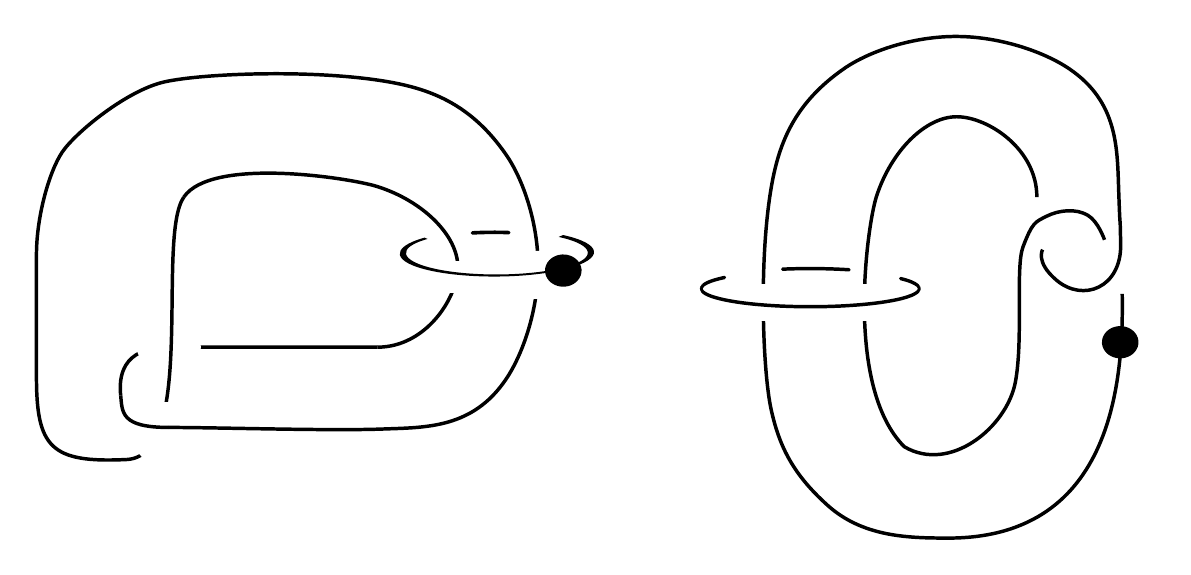}
  \caption{Two Kirby diagrams for a kinky handle with a single positive kink. The two panels are pictures of the same space and differ only by an isotopy of curves; we show both versions since each will appear later in the paper. The dotted curve represents a 1--handle, and the other curve is the attaching curve for the kinky handle.}\label{kinkyex}
  \end{center}
\end{figure}

In our proofs we will frequently utilize Kirby diagrams to describe 4--manifolds. Background on Kirby diagrams and Kirby calculus can be found in \cite{GomStip99}. Kirby diagrams for a kinky handle with a single positive kink are given in Figure \ref{kinkyex}, where the sign of the clasp corresponds to the sign of the kink. To obtain pictures for a kinky handle with a single negative kink, we need simply to use the negative clasp. It is important to note that the leftmost (undecorated) curves in the two diagrams do not represent attaching circles for handles but rather the attaching curve for the kinky handle itself. This indicates that a kinky handle with a single kink is diffeomorphic to $S^1\times D^3$ since it has a Kirby diagram consisting of a single dotted circle. A Kirby diagram for a kinky handle with $n$ kinks would consist of $n$ unlinked unknotted circles decorated with dots (indicating correctly that the corresponding 4--manifold is diffeomorphic to $\natural_n S^1\times D^3$) with an (undecorated) attaching curve passing through each dotted circle to clasp itself according to the sign of the kink (see Figure \ref{height2tower}). For more details, the interested reader is directed to Chapter 6 of \cite{GomStip99}. 

The 0--framed meridians of the dotted circles in the Kirby picture for a kinky handle $\kappa$ form the \textit{standard set of curves} for $\kappa$; this set is characterized by the property that if we were to attach 2--handles to these (framed) curves, the resulting 4--manifold $(\kappa, \text{attaching curve})$ would be diffeomorphic to the standard 2--handle $(D^2\times D^2, \partial D^2\times \{0\})$. There is also a notion of a canonical framing for the attaching curve of a kinky handle $\kappa$, namely the unique framing such that if a 2--handle were attaching to $\kappa$ along the attaching curve with that framing, the resulting 4--manifold would have intersection form zero. This is equivalent to saying that  if one pushes off a parallel copy of the attaching curve by its canonical framing, the two circles should bound disjoint embedded surfaces inside $\kappa$. Note that this notion is distinct from the framing one gets from the normal bundle of the core kinky disk for $\kappa$, and in fact these two notions differ by exactly twice the number of (signed) self-intersections of the core kinky disk for $\kappa$ (see \cite{GomSin84}).

Using kinky handles we may construct a \textit{Casson tower}. Detailed descriptions of Casson towers may be found in \cite{Cas86, Freed82, GomSin84}. A Casson tower of height one is simply a kinky handle. A Casson tower of height two is obtained from a Casson tower of height one, $T_1$, by attaching kinky handles to each member of a standard set of curves for $T_1$ by matching the framings (recall that for any kinky handle the attaching curve and each member of the standard set of curves are framed). We refer to these newly attached kinky handles as the `second-stage kinky handles'. Suppose that towers of height $n$ have been defined. We define a \textit{standard set of curves} for a height $n$ tower to be the union of standard sets of curves for each $n^\text{th}$--stage kinky handle. We then construct a height $n+1$ Casson tower by attaching kinky handles (`$n+1^\text{th}$--stage kinky handles') to each member of a standard set of curves for a Casson tower of height $n$. The corresponding infinite construction, i.e.\ a Casson tower with infinite height, is called a \textit{Casson handle}. 

\begin{figure}[t!]
  \begin{center}
  \includegraphics[width=\textwidth]{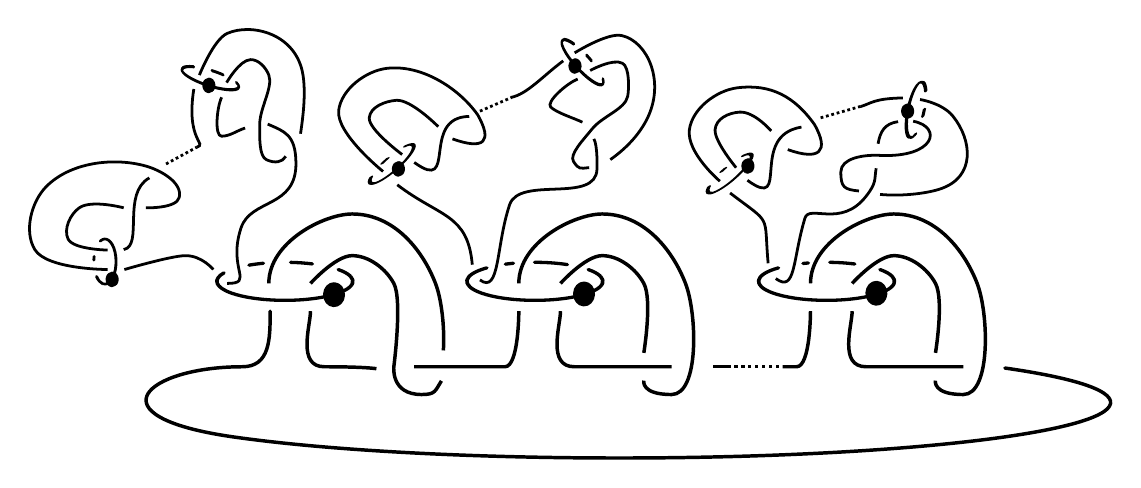}
  \put(-5,0.9){0}
  \put(-3.25,1.3){0}
  \put(-1.82,1.1){0}
  \caption{A Kirby diagram for a general Casson tower of height two. The bottommost curve in the picture is the attaching curve.}\label{height2tower}
  \end{center}
\end{figure}

We will consider every Casson tower to have a fixed decomposition into kinky handles. A Kirby diagram for a general Casson tower of height two is shown in Figure \ref{height2tower}. The \textit{attaching curve} for a Casson tower or handle is the attaching curve for the first-stage kinky handle; in Figure \ref{height2tower} it appears as the bottommost (undecorated) curve. In the Kirby diagram, the parallel of the attaching curve with linking number zero is the push off along the canonical framing (we can infer this from the fact that the two curves bound disjoint surfaces in the first-stage kinky handle, as we see at the beginning of the proof of Proposition \ref{towersandgropes}). The standard set of curves for a Casson tower appear in the diagram as (0--framed) meridians of the dotted circles of the last layer of kinky handles, that is, simple loops traversing the terminal 1--handles exactly once; note that these curves generate the fundamental group of the Casson tower (since a Casson tower is diffeomorphic to $\natural S^1\times D^3$, its fundamental group is free). Sometimes we will also refer to the meridians of the dotted circles at a given stage within a Casson tower. For example, we might refer to the standard set of curves at the second stage of a Casson tower of height four. If a stage of the Casson tower is not specified, we refer to the standard set of curves at the terminal stage. 

Every Casson tower has a 2--complex as a strong deformation retract, called its \textit{core}. For a Casson tower of height one, namely the regular neighborhood of a kinky disk $D$, the core is exactly $D$. For a Casson tower of greater height, the core consists of the cores of each kinky handle along with certain canonical annuli. This is described in greater detail in \cite[Section 2.2.6]{GomSin84}.

We will say that a curve $\gamma\subseteq \partial M$ which is null-homologous in $\partial M$ `bounds a Casson tower $T$ in a 4--manifold $M$' if there is a proper embedding of $T$ in $M$ where a 0--framed regular neighborhood of the attaching curve of $T$ (seen in a Kirby diagram for $T$) is identified with a 0--framed neighborhood of $\gamma$ in $\partial M$. If the 4--manifold is not mentioned, the reader should assume it to be $B^4$. In particular, this means that if a knot $K$ is said to bound, say, the Casson tower $T$ shown in Figure \ref{height2tower}, the 0--framed longitude of $K$ in $S^3$ can be seen as the 0--framed longitude of the attaching curve of $T$, i.e.\ it appears as the parallel of the attaching curve with zero linking number.

Recall that for any group $G$, $G^{(n)}$ denotes the $n^{\text{th}}$ term of its derived series. 

\begin{defn_1}A knot $K$ is said to be in $\mathfrak{C}_n$ if it bounds a Casson tower of height $n$. \end{defn_1}

\begin{defn_2}A knot $K$ is said to be in $\mathfrak{C}_{2,\,n}$ if it bounds a Casson tower $T$ of height two such that each member of a standard set of curves for $T$ is in $\pi_1(B^4 -  C)^{(n)}$, where $C$ is the core of $T$. \end{defn_2}

Each $\mathfrak{C}_n$ and $\mathfrak{C}_{2,\,n}$ is a subgroup of $\mathcal{C}$ with respect to the connected sum operation on knot concordance classes. 

\begin{defn_3}A knot $K$ is said to be in $\mathfrak{C}_n^+$ (resp. $\mathfrak{C}_n^-$) if it bounds a Casson tower of height $n$ such that the base-level kinks are all positive (resp. negative). \end{defn_3}

\begin{defn_4}A knot $K$ is said to be in $\mathfrak{C}_{2,\,n}^+$ (resp. $\mathfrak{C}_{2,\,n}^-$) if it bounds a Casson tower $T$ of height two such that the base-level kinks are all positive (resp. negative) and each member of a standard set of curves for $T$ is in $\pi_1(B^4 -  C)^{(n)}$, where $C$ is the core of $T$. \end{defn_4}

Each $\mathfrak{C}_n^\pm$  and $\mathfrak{C}_{2,\,n}^\pm$ is a monoid with respect to the connected sum operation on knot concordance classes. They are \textit{not} subgroups of $\mathcal{C}$, since if $K\in\mathfrak{C}_n^+$, $-K\in\mathfrak{C}_n^-$ but $-K$ may not be in $\mathfrak{C}_n^+$; and if $K\in\mathfrak{C}_{2,\,n}^+$, $-K\in\mathfrak{C}_{2,\,n}^-$ but $-K$ may not be in $\mathfrak{C}_{2,\,n}^+$. 

We will sometimes use the notation $\mathfrak{C}_n^{\pm}$ when referring to either of $\mathfrak{C}_n^+$ or $\mathfrak{C}_n^-$. Clearly, 
$$\cdots \mathfrak{C}_{n+1}^\pm \subseteq \mathfrak{C}_n^\pm \subseteq \cdots \subseteq \mathfrak{C}_1^\pm \subseteq \mathcal{C}$$
$$\cdots \mathfrak{C}_{n+1} \subseteq \mathfrak{C}_n \subseteq \cdots \subseteq \mathfrak{C}_1 \subseteq \mathcal{C}$$
and
$$\cdots \mathfrak{C}_{2,\,n+1}^\pm \subseteq \mathfrak{C}_{2,\,n}^\pm \subseteq \cdots \subseteq \mathfrak{C}_{2,\,1}^\pm \subseteq \mathfrak{C}_{2,\,0}^\pm \equiv \mathfrak{C}_2^\pm \subseteq \mathcal{C}$$
$$\cdots \mathfrak{C}_{2,\,n+1} \subseteq \mathfrak{C}_{2,\,n} \subseteq \cdots \subseteq \mathfrak{C}_{2,\,1} \subseteq \mathfrak{C}_{2,\,0} \equiv \mathfrak{C}_2 \subseteq \mathcal{C}$$

Studying the filtrations $\{\mathfrak{C}_{n}\}_{n=1}^\infty$ is unsatisfying in general since $\mathfrak{C}_5 = \mathfrak{C}_6 = \mathfrak{C}_7 = \cdots$. As we mentioned in the introduction, this is due to Freedman's Reimbedding Theorem \cite[Theorem 4.4]{Freed82} (later improved by Gompf--Singh in \cite[Theorem 5.1]{GomSin84}) which states that any Casson tower of height five contains within it arbitrarily high Casson towers sharing its initial three stages. In particular, this allows us to see that a Casson tower of height five contains a Casson handle within it. Along with Freedman's extraordinary theorem that any Casson handle is homeomorphic to an open 2--handle \cite[Theorem 1.1]{Freed82}, this implies that if a knot bounds a Casson tower $T$ of height five, it has a topological slice disk within $T$ itself. 

The question of whether a given Casson tower contains a topological slice disk for its attaching curve can be rephrased in terms of whether a certain iterated, ramified Whitehead double of the Hopf link is topologically slice in the 4--ball where all but one of the slice disks is standard. (This relationship can be easily seen using Kirby diagrams and is indicated in \cite[pp. 80--81]{Kir89}.) Using this connection it is easy to infer that not all Casson towers of height one or two contain topological disks. The simplest Casson towers of height three and four (i.e.\ with a single kink at each stage) contain topological slice disks for the attaching curve \cite{Freed88}, but this is not known for such towers in general. It appears to be widely believed by experts that all Casson towers of height three and higher contain topological slice disks for the attaching curve\footnote{The current literature is somewhat misleading on the status of this conjecture for general Casson towers of height three and four.}.

Let $\mathcal{T}$ denote the set of all topologically slice knots. The above shows that if a knot bounds a `tall enough' Casson tower (height five is sufficient, height three is conjectured to be enough), it is topologically slice. That is, $\mathfrak{C}_5\subseteq \mathcal{T}$. Indeed, a result of Quinn \cite[Proposition 2.2.4]{Q82}\cite[Theorem 5.2]{Gom05} shows that any topologically slice knot bounds a Casson handle in $B^4$. Therefore, $\mathfrak{C}_5$ is \textit{equal} to $\mathcal{T}$. If every Casson tower of height three contains a topological slice disk for its attaching curve, $\mathfrak{C}_3$ would be equal to $\mathcal{T}$. 

\subsection{Filtrations of the knot concordance group}We end this section by recalling the definitions of several filtrations of $\mathcal{C}$.

\begin{definition}[Definition 2.2 of \cite{CHHo13}]For any $n\geq 0$, a knot $K\subseteq S^3$ is in $\mathcal{P}_n$ (resp. $\mathcal{N}_n$) and is said to be $n$--positive (resp. $n$--negative) if there exists a smooth, compact, oriented 4--manifold $V$ such that there is a properly embedded, smooth 2--disk $\Delta\subseteq V$ with $\partial\Delta=K$, $\partial V= S^3$, $[\Delta]$ trivial in $H_2(V, S^3)$ and 
\begin{enumerate}
\item $\pi_1(V)=0$
\item the intersection form on $H_2(V)$ is positive definite (resp. negative definite)
\item $H_2(V)$ has a basis represented by a collection of surfaces $\{S_i\}$ disjointly embedded in the exterior of $\Delta$ such that $\pi_1(S_i)\subseteq \pi_1(V -  \Delta)^{(n)}$ for all $i$.
\end{enumerate}
\end{definition}

\begin{definition}[\cite{COT03}]For any $n\geq 0$, a knot $K\subseteq  S^3$ is in $\mathcal{F}_n$ and is said to be $n$--solvable if there exists a smooth, compact, oriented 4--manifold $V$ such that there is a properly embedded, smooth 2--disk $\Delta\subseteq V$ with $\partial\Delta=K$, $\partial V= S^3$, $[\Delta]$ trivial in $H_2(V, S^3)$ and 
\begin{enumerate}
\item $H_1(V)=0$
\item there exist surfaces $\{L_1, D_1, L_2, D_2, \cdots, L_k,D_k\}$ (with product neighborhoods) embedded in $V -  \Delta$ which form an ordered basis for $H_2(V)$ such that 
\begin{enumerate}
\item for each $i$, $L_i$ and $D_i$ intersect transversely and positively exactly once
\item $L_i\cap D_j$, $L_i\cap L_j$, and $D_i\cap D_j$ are each empty if $i\neq j$
\item $\pi_1(L_i)\subseteq \pi_1(V -  \Delta)^{(n)}$ for all $i$
\item $\pi_1(D_i)\subseteq \pi_1(V -  \Delta)^{(n)}$ for all $i$.
\end{enumerate}
\end{enumerate}
\end{definition}

\begin{remark} The above definition appears different from the original definition of $n$--solvability in \cite{COT03} at first glance, but the equivalence between the two definitions is straightforward and we refrain from including the proof here. (A proof for the equivalence between the corresponding definitions for the $n$--positive filtration can be found in \cite[Proposition 5.2]{CHHo13}.) 

The original definition of the $n$--solvable filtration in \cite{COT03} was concerned with the \textit{topological} knot concordance group. Here, as in several recent works in the literature, we are using a version of the filtration for the \textit{smooth} knot concordance group. \end{remark}

If the $D_i$ in the above definition are not required to have product neighborhoods, we get a slight enlargement of the $n$--solvable filtration, $\{\mathcal{F}_n^\text{odd}\}_{n=0}^\infty$.

\begin{definition}A \textit{grope} is a pair (2--complex, attaching circle). A grope of height one is a compact, oriented surface $\Sigma$ with a single boundary component, \textit{the attaching circle}. Gropes of greater height are defined recursively as follows. Let $\{\alpha_i,\beta_i : i=1, \cdots, g\}$ be disjointly embedded curves representing a symplectic basis for $H_1(\Sigma)$, where $\Sigma$ is a grope of height one. A grope of height $n$ is obtained by attaching gropes of height $n-1$ along its attaching circle to each $\alpha_i$ and $\beta_i$ in $\Sigma$. \end{definition}

\begin{remark}The above gropes are sometimes referred to as `symmetric' gropes and therefore, the following construction is sometimes referred to as the \textit{symmetric grope filtration}.\end{remark}

\begin{definition}[\cite{COT03}]For any $n\geq 1$, a knot $K\subseteq S^3$ is in $\mathcal{G}_n$ if $K$ extends to a proper embedding of a grope of height $n$ with its untwisted framing in $B^4$. This gives the \textit{grope filtration of} $\mathcal{C}$, $\{\mathcal{G}_n\}_{n=1}^\infty$.\end{definition}

\begin{definition}For any $n\geq 0$, a knot $K\subseteq S^3$ is in $\mathcal{G}_{2,\,n}$ if $K$ extends to a proper embedding of a grope $G$ of height two with its untwisted framing in $B^4$ such that pushoffs of each member of a symplectic basis for the first homology groups of the second stage surfaces of $G$ are in $\pi_1(B^4 -  G)^{(n)}$.\end{definition}

\begin{remark}The groups $\mathcal{G}_{2,\,n}$ defined above have not appeared in the literature before to the author's knowledge. However, several proofs of results related to the grope filtration hold for the filtration $\{\mathcal{G}_{2,\,n}\}_{n=0}^\infty$; this is perhaps unsurprising since it is easily seen that $\mathcal{G}_{n+2}\subseteq\mathcal{G}_{2,\,n}$ for each $n$. The following is an example of such a result. \end{remark}

\begin{theorem}[Theorem 8.11 of \cite{COT03}]\label{g2n}$\mathcal{G}_{n+2}\subseteq\mathcal{G}_{2,\,n}\subseteq\mathcal{F}_n$ for each $n$.\end{theorem}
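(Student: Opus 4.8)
The plan is to prove the two inclusions separately: $\mathcal{G}_{n+2}\subseteq\mathcal{G}_{2,n}$ is a routine manipulation of gropes, while $\mathcal{G}_{2,n}\subseteq\mathcal{F}_n$ is the surgery construction underlying \cite[Theorem 8.11]{COT03}, which in fact proves this slightly stronger statement.

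\emph{First inclusion.} Suppose $K$ bounds an embedded grope $\widehat{G}$ of height $n+2$ in $B^4$ with its untwisted framing, and let $G\subseteq\widehat{G}$ be the subgrope consisting of the first two stages of $\widehat{G}$; this is an embedded height-two grope with boundary $K$ and untwisted framing. Given any member $c$ of a symplectic basis for the first homology of a second-stage surface of $G$, the part of $\widehat{G}$ attached along $c$ is an embedded grope of height $n$ (vacuously so when $n=0$), disjoint from $G$ except along $c$; pushing $c$ off the second-stage surface by the untwisted framing yields a parallel curve $c'\subseteq B^4\setminus G$ bounding a disjoint parallel copy of that height-$n$ grope. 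It then suffices to record the elementary fact that \emph{if a loop $\gamma$ bounds an embedded grope of height $m\ge 1$ in a space $X$, then $\gamma\in\pi_1(X)^{(m)}$}: this follows by induction on $m$, since the boundary of the bottom surface of such a grope is a product of commutators of a symplectic basis of that surface, and for $m\ge 2$ each of those basis curves bounds a height-$(m-1)$ grope in $X$, hence lies in $\pi_1(X)^{(m-1)}$ by the inductive hypothesis, so the product of commutators lies in $\pi_1(X)^{(m)}$. Applying this with $X=B^4\setminus G$ gives $c'\in\pi_1(B^4\setminus G)^{(n)}$ for every such $c$, which is precisely the requirement for $K\in\mathcal{G}_{2,n}$.

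\emph{Second inclusion.} Let $K$ bound an embedded height-two grope $G$ in $B^4$ with untwisted framing, bottom surface $\Sigma$ of genus $g$ with symplectic basis $\{\alpha_i,\beta_i\}_{i=1}^g$ for $H_1(\Sigma)$, and disjointly embedded second-stage surfaces including $F_1,\dots,F_g$ with $\partial F_i=\alpha_i$, such that pushoffs of a symplectic basis for each $H_1(F_i)$ lie in $\pi_1(B^4\setminus G)^{(n)}$. Push each $\alpha_i$ off $\Sigma$ by the untwisted framing to disjoint null-homotopic circles $\alpha_i'\subseteq B^4\setminus\Sigma$, each bounding a parallel copy $F_i'$ of $F_i$ in $B^4\setminus G$. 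Let $V$ be obtained from $B^4$ by surgering along $\alpha_1',\dots,\alpha_g'$ with the framings inherited from the grope, so $\partial V=S^3$, $V$ is simply connected, and $H_2(V)\cong\mathbb{Z}^{2g}$ has a basis $\{L_i,D_i\}$ where $D_i$ is the belt $2$-sphere of the $i$-th surgery and $L_i=F_i'\cup d_i$ with $d_i$ a core disk of the $i$-th surgery handle; one checks $L_i\cdot D_i=\pm1$, $L_i\cdot D_j=D_i\cdot D_j=0$ for $i\ne j$, and $L_i\cdot L_i=0$ (the framing being untwisted), so this basis realises a hyperbolic intersection form geometrically. Since $\Sigma$ is disjoint from the $\alpha_i'$ it persists into $V$, where each $\alpha_i$ bounds an embedded disk parallel to $d_i$, disjoint from $\Sigma$ apart from $\alpha_i$; compressing $\Sigma$ along $\alpha_1,\dots,\alpha_g$ converts it into a properly embedded disk $\Delta$ with $\partial\Delta=K$. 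Each $D_i$ is a sphere, so $\pi_1(D_i)=1\subseteq\pi_1(V\setminus\Delta)^{(n)}$ automatically; and $\pi_1(L_i)$ is carried by the pushed-off symplectic basis of $F_i$, which lies in $\pi_1(B^4\setminus G)^{(n)}$ by hypothesis and hence, via the homomorphism $\pi_1(B^4\setminus G)\to\pi_1(V\setminus\Delta)$ induced by inclusion (the surfaces $F_i'$ being retained in $V$), in $\pi_1(V\setminus\Delta)^{(n)}$. Finally $H_1(V)=0$, the form on $H_2(V)$ is unimodular, and $[\Delta]$ pairs trivially with $\{L_i,D_i\}$, so $[\Delta]=0$ in $H_2(V,S^3)$; thus $V$ and $\Delta$ exhibit $K$ as $n$-solvable.

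\emph{Main obstacle.} The essential difficulty in the second inclusion is geometric rather than homological: the surfaces $L_i,D_i$ must be made to lie \emph{literally} in $V\setminus\Delta$, not merely to be algebraically unlinked from $\Delta$. The pushed-off second-stage surfaces $F_i'$ and the core disks can be kept disjoint from $\Delta$ by a careful choice of pushoffs, but $\Delta$, being built by compressing $\Sigma$ along the $\alpha_i$, necessarily meets each belt sphere $D_i$ in a cancelling pair of points, which must be removed by a Whitney/Norman-type move; this is available because $V$ is simply connected and the $D_i$ are embedded $2$-spheres of square zero, but one must carry it out without creating new intersections among the basis surfaces. Tracking the untwisted framings through the construction — so that all normal bundles in sight are trivial, $\Delta$ is an honest embedded disk with $[\Delta]=0$, and the form on $H_2(V)$ is exactly hyperbolic — is the remaining bookkeeping. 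This is precisely the work done in the surgery proof of \cite[Theorem 8.11]{COT03}.
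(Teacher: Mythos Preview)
Your proof is correct and follows the same approach as the paper. For the first inclusion, both you and the paper observe that a curve bounding a height-$n$ grope in a space lies in the $n$-th term of the derived series of its fundamental group. For the second inclusion, the paper simply records that the proof of \cite[Theorem~8.11]{COT03} uses only the first two stages of the grope together with the condition that the second-stage symplectic basis curves lie in $\pi_1(B^4\setminus G)^{(n)}$---i.e.\ exactly the $\mathcal{G}_{2,n}$ hypothesis---whereas you go further and sketch the underlying surgery construction and correctly flag the disjointness of $\Delta$ from the $D_i$ as the one nontrivial step; this is a difference in level of detail, not in strategy.
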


\begin{proof}Suppose a knot $K$ bounds a grope $G$ in $B^4$. If a curve on the second stage surfaces bounds a grope of height $n$ away from the first first two stages (call it $G'$), the curve lies in $\pi_1(B^4-G')^{(n)}$; as a result the first inclusion is clear. 

The second inclusion follows very easily from a close reading of the proof of \cite[Theorem 8.11]{COT03} (Theorem 1) where they show that $\mathcal{G}_{n+2}\subseteq \mathcal{F}_n$. Briefly, given a grope $G$ of height $n+2$ bounded by a knot, they only use the first two stages (call it $G'$) and the fact that a symplectic basis for $H_1$ of each second stage surface is in $\pi_1(B^4- G')^{(n)}$.\end{proof}

\section{Casson towers and various filtrations of the smooth knot concordance group}\label{tower_results}

In this section we prove several results connecting the types of Casson towers bounded by a knot $K$ and membership within the many filtrations of $\mathcal{C}$. Together these results comprise Theorem A. 
 
\begin{proposition}\label{towersandgropes}The attaching curve of a Casson tower $T$ of height $n$ bounds a properly embedded grope of height $n$ within $T$.\end{proposition}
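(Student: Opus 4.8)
The plan is to proceed by induction on the height $n$ of the Casson tower $T$, with the base case $n=1$ being the crucial geometric construction: exhibiting a genus-one embedded surface inside a single kinky handle whose symplectic basis curves lie on the standard set of curves. For the base case, recall that a kinky handle with $k$ kinks is diffeomorphic to $\natural_k S^1\times D^3$, with a Kirby diagram consisting of $k$ dotted circles and an attaching curve $\gamma$ passing through each dotted circle and clasping itself. I would show directly from such a diagram that $\gamma$ bounds an embedded surface $\Sigma$ of genus $k$ inside the kinky handle $\kappa$: the attaching curve of a kinky disk with $k$ self-intersections bounds a disk with $k$ pairs of self-plumbings, which can be resolved into an embedded genus-$k$ surface (this is the standard ``pushing off the kinks into handles'' picture). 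The key point is to arrange that, for each kink, one of the two symplectic basis curves $\alpha_i, \beta_i$ of $\Sigma$ is isotopic in $\kappa$ to the corresponding member of the standard set of curves (the $0$-framed meridian of the $i$-th dotted circle), while the dual curve $\beta_i$ can be taken to be a small meridian that bounds a disk in $\kappa$ — or more symmetrically, one can take $\Sigma$ so that both families lie on the standard curves, using the two passes of $\gamma$ through the $i$-th $1$-handle. I also need to check the framing: the untwisted (canonical) framing of $\gamma$ is precisely the one for which parallel copies bound disjoint surfaces in $\kappa$, which is exactly the condition in the definition of $\mathcal{G}_n$; this is the framing remark already made in the excerpt just before Figure \ref{height2tower}.

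For the inductive step, suppose the claim holds for towers of height $n-1$. Given a Casson tower $T$ of height $n$, decompose it as the first-stage kinky handle $\kappa$ together with the second-and-higher stages: attaching a member of the standard set of curves of $\kappa$, say $c_i$, is the attaching curve of a Casson sub-tower $T_i$ of height $n-1$ (the union of all kinky handles of stage $\geq 2$ sitting above $c_i$). By the base case, $\gamma = f(\partial D)$ bounds an embedded genus-$k$ surface $\Sigma$ in $\kappa$ whose symplectic basis curves are (isotopic to) the $c_i$ and their duals. By the inductive hypothesis, each $c_i$ bounds an embedded grope $G_i$ of height $n-1$ inside $T_i$, attached with its untwisted framing; the dual curves can be capped by the small disks they bound in $\kappa$. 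Gluing $\Sigma$ to the gropes $G_i$ along the curves $c_i$ — and capping the dual curves with disks, which we may regard as (trivial) height-$(n-1)$ gropes — produces a grope of height $n$ with attaching circle $\gamma$. Disjointness of the pieces is automatic because the $T_i$ are disjoint sub-towers (they are attached along disjointly embedded curves) and the grope $G_i$ lives inside $T_i$, hence away from $\kappa$ except along $c_i$; the framing compatibility propagates since at each stage we use the canonical/untwisted framing.

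The main obstacle I anticipate is the base case bookkeeping: verifying carefully, via the Kirby diagram, that the genus-$k$ surface $\Sigma \subset \kappa$ really can be chosen with one symplectic basis curve of each handle running parallel to a standard-set curve $c_i$ and with the correct (untwisted) framing, and simultaneously that the dual curves bound disjoint embedded disks in $\kappa$ disjoint from the $c_i$. A secondary point requiring care is the precise definition of ``grope of height $n$'': the recursive definition attaches height-$(n-1)$ gropes to \emph{both} $\alpha_i$ and $\beta_i$ of a height-one surface, so I must make sure that capping the dual curves with disks is legitimate — a disk is a grope of height one (indeed of any height, after stabilizing), so attaching a disk in place of a taller grope only \emph{helps}, and the surface-plus-gropes object still satisfies the definition of a height-$n$ grope. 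Once these local models are pinned down, the induction itself is formal, and the framing statement follows from the characterization of the canonical framing recalled in Section \ref{defns}.
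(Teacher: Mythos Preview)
Your inductive scheme has a genuine gap at the point where you cap the symplectic basis of the first-stage surface. Your option (1) --- that the dual curve $\beta_i$ bounds an embedded disk in the kinky handle $\kappa$ --- is false: if it did, surgering $\Sigma$ along those disks would produce an \emph{embedded} disk bounded by the attaching curve inside $\kappa$, contradicting the fact that a kinky handle is not a standard $2$--handle. Your option (2) is the correct picture: both $\alpha_i$ and $\beta_i$ are isotopic (via disjoint annuli) to pushoffs of the \emph{same} standard curve $c_i$. But then your inductive step needs \emph{two} disjoint height-$(n-1)$ gropes inside the single subtower $T_i$ above $c_i$, and your inductive hypothesis only supplies one. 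Iterating, at stage $j$ you need $2^{\,j-1}$ disjoint parallel gropes in each subtower, and these nested copies will intersect the lower-stage surfaces in circles that must be removed.

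This doubling-and-nesting is precisely what the paper's explicit construction handles: at each stage one takes parallel pushoffs of the $2$--handle cores (using the $0$--framing to keep them disjoint from each other), tubes along the next dotted circle, and then pushes the resulting circle intersections with the previous-stage surfaces off into the $4$--ball. If you want to keep your inductive framework, you must strengthen the hypothesis to: \emph{any finite number of $0$--framed parallel copies of the attaching curve bound disjoint embedded height-$n$ gropes in $T$}, and in the inductive step you must verify that the required annuli from the nested $\alpha_i,\beta_i$ to pushoffs of $c_i$ can be made disjoint from the earlier surfaces (the paper's Step~4). Without this strengthening and the intersection-resolution argument, the induction does not close.
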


\begin{figure}[t!]
  \begin{center}
  \includegraphics[width=5in]{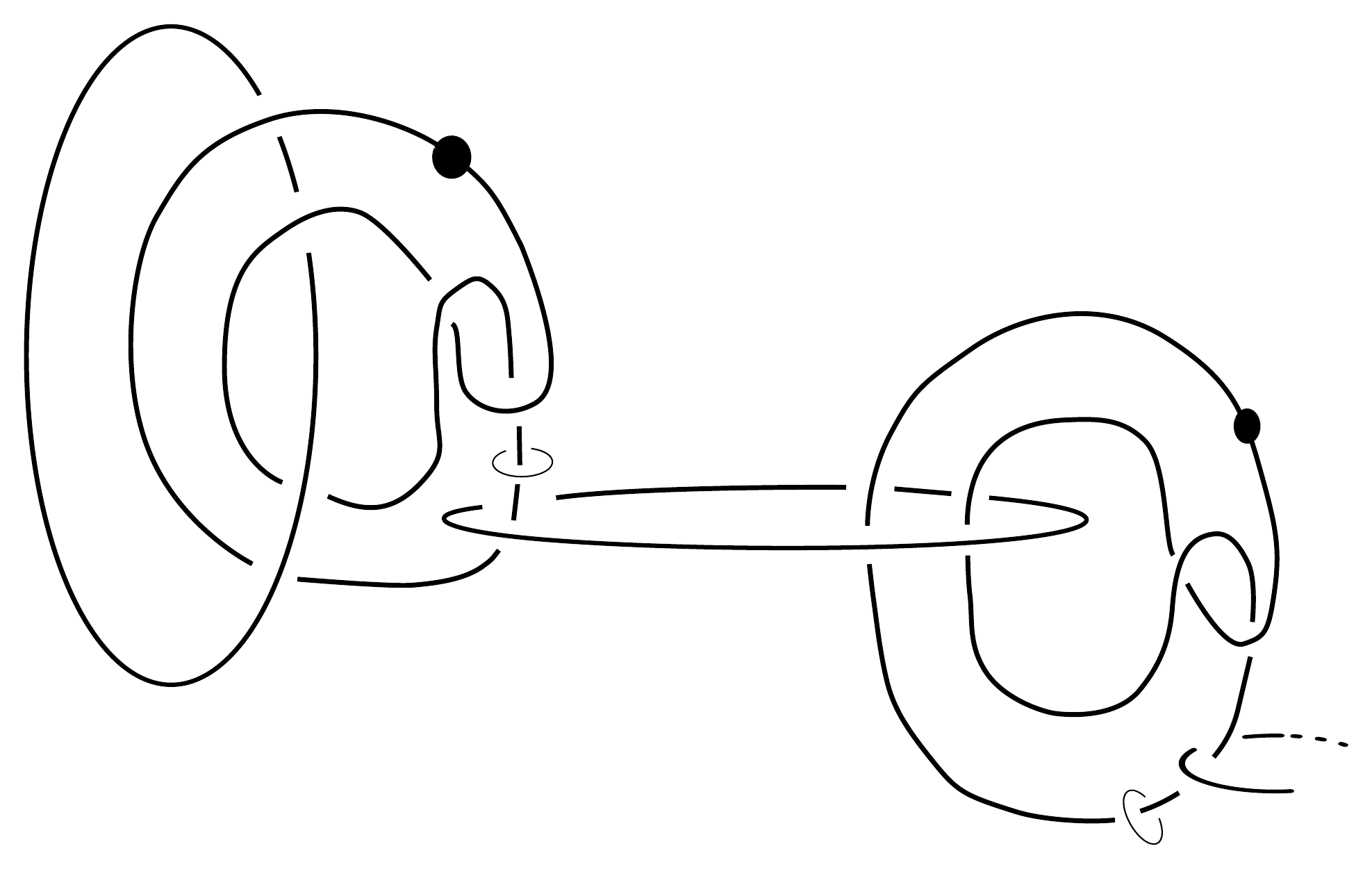}
  \put(-2.5,1.45){0}
  \put(-0.4,0.05){0}   
  \put(-2.95,1.45){$\alpha_1$}
  \put(-0.75,-0.05){$\alpha_2$}
  \caption{Proof of Proposition \ref{towersandgropes}: A Kirby diagram for the first two stages of a Casson tower with a single kink at each stage.}\label{towersandgropesfig1}
  \end{center}
\end{figure}

\begin{figure}[t!]
  \centering
  \includegraphics[width=5in]{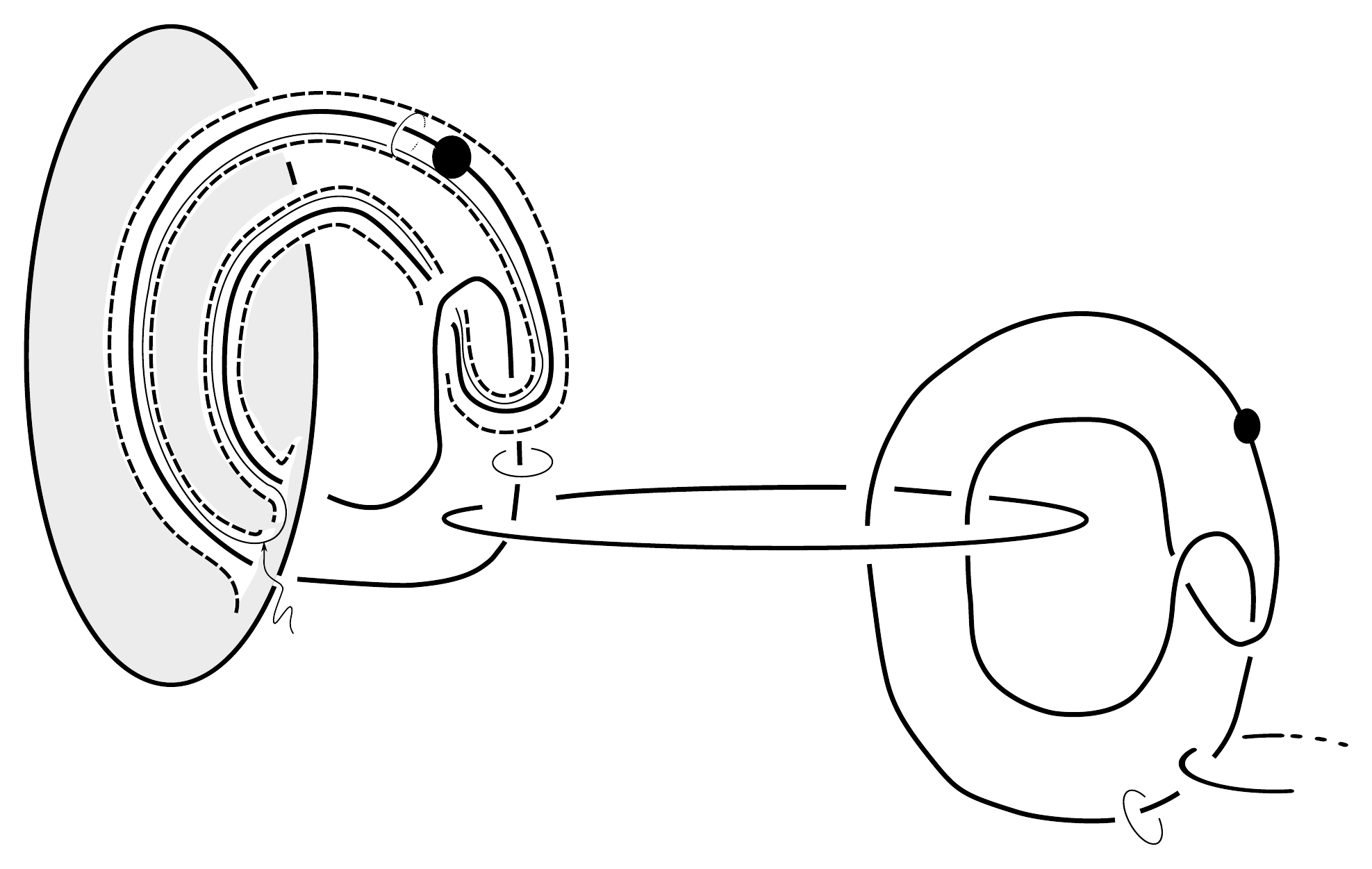}
  \put(-2.5,1.45){0}
  \put(-0.4,0.05){0}   
  \put(-2.95,1.45){$\alpha_1$}
  \put(-0.75,-0.05){$\alpha_2$}
  \put(-3.5,2.8){$m$}
  \put(-3.93,0.75){$\ell$}  
  \caption{Proof of Proposition \ref{towersandgropes}, Step 1: $\Sigma$, the first stage of the grope, consists of the standard disk bounded by the attaching curve with a tube (dashed) along the dotted circle. $m$ and $\ell$ denote the meridian and longitude respectively.}\label{towersandgropesfig2}
\end{figure}

\begin{figure}[ht!]
  \begin{center}
  \includegraphics[width=5in]{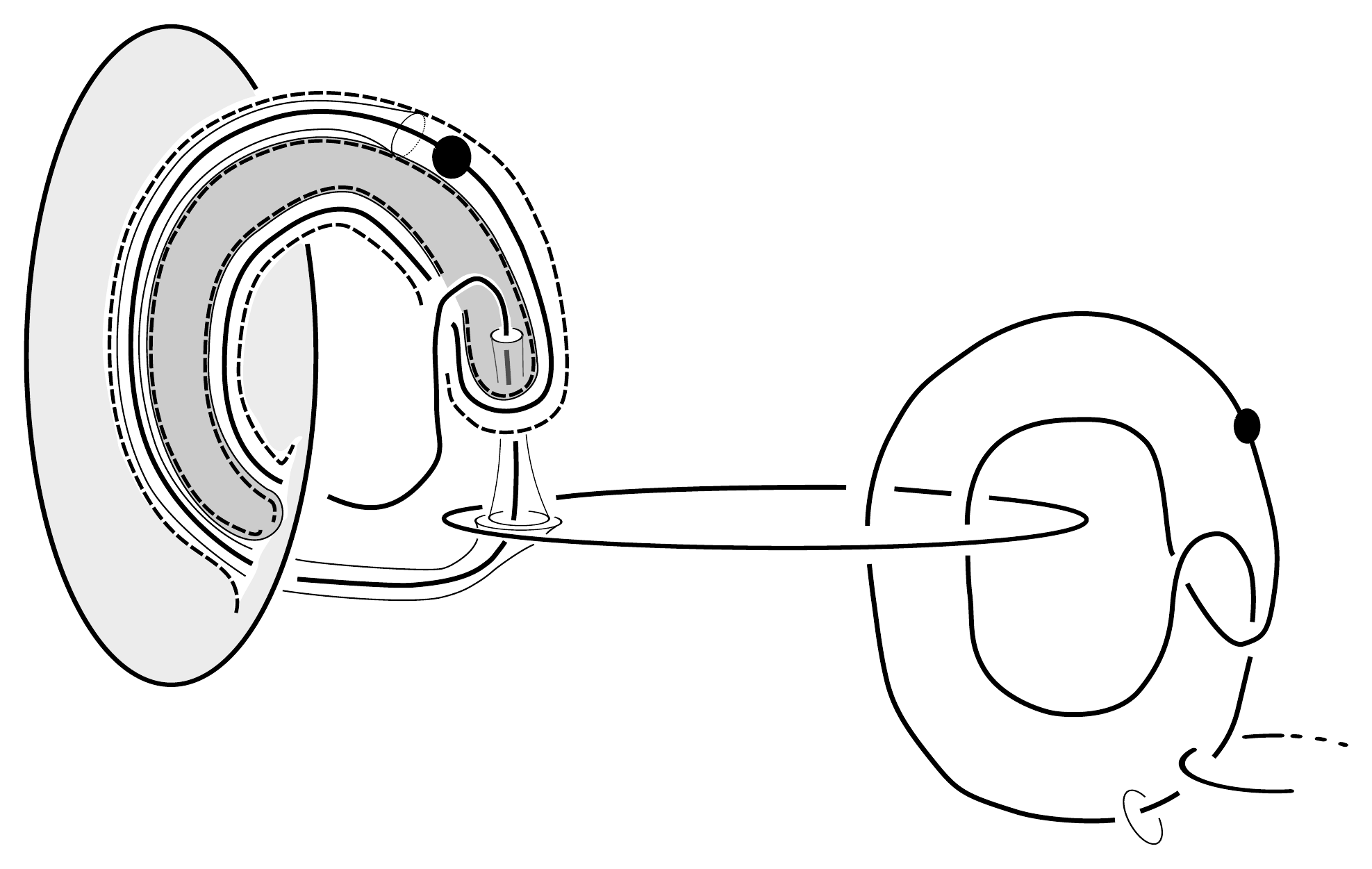}
  \put(-2.5,1.45){0}
  \put(-0.4,0.05){0}   
  \put(-0.75,-0.05){$\alpha_2$}
  \caption{Proof of Proposition \ref{towersandgropes}, Step 2: Surfaces connecting the meridian and longitude of $\Sigma$ to pushoffs of the meridian of the first-level dotted circle.}\label{towersandgropesfig3}
  \end{center}
\end{figure}

\begin{figure}[ht!]
  \begin{center}
  \includegraphics[width=5in]{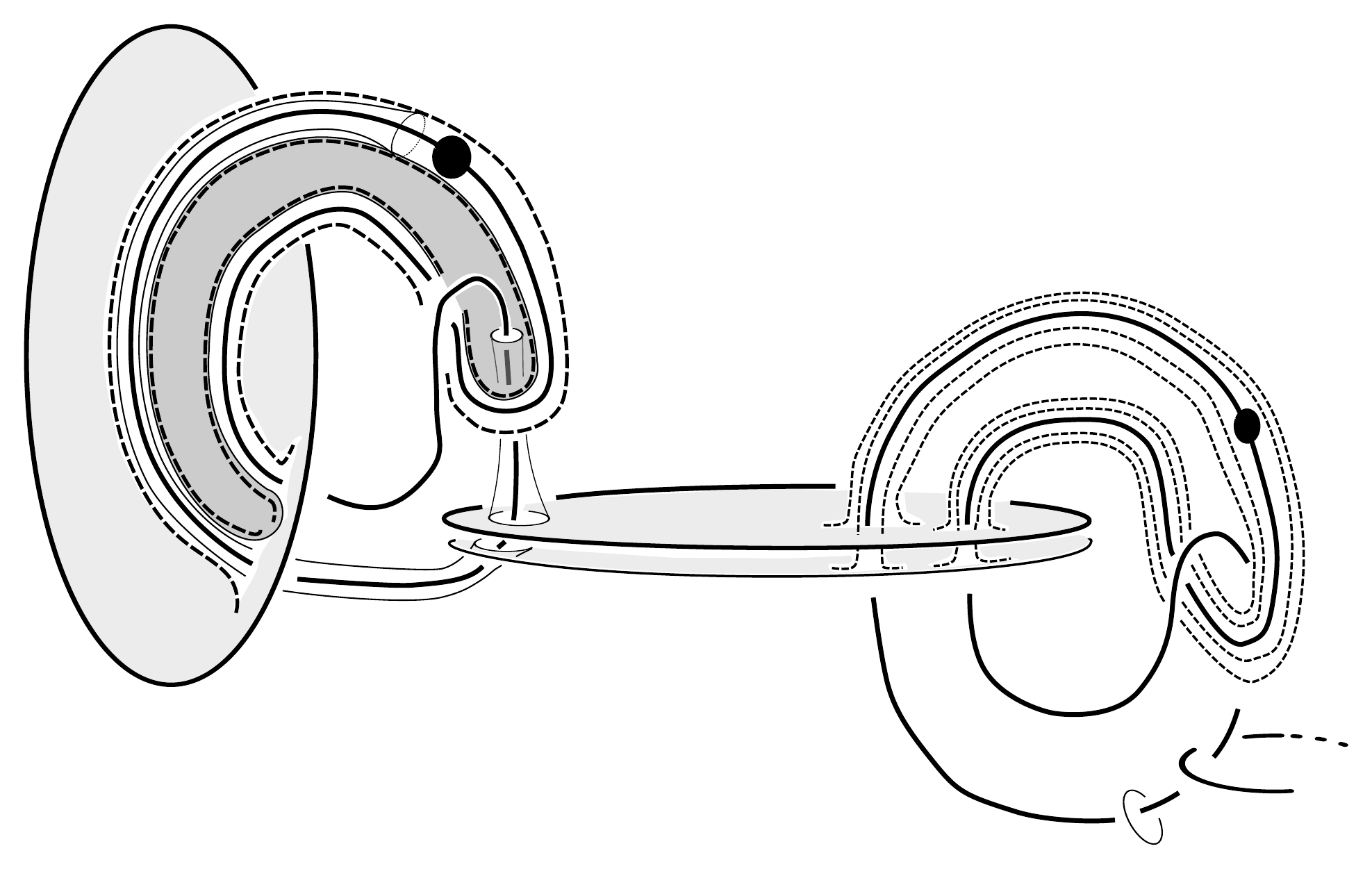}
  \put(-2.5,1.45){0}
  \put(-0.4,0.05){0}   
  \put(-0.65,-0.05){$\alpha_2$} 
  \caption{Proof of Proposition \ref{towersandgropes}, Step 3: The second stage surfaces of the grope use the annuli constructed previously (in Figure \ref{towersandgropesfig3}) and the 0--framed 2--handle attached to the meridian of the first stage dotted circle. We use two copies of the core of the attached 2--handle in addition to the standard disk shaded gray in the picture, with a tube about the second stage dotted circle. Notice that two copies of the tubes are needed and they are nested.}\label{towersandgropesfig4}
  \end{center}
\end{figure}

\begin{proof}A simple case is pictured in Figure \ref{towersandgropesfig1}, showing a neighborhood of the first two stages of a Casson tower with a single kink in each stage. We will directly and explicitly construct a grope bounded by the attaching curve (the leftmost (undecorated) curve) in Figure \ref{towersandgropesfig1}; after completing the proof in the simple case, we will outline the proof in the general case. This is partly to avoid drowning the reader in a sea of subscripts and because passing to the general case will not be particularly onerous. For clarity, we break up the proof into steps. 

\vspace{5pt}\noindent \textbf{Step 1}: The first stage of the grope, $\Sigma$, bounded by the attaching curve in the simple example, is shown in Figure \ref{towersandgropesfig2}. It  consists of the standard disk bounded by the attaching curve with a tube (dashed) along the dotted circle corresponding to the single kink in the first-stage kinky handle. 

\vspace{5pt}\noindent \textbf{Step 2}: It is easy to see, abstractly, that both the meridian $m$ and the longitude $\ell$ of $\Sigma$ are homotopic to $\alpha_1$, the meridian of the dotted circle. We easily tube `inside $\Sigma$' from $m$ to $\alpha_1$, as shown in Figure \ref{towersandgropesfig3}. We also see an embedded annulus, shown in Figure \ref{towersandgropesfig3}, cobounded by $\ell$ and a pushoff of $\alpha_1$. These two annuli intersect exactly once (as desired) at the point of intersection of $m$ and $\ell$. 

\vspace{5pt}\noindent \textbf{Step 3}: The curve $\alpha_1$ and a pushoff of $\alpha_1$ bound disjoint surfaces in the complement of $\Sigma$ and the annuli from Step 2, as follows. Each surface consists of the core (or a pushoff of the core) of the attached 0--framed 2--handle tubed along the next dotted circle, as shown in Figure \ref{towersandgropesfig4}. Since the 2--handle is attached with 0--framing, the pushoffs do not intersect. These surfaces, along with the annuli between $m$ and $\alpha_1$, and $\ell$ and $\alpha_1$, form the second stage of our grope. Note that each of the two second-stage surfaces has genus one. Call these surfaces $\Sigma_1$ and $\Sigma_2$. 

\begin{figure}[ht!]
  \begin{center}
  \includegraphics[width=3in]{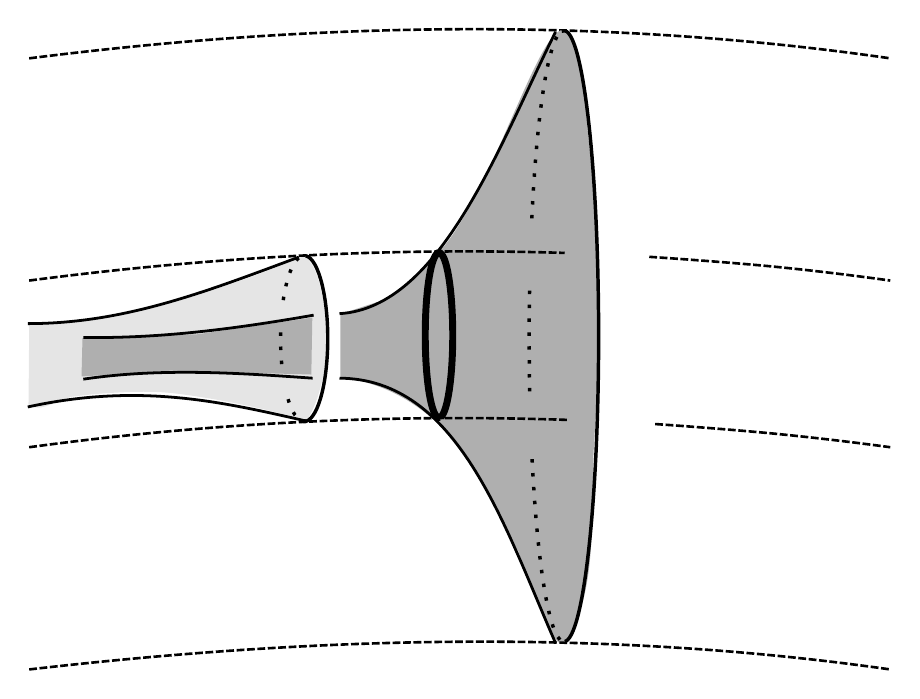}
  \put(-2.1,1.5){$m_1$}
   \put(-1.2,0.05){$m_2$} 
  \put(-0.4,0.7){$\Sigma_1$}
  \put(-0.4,0.0){$\Sigma_2$}   
  \caption{Proof of Proposition \ref{towersandgropes}, Step 4: The meridians $m_1$ and $m_2$ of the second-stage surfaces $\Sigma_1$ and $\Sigma_2$ cobound disjoint annuli $A_1$ and $A_2$ (shown in two shades of grey) with pushoffs of $\alpha_2$; however, $A_2$ intersects $\Sigma_1$ in a circle, shown in bold. We can resolve this by pushing in a neighborhood of the bolded circle into the 4--ball.}\label{towersandgropesfig5}
  \end{center}
\end{figure}

\vspace{5pt}\noindent \textbf{Step 4}: Constructing the third stage surfaces of the grope will indicate how to proceed in subsequent stages. Unlike before, we now have two sets of meridians and longitudes which are each abstractly homotopic to the meridian of the second dotted circle, $\alpha_2$. We will construct disjoint annuli cobounded by these curves and pushoffs of $\alpha_2$, away from the surfaces of the first two stages. If we proceed as we did in Step 2 we do obtain annuli that are disjoint from each other, but since the second stage surfaces are `nested', two of the four annuli intersect the second stage surfaces. However, these intersections are particularly nice---they are boundary-parallel circles in the annuli. We can push these intersections into the 4--ball to get disjoint annuli. (Here is a good toy analogy. Consider two nested, standard, unknotted tori in $S^3$. Any meridional disk of the outer torus will intersect the inner torus in a circle, but we can push the disk into the 4--ball in a neighborhood of the circle to get a meridional disk for the outer torus which is disjoint from the inner torus and still `mostly' in $S^3$.) Figure \ref{towersandgropesfig5} shows the case for the meridians of the nested tori. Let $m_2$ be the meridian of the outer surface $\Sigma_2$ and $m_1$ the meridian of the inner surface $\Sigma_1$. The tubes shown in two shades of gray in Figure \ref{towersandgropesfig5} are analogous to the tube between $m$ and $\alpha_1$ in Step 2 (Figure \ref{towersandgropesfig3}). The bolded circle (which is a meridian of $\Sigma_1$) is the intersection we need to resolve; we do so by pushing a neighborhood of it into the 4--ball. 

By this pushing in process, we obtain four embedded annuli as needed; each such annulus has a pushoff of $\alpha_2$ as one of its boundary components. Since $\alpha_2$ and its pushoffs bound disjoint surfaces as in Step 3, we can finish constructing the third stage surfaces as before. Again, note that each of the four third-stage surfaces has genus one. 

\vspace{5pt}\noindent \textbf{Step 5}: To construct the higher-stage surfaces of the grope, we essentially repeat Step 4, as follows. At the end of Step 4, we obtained four nested third-stage surfaces. As a result, to raise the grope height to four we need to find disjoint surfaces bounded by four meridian--longitude pairs. Construct annuli as before which are disjoint from one another but intersect the third-stage surfaces; these intersections are the same type as in Step 3, and we eliminate them by pushing into the 4--ball. Each such annulus has a push off of the meridian of the next dotted circle (not pictured) as one of its boundary components. These pushoffs bound disjoint surfaces as in Step 3, and therefore, we obtain eight fourth-stage surfaces, each of genus one. To construct the $n^\text{th}$--stage surfaces, we start with $2^{n-2}$ meridian--longitude pairs, and we proceed as in Step 4 to construct $2^{n-1}$ $n$--stage surfaces. by resolving $2^{n-1}(2^{n-1}-1)$ intersections by pushing into the 4--ball. 

It is easy to see, since most of the grope is in 3--dimensional space, that the attaching curve bounds this grope with untwisted framing. This finishes the proof in the simple case pictured in Figure \ref{towersandgropesfig1}

\vspace{5pt} Now we address the general case of a more complicated Casson tower. (The reader might refer to Figure \ref{height2tower} to recall the general picture). As in Figure \ref{towersandgropesfig1}, the attaching curve will be unknotted; however, the number of dotted circles linking with the attaching curve will be equal to the number of kinks in the first-stage kinky handle (of course, each pairwise linking number is zero---each dotted circle forms the Whitehead link with the attaching curve). To obtain the first stage surface of the promised grope, as we did in Step 1 above we take the standard disk bounded by the unknotted attaching curve and tube along each of the dotted circles; the resulting surface has genus equal to the number of kinks in the first-stage kinky handle. Now we must build the subsequent stages of the grope. Note that each member of a meridian--longitude pair in the first-stage surface was obtained from a specific dotted circle for the first-stage kinky handle, and as in the simple case already proved we can find an embedded annulus from each curve to the meridian of the corresponding dotted circle. Following along with the proof of the simple case, we need to find disjoint surfaces bounded by meridians of the dotted circles and their pushoffs away from the previous stage. The only change we have to make to our strategy from before is that we tube along multiple dotted circles instead of just one; however, we can do this since the dotted circles do not interact with one another. We can then continue to build all the subsequent stages using the same strategy as in the proof of the simple case. Therefore, the genera of the later-stage surfaces are equal to the number of kinks in the corresponding kinky handle in the Casson tower. (Note however that we lose the information about the signs of the kinks when going from a Casson tower to a grope.) \end{proof}
 
The following corollary is immediate. 

\begin{corollary}For each $n\geq 1$, $\mathfrak{C}_{n} \subseteq \mathcal{G}_{n}$. \end{corollary}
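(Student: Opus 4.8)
The plan is to deduce this directly from Proposition \ref{towersandgropes}. Suppose $K\in\mathfrak{C}_n$. By definition, $K$ bounds a Casson tower $T$ of height $n$ in $B^4$; unwinding the convention for ``bounds a Casson tower'', this means that $T$ is properly embedded in $B^4$ so that a $0$--framed neighborhood of its attaching curve is identified with a $0$--framed neighborhood of $K$ in $S^3=\partial B^4$. The first step is to apply Proposition \ref{towersandgropes} to obtain a properly embedded grope $G$ of height $n$ inside $T$, and hence inside $B^4$, whose attaching circle is the attaching curve of $T$ and therefore is (isotopic to) $K$.

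The only point requiring more than invoking Definition 1 and Proposition \ref{towersandgropes} is the framing condition in the definition of $\mathcal{G}_n$: one must verify that $K$ extends to a proper embedding of $G$ \emph{with its untwisted framing}. But this is exactly the remark recorded at the end of the proof of Proposition \ref{towersandgropes} --- the grope produced there lies, apart from a collar, in a copy of $S^3$ (a collar of $\partial B^4$), so the normal framing of its attaching circle induced by $G$ agrees with the $0$--framing of $K$ in $S^3$. Thus I would simply cite that observation to conclude that $K$ extends to a proper embedding of a height $n$ grope with untwisted framing in $B^4$, i.e.\ $K\in\mathcal{G}_n$. As $K$ was arbitrary, $\mathfrak{C}_n\subseteq\mathcal{G}_n$.

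I do not expect a genuine obstacle: the entire content of the corollary is contained in Proposition \ref{towersandgropes}, and the only item beyond a one-line citation is reconciling the two framing conventions (the ``$0$--framed'' identification used to say a knot bounds a Casson tower, versus the ``untwisted framing'' in the definition of the grope filtration), which the proof of the proposition already handles.
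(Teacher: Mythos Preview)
Your proposal is correct and matches the paper's approach exactly: the paper states that the corollary is immediate from Proposition~\ref{towersandgropes}, and your write-up simply unpacks this, including the framing remark already noted at the end of that proposition's proof.
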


\begin{corollary}\label{gropeintersection}Let $\mathcal{T}$ denote the set of all topologically slice knots. Then, $$\mathcal{T}\subseteq\bigcap_{n=1}^\infty\mathcal{G}_n.$$ \end{corollary}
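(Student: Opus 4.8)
The plan is to deduce Corollary \ref{gropeintersection} from the already-established fact that $\mathfrak{C}_5=\mathcal{T}$ together with Proposition \ref{towersandgropes} (and its immediate corollary $\mathfrak{C}_n\subseteq\mathcal{G}_n$). The key observation is that each $\mathcal{G}_n$ is defined so that $\mathcal{G}_{n+1}\subseteq\mathcal{G}_n$ --- restricting a height-$(n+1)$ grope to its bottom $n$ stages exhibits the knot as bounding a grope of height $n$, so the grope filtration is nested. Hence $\bigcap_{n\geq1}\mathcal{G}_n$ makes sense as a decreasing intersection, and it suffices to show $\mathcal{T}\subseteq\mathcal{G}_n$ for every fixed $n$.

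First I would recall, as noted in Section \ref{defns}, that by Quinn's theorem any topologically slice knot bounds a Casson handle in $B^4$; in particular it bounds a Casson tower of arbitrarily large (in fact infinite) height, so $\mathcal{T}\subseteq\mathfrak{C}_N$ for every $N$. (Alternatively, and more elementarily, one invokes the Freedman--Gompf--Singh reimbedding machinery only as far as needed: $\mathfrak{C}_5=\mathcal{T}$, so a topologically slice knot lies in $\mathfrak{C}_5$, and a height-five Casson tower contains arbitrarily tall Casson towers sharing its first three stages, hence the knot lies in $\mathfrak{C}_N$ for all $N$.) Next, fix $n\geq1$ and apply Proposition \ref{towersandgropes} with a Casson tower of height $n$: the attaching curve --- here the knot $K$ --- bounds a properly embedded grope of height $n$ inside the tower, and the proof of the Proposition shows this grope is bounded with its untwisted framing. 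Therefore $K\in\mathcal{G}_n$. Since $n$ was arbitrary, $K\in\bigcap_{n=1}^\infty\mathcal{G}_n$, which gives the claimed inclusion $\mathcal{T}\subseteq\bigcap_{n=1}^\infty\mathcal{G}_n$.

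There is essentially no hard step here: the content is entirely in Proposition \ref{towersandgropes} (already proved) and in the input $\mathcal{T}\subseteq\mathfrak{C}_N$ for all $N$, which is cited from the literature. The only point requiring a sentence of care is the framing: one must confirm that the grope produced by Proposition \ref{towersandgropes} is attached with the \emph{untwisted} framing, so that membership in $\mathcal{G}_n$ as defined in Section \ref{defns} genuinely holds --- but this is exactly what the last paragraph of that proof records, since most of the grope sits in a copy of $S^3$. So the proof is just a two-line assembly: $\mathcal{T}\subseteq\mathfrak{C}_n\subseteq\mathcal{G}_n$ for every $n$, hence $\mathcal{T}\subseteq\bigcap_n\mathcal{G}_n$.
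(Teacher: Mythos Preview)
Your proof is correct and follows essentially the same route as the paper: invoke Quinn's result that a topologically slice knot bounds a Casson handle (hence Casson towers of every height) in $B^4$, and then apply Proposition~\ref{towersandgropes} to obtain an embedded grope of each height $n$. The paper's proof is simply the one-line version of what you wrote; your additional remarks about the nesting $\mathcal{G}_{n+1}\subseteq\mathcal{G}_n$, the alternative via reimbedding, and the untwisted framing are all fine but not needed beyond what the paper already records.
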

\begin{proof}This follows immediately from Proposition \ref{towersandgropes} and Quinn's result that any topological slice disk for a topologically slice knot contains a Casson handle \cite[Proposition 2.2.4]{Q82}\cite[Theorem 5.2]{Gom05}.\end{proof}

The above was previously known (without using Casson handles). Briefly, a topological slice disk for a knot $K$ is a topologically embedded locally flat grope of arbitrary height. Such a grope can be deformed to yield a smooth grope of arbitrary height (some more detail may be found in \cite[Remark 2.19]{Cha14}).

It is easy to see that $\mathfrak{C}_{n+2}\subseteq \mathfrak{C}_{2,\,n}$ and $\mathfrak{C}_{n+2}^\pm\subseteq \mathfrak{C}_{2,\,n}^\pm$ for all $n\geq 0$. This is because each member of a standard set of curves for the second stage of a Casson tower of height $n+2$ bounds a Casson tower of height $n$ away from the first two stages. Therefore, by Proposition \ref{towersandgropes}, each such curve bounds a grope of height $n$ away from the first two stages. In fact, a much stronger result is known, as we see below.  

\begin{corollary}\label{height3again}$\mathfrak{C}_3\subseteq \mathfrak{C}_{2,\,n}$ for all $n$. Similarly, $\mathfrak{C}_3^+\subseteq \mathfrak{C}_{2,\,n}^+$ and $\mathfrak{C}_3^-\subseteq \mathfrak{C}_{2,\,n}^-$ for all $n$.  \end{corollary}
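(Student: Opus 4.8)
The plan is to use the first two stages of a height-three Casson tower bounded by $K$ as the required height-two tower, and to observe that its standard curves are then not merely deep in the derived series of the complement of the core but in fact \emph{null-homotopic} there; since a null-homotopic loop lies in every term of the derived series, this yields membership in $\mathfrak{C}_{2,\,n}$ for all $n$ simultaneously.

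In detail, suppose $K\in\mathfrak{C}_3$, so $K$ bounds a Casson tower $T=T_1\cup T_2\cup T_3$ of height three in $B^4$ (with its fixed decomposition into kinky handles, $T_3$ denoting the union of the third-stage kinky handles). First I would set $T':=T_1\cup T_2$; this is a Casson tower of height two, still bounded by $K$ since its attaching curve is the attaching curve of $T_1$. Let $C'$ be the core of $T'$, and let $\{\mu_i\}$ be a standard set of curves for $T'$: these are exactly the curves along which the third-stage kinky handles $T_3^i\subseteq T_3$ were attached, and they lie in $\partial T'$. Each $\mu_i$ is the attaching curve of the kinky handle $T_3^i$, hence bounds the core of $T_3^i$, an immersed disk $D_3^i$ with $\partial D_3^i=\mu_i$. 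Since $T_3^i$ meets $T'$ only in a regular neighborhood of $\mu_i$ in $\partial T'$, and $C'$ lies in the interior of $T'$ away from the standard curves, $T_3^i$ is disjoint from $C'$; thus $D_3^i\subseteq B^4-C'$. An immersed disk is a null-homotopy of its boundary, so each $\mu_i$ is null-homotopic in $B^4-C'$ and therefore lies in $\pi_1(B^4-C')^{(n)}$ for every $n\geq0$. Hence $T'$ witnesses $K\in\mathfrak{C}_{2,\,n}$ for all $n$. For the decorated statements, if $K\in\mathfrak{C}_3^+$ (resp.\ $\mathfrak{C}_3^-$) one may choose $T$ with all base-level kinks positive (resp.\ negative); then $T'=T_1\cup T_2$ inherits this, so the same argument gives $K\in\mathfrak{C}_{2,\,n}^+$ (resp.\ $\mathfrak{C}_{2,\,n}^-$) for all $n$. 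Note that this is genuinely stronger than what Proposition \ref{towersandgropes} alone would give: applied to the height-one tower $T_3^i$ it only shows that $\mu_i$ bounds a surface in $B^4-C'$, placing $\mu_i$ in $\pi_1(B^4-C')^{(1)}$ and so $K$ only in $\mathfrak{C}_{2,\,1}$; the improvement to all $n$ comes from the triviality of $\mu_i$, not just its bounding a surface.

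The only point requiring real care — and the one I expect to be the main (though modest) obstacle — is the assertion that the core $C'$ of the height-two tower $T'$ is disjoint from the third-stage kinky handles of $T$. This is not a calculation but an unwinding of the definition of the core of a Casson tower (its kinky disks together with the canonical annuli, following \cite[Section 2.2.6]{GomSin84}): one must verify that the core lies in the interior of the lower stages and avoids the standard curves along which later stages are attached, and that restricting $T$ to $T_1\cup T_2$ recovers precisely the intrinsic core of that height-two tower, with no leftover annuli reaching into $T_3$. Granting this, the argument above completes the proof.
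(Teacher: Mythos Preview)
Your proof is correct and follows essentially the same approach as the paper's: take the first two stages of the height-three tower as $T'$, observe that each standard curve for $T'$ bounds a third-stage kinky disk away from the core $C'$, and conclude null-homotopy in $B^4-C'$ hence membership in every $\pi_1(B^4-C')^{(n)}$. The paper simply asserts the disjointness you flagged (``bounds a kinky disk away from $C$'') without elaboration; your careful unwinding of this point is extra care rather than a different argument.
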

\begin{proof} Suppose a knot bounds a Casson tower $T$ of height three. Each member of a standard set of curves for the second stage of $T$ bounds a kinky disk away from $C$, the core of the first two stages. Therefore, the curves must be null-homotopic away from $C$ and as a result, contained in $\pi_1(B^4 -  C)^{(n)}$ for all $n$.\end{proof}

\begin{proposition} $\mathfrak{C}_{2,\,n}\subseteq \mathcal{G}_{2,\,n} \subseteq \mathcal{F}_n$ for all $n\geq 0$.\end{proposition}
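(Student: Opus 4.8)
### Proof plan for $\mathfrak{C}_{2,\,n}\subseteq \mathcal{G}_{2,\,n} \subseteq \mathcal{F}_n$

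The plan is to reduce at once to the first inclusion: the inclusion $\mathcal{G}_{2,\,n}\subseteq\mathcal{F}_n$ is exactly the second half of Theorem~\ref{g2n}, so nothing new is needed there, and the entire content lies in proving $\mathfrak{C}_{2,\,n}\subseteq\mathcal{G}_{2,\,n}$.

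So suppose $K\in\mathfrak{C}_{2,\,n}$, bounding a Casson tower $T$ of height two with core $C$, such that every member of a standard set of curves $\{c_1,\dots,c_r\}$ for $T$ lies in $\pi_1(B^4-C)^{(n)}$. First I would invoke Proposition~\ref{towersandgropes} in the case of height two: the attaching curve of $T$ — that is, $K$ — bounds a properly embedded grope $G$ of height two with untwisted framing, and, crucially, $G$ sits \emph{inside} $T$. It then remains only to verify the derived-series condition in the definition of $\mathcal{G}_{2,\,n}$: that pushoffs of a symplectic basis for $H_1$ of each second-stage surface of $G$ lie in $\pi_1(B^4-G)^{(n)}$.

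Two ingredients go into this. The first is soft: since $G\subseteq T$ and $T$ is a regular neighborhood of $C$, the inclusion $B^4-T\hookrightarrow B^4-G$, combined with the identification $\pi_1(B^4-T)\cong\pi_1(B^4-C)$, yields a homomorphism $\pi_1(B^4-C)\to\pi_1(B^4-G)$ carrying $n$-th derived subgroup into $n$-th derived subgroup; hence any curve homotopic in $B^4-T$ to a member of $\{c_j\}$ automatically lands in $\pi_1(B^4-G)^{(n)}$ by hypothesis, and any curve null-homotopic in $B^4-G$ lies vacuously in every derived subgroup. The second ingredient, which carries the real work, is to read off the symplectic basis curves of the second-stage surfaces of $G$ from the construction in the proof of Proposition~\ref{towersandgropes}: each second-stage surface arises as an annulus joined to the core of a second-stage kinky handle with its kinks resolved by tubes along the terminal dotted circles, so its symplectic basis splits — one pair per terminal dotted circle $\delta$ — into a meridian of $\delta$, which \emph{is} a member of the standard set of curves for $T$, and a ``longitude''-type curve running around the corresponding tube. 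One then checks, directly from the Kirby pictures (Figures~\ref{towersandgropesfig2}--\ref{towersandgropesfig4}), that each such longitude bounds a disk in $B^4$ that can be pushed off $G$, hence is null-homotopic in $B^4-G$. Combining the two ingredients gives the result.

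I expect the bookkeeping in the previous paragraph to be the main obstacle. One must track the nested tubes of the construction of Proposition~\ref{towersandgropes} carefully enough to be confident that (a) each meridian basis curve really is a standard curve of $T$ up to homotopy in $B^4-T$, not merely up to homotopy in $T$ or up to conjugation; and (b) each ``longitude'' basis curve genuinely bounds a disk disjoint from \emph{all} of $G$, not just from the part of $G$ it visibly runs along. Everything else — the reduction, the soft $\pi_1$ statement, and the appeal to Theorem~\ref{g2n} — is routine.
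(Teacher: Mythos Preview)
Your overall approach is essentially the paper's: reduce to the first inclusion, apply Proposition~\ref{towersandgropes} to get a height-two grope $G$ inside $T$, and then verify the derived-series condition by identifying the symplectic basis curves of the second-stage surfaces with the standard set of curves of $T$. The ``soft'' $\pi_1$ step you describe (passing from $\pi_1(B^4-C)^{(n)}$ to $\pi_1(B^4-G)^{(n)}$ via the inclusion) is exactly what the paper uses implicitly.

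There is, however, a concrete misidentification in your second ingredient. You claim that for each terminal dotted circle $\delta$ the symplectic pair on the second-stage surface consists of a meridian of $\delta$ (a standard curve) and a ``longitude'' that is null-homotopic in $B^4-G$. This is not correct: the longitude-type curve is \emph{also} homotopic to a meridian of $\delta$, not to a constant. This is precisely what Step~2 of the proof of Proposition~\ref{towersandgropes} establishes at the first stage (``both the meridian $m$ and the longitude $\ell$ of $\Sigma$ are homotopic to $\alpha_1$''), and the same phenomenon occurs at the second stage because the 0-framed 2-handle curve again clasps itself through the terminal dotted circle in a Whitehead-link pattern. The annulus you would find in the Kirby picture cobounds $\ell$ with a pushoff of the meridian of $\delta$, not with a curve bounding a disk off $G$. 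Indeed, if the longitude were always null-homotopic in $B^4-G$, then so would the standard curve be, and the hypothesis $c_j\in\pi_1(B^4-C)^{(n)}$ would be nearly vacuous.

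Fortunately this does not damage your plan: once you correct the identification, \emph{both} basis curves are homotopic (in $B^4-T$, hence in $B^4-G$) to standard curves of $T$, and your soft $\pi_1$ statement finishes the argument exactly as before. The paper's proof simply states this directly: ``the generators of the first homology groups of the second stage surfaces for $G$ are exactly the meridians of the dotted circles of the second stage kinky disks of $T$.'' So your bookkeeping concern (a) is real and is the whole content, while concern (b) evaporates --- there are no longitude disks to push off.
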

\begin{proof}The second inclusion is from Theorem \ref{g2n}. For the first inclusion, suppose we have a knot $K$ in $\mathfrak{C}_{2,\,n}$. That is, $K$ bounds a Casson tower $T\subseteq B^4$ of height two such that the standard set of curves are in $\pi_1(B^4 -  C)^{(n)}$, where $C$ is the core of $T$. By Proposition \ref{towersandgropes}, we know that $K$ bounds a grope $G$ of height two within $T$. In fact, we see that the generators of the first homology groups of the second stage surfaces for $G$ are exactly the meridians of the dotted circles of the second stage kinky disks of $T$, i.e.\ they are exactly the standard set of curves for $T$, which are given to be in $\pi_1(B^4 -  C)^{(n)}$. Therefore, $K\in\mathcal{G}_{2,\,n}$. \end{proof}

\begin{proposition}\label{easytower} $\mathfrak{C}_{n+2}^+ \subseteq \mathcal{P}_n$ for all $n\geq 0$. Similarly, $\mathfrak{C}_{n+2}^-\subseteq\mathcal{N}_n$ for all $n\geq 0$.\end{proposition}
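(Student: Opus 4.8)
The plan is to build a smooth, compact, oriented $4$--manifold $V$ with $\partial V=S^3$ containing a properly embedded disk $\Delta$ bounded by $K$ that exhibits $K$ as $n$--positive. Suppose $K$ bounds a Casson tower $T$ of height $n+2$ in $B^4$ all of whose base-level kinks are positive, and let $k_0$ be the number of these kinks.

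First I would dispose of the base level by blowing up: each positive base-level kink can be removed at the cost of connect-summing $B^4$ with a copy of $\mathbb{CP}^2$, as recalled in the introduction (following \cite{Gom86,CLic86,CHHo13}). This produces $\widetilde B:=B^4\#^{k_0}\mathbb{CP}^2$, which is simply connected with positive-definite intersection form generated by the $k_0$ exceptional $2$--spheres $e_1,\dots,e_{k_0}$; being $2$--spheres, these automatically satisfy condition~(3) in the definition of $\mathcal P_n$ for every $n$. The base kinky disk of $T$ becomes an embedded disk $\widehat\Delta_0\subseteq\widetilde B$ with $\partial\widehat\Delta_0=K$, while the stages of $T$ above the base kinky handle---which are Casson towers of height $n+1$ attached to the standard curves of the base---are undisturbed. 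By Proposition~\ref{towersandgropes} applied to those subtowers, every standard curve of the base kinky handle, i.e.\ every meridian of $\widehat\Delta_0$, bounds a properly embedded grope of height $n+1$ in $\widetilde B$ that is disjoint from $\widehat\Delta_0$ and from the $e_j$.

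Next I would take $\widetilde B$ together with the embedded disk $\widehat\Delta_0$ and the height-$n+1$ gropes on its meridians, and run the construction from the proof that a grope of height $n+2$ implies $n$--solvability (Theorem~\ref{g2n}, \cite[Theorem 8.11]{COT03}): attach $2$--handles to the meridians of $\widehat\Delta_0$ with suitable framings to obtain $V$. Since these meridians generate $\pi_1(\widetilde B-\widehat\Delta_0)$ one gets $\pi_1(V)=0$, and because the gropes on them have height $n+1$ the standard commutator bookkeeping of \cite{COT03} places the surfaces dual to the new handles in $\pi_1(V-\Delta)^{(n)}$. The point of having first blown up the base is that the positive base-level kinks are now carried entirely by the exceptional spheres, which keeps the intersection form of $V$ positive definite rather than producing the indefinite summand a naive grope argument would give. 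Finally I would modify $\widehat\Delta_0$ to a disk $\Delta$ with $[\Delta]=0$ in $H_2(V,S^3)$, tubing it to the exceptional spheres and to the cocores of the attached $2$--handles, while arranging at the same time that a basis of $H_2(V)$---built from the $e_j$ and whatever further classes the construction contributes---remains disjointly embedded in the exterior of $\Delta$ with fundamental group inside $\pi_1(V-\Delta)^{(n)}$.

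I expect this last step to be the main obstacle, and it is the reason the statement is not entirely formal. Because the kinks being resolved are \emph{positive}, one is forced to use $\mathbb{CP}^2$ (rather than $\overline{\mathbb{CP}}{}^2$) in order to keep $V$ positive definite, and this in turn forces the resolved disk $\widehat\Delta_0$ to be homologically essential in $(V,S^3)$ and to meet the exceptional spheres; so one cannot simply take $\Delta=\widehat\Delta_0$ and $\{S_i\}=\{e_j\}$. The natural way to cancel $[\widehat\Delta_0]$ and then separate the homology basis from the resulting disk is by tubing along arcs in $\Delta$, but such tubes a priori contribute only to $\pi_1(V-\Delta)^{(0)}$; the essential use of the $n+1$ stages above the base (equivalently, of the gropes of height $n+1$ on the meridians) is to provide enough room, deep in the derived series, to carry out these tubings without spoiling condition~(3). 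Checking that the height genuinely suffices---this is exactly where $n+2$ stages are needed and $2$ would not do---is the heart of the argument. The statement $\mathfrak C_{n+2}^-\subseteq\mathcal N_n$ then follows by the identical argument after reversing orientation and replacing $\mathbb{CP}^2$ by $\overline{\mathbb{CP}}{}^2$ throughout.
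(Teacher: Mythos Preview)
Your opening move---blow up the base-level positive kinks to land in $V=B^4\#^{k_0}\mathbb{CP}^2$---is exactly what the paper does. But the next step diverges in a way that creates a genuine gap. You propose to ``attach $2$--handles to the meridians of $\widehat\Delta_0$'' and ``run the construction from the proof that a grope of height $n+2$ implies $n$--solvability.'' The meridians of $\widehat\Delta_0$ are nullhomotopic in the simply connected manifold $\widetilde B$, so surgery on them produces $S^2\times S^2$ or $\mathbb{CP}^2\#\overline{\mathbb{CP}}{}^2$ summands; either way the intersection form acquires an indefinite piece and you lose positive definiteness. More broadly, the COT construction is designed to produce a hyperbolic intersection form (the dual pairs $L_i,D_i$), which is exactly what you must avoid here. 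You also do not need these handles for $\pi_1$: $\widetilde B$ is already simply connected.

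The paper's approach avoids all of this by never modifying the $4$--manifold beyond the initial blow-ups. After blowing up, the attaching curve becomes unknotted in the Kirby diagram and bounds an obvious disk $\Delta$ there; this $\Delta$ is pierced algebraically zero times by each $+1$--framed circle, so $[\Delta]=0$ in $H_2(V,S^3)$ with no tubing needed. The entire effort then goes into modifying the \emph{surfaces}, not the manifold or the disk: starting from the na\"{\i}ve generator (core of the $+1$--handle capped off), one tubes along the attaching curve to get off $\Delta$, surgers along longitudes, and tubes along the first-stage dotted circles, eventually producing a genus-$g$ surface $S$ generating $H_2(V)$ whose $\pi_1$--generators are isotopic to the standard curves of the \emph{second} stage of $T$. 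Those curves bound Casson towers of height $n$ away from everything already built, hence gropes of height $n$ by Proposition~\ref{towersandgropes}, which puts $\pi_1(S)\subseteq\pi_1(V-\Delta)^{(n)}$. So the ``height bookkeeping'' you correctly anticipate is there, but it is used to control $\pi_1$ of the $H_2$--basis surfaces, not to repair the homology class of the disk.
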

\begin{proof}As before, we first show the proof in the case where there is a single kink at each stage of the Casson tower, and then describe the necessary changes for the general case. 

\begin{figure}[ht!]
  \begin{center}
  \includegraphics[width=4in]{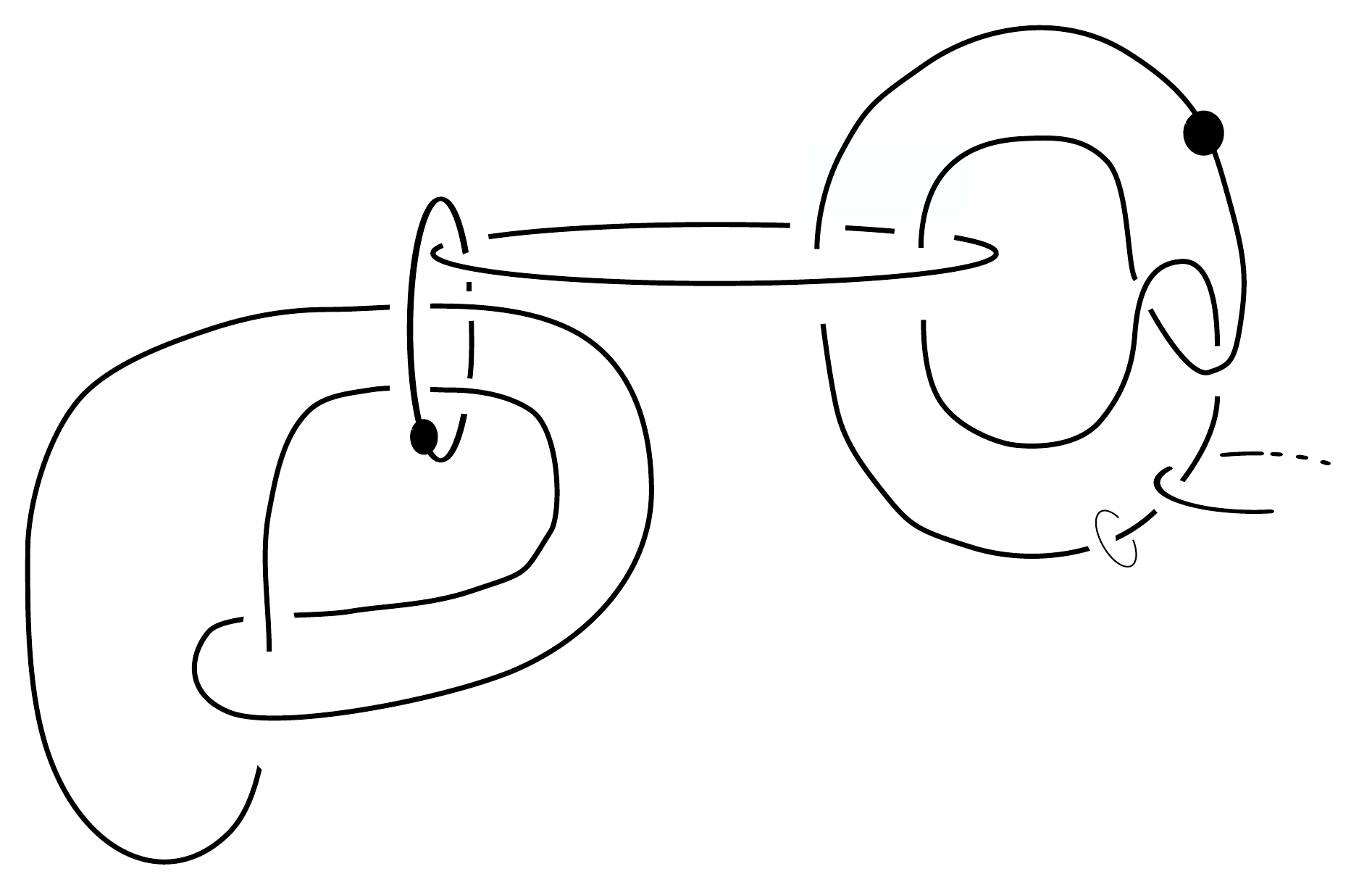}
  \put(-2,2.05){0}
  \put(-0.3,0.9){0}  
  \put(-0.75,0.8){$\alpha$}  
  \caption{Proof of Proposition \ref{easytower}: A Kirby diagram of the first two stages of a Casson tower with a single positive kink at each stage.}\label{proof1}
  \end{center}
\end{figure}

\begin{figure}[ht!]
  \begin{center}
  \includegraphics[width=4in]{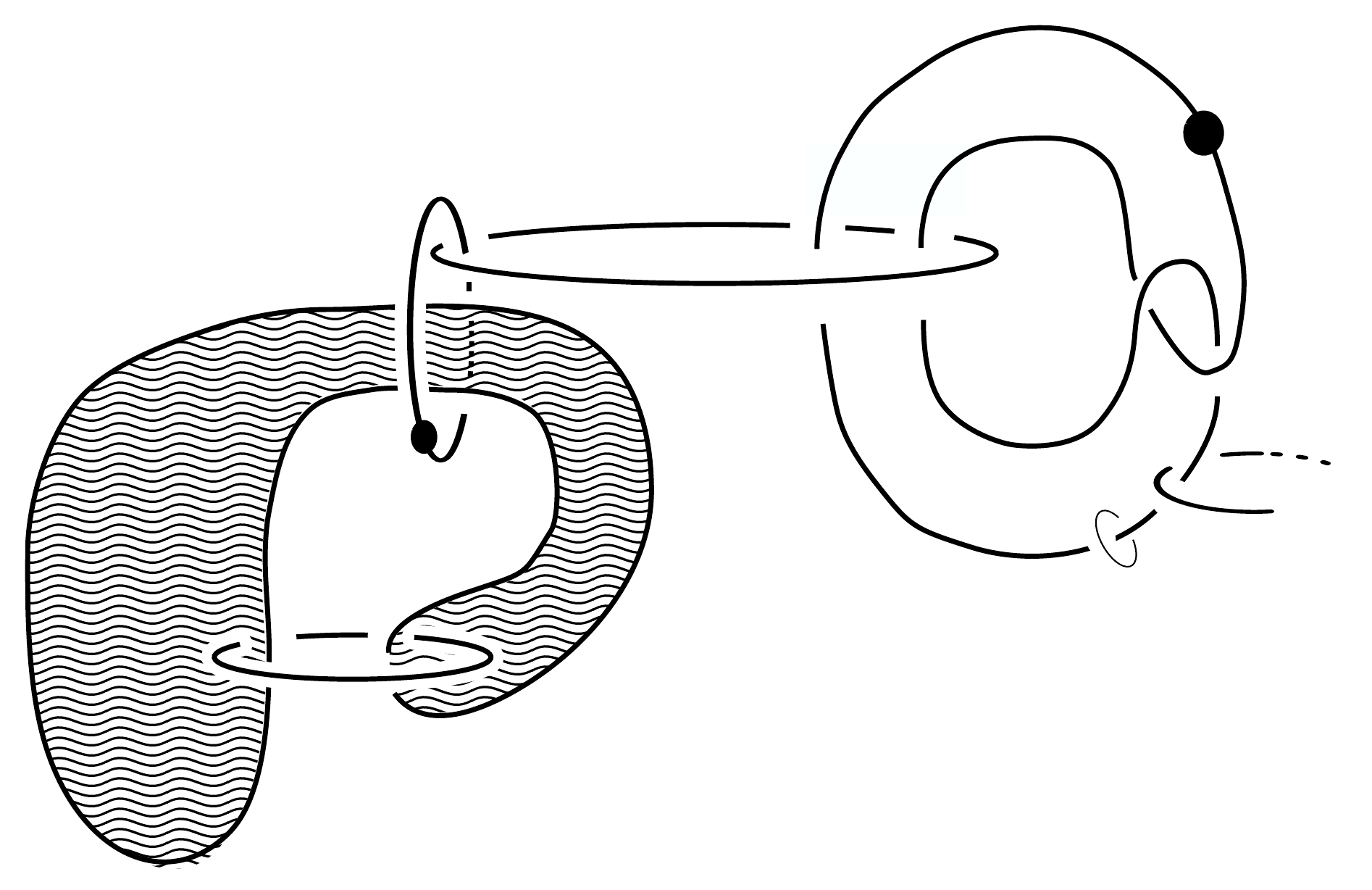}
  \put(-2,2.05){0}
  \put(-0.3,0.9){0}  
  \put(-0.75,0.8){$\alpha$}    
  \put(-3.2,0.4){+1} 
  \caption{Proof of Proposition \ref{easytower}: After blowing up the first stage kink, a slice disk for the attaching curve can be seen.}\label{proof2}
  \end{center}
\end{figure}

\begin{figure}[ht!]
  \begin{center}
  \includegraphics[width=4in]{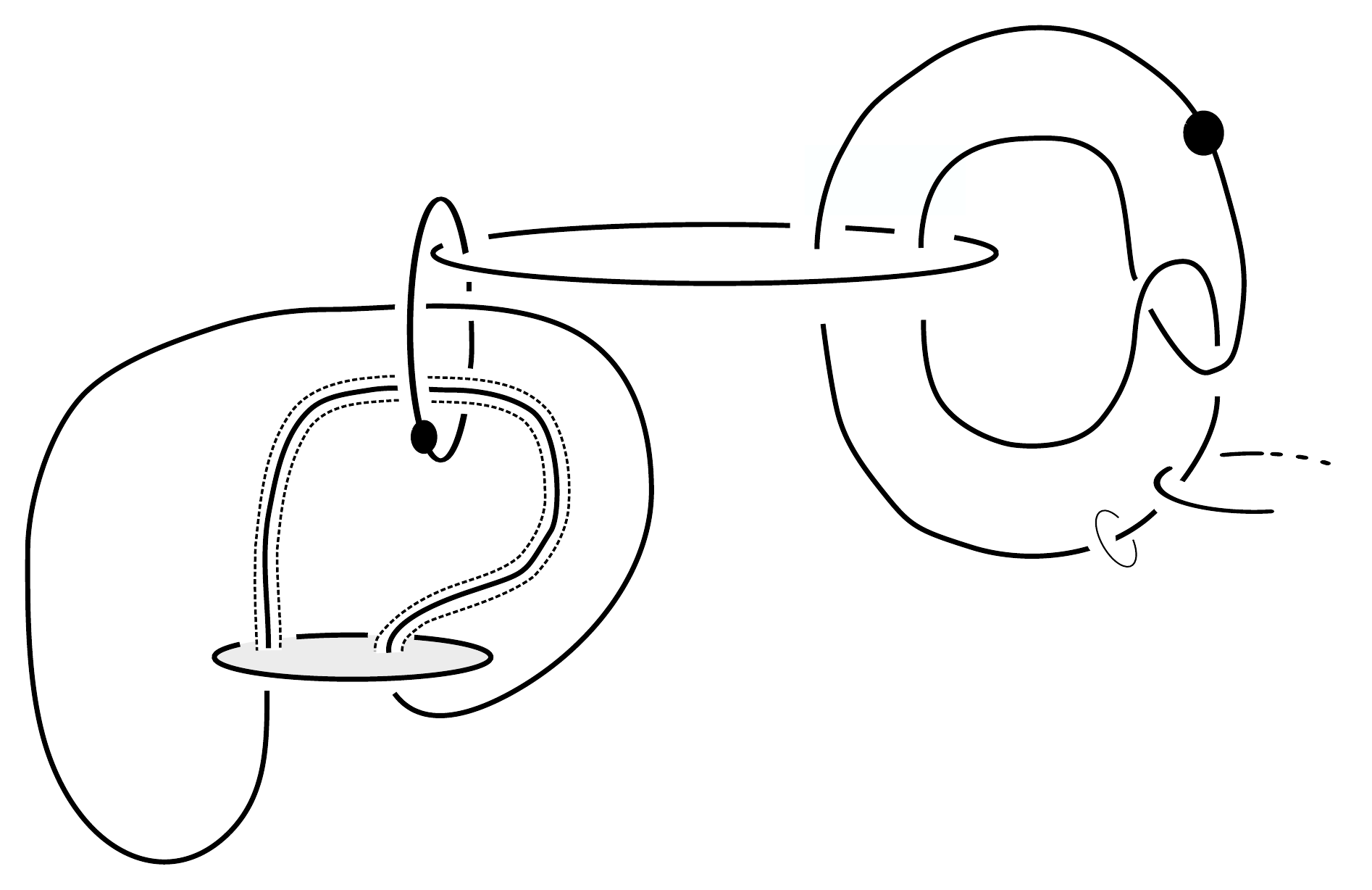}
  \put(-2,2.05){0}
  \put(-0.3,0.9){0}  
  \put(-0.75,0.8){$\alpha$}   
  \put(-3.2,0.4){+1}    
  \caption{Proof of Proposition \ref{easytower}: Finding generators of $H_2(V)$.}\label{h2generators1}
  \end{center}
\end{figure}

\begin{figure}[ht!]
  \begin{center}
  \includegraphics[width=4in]{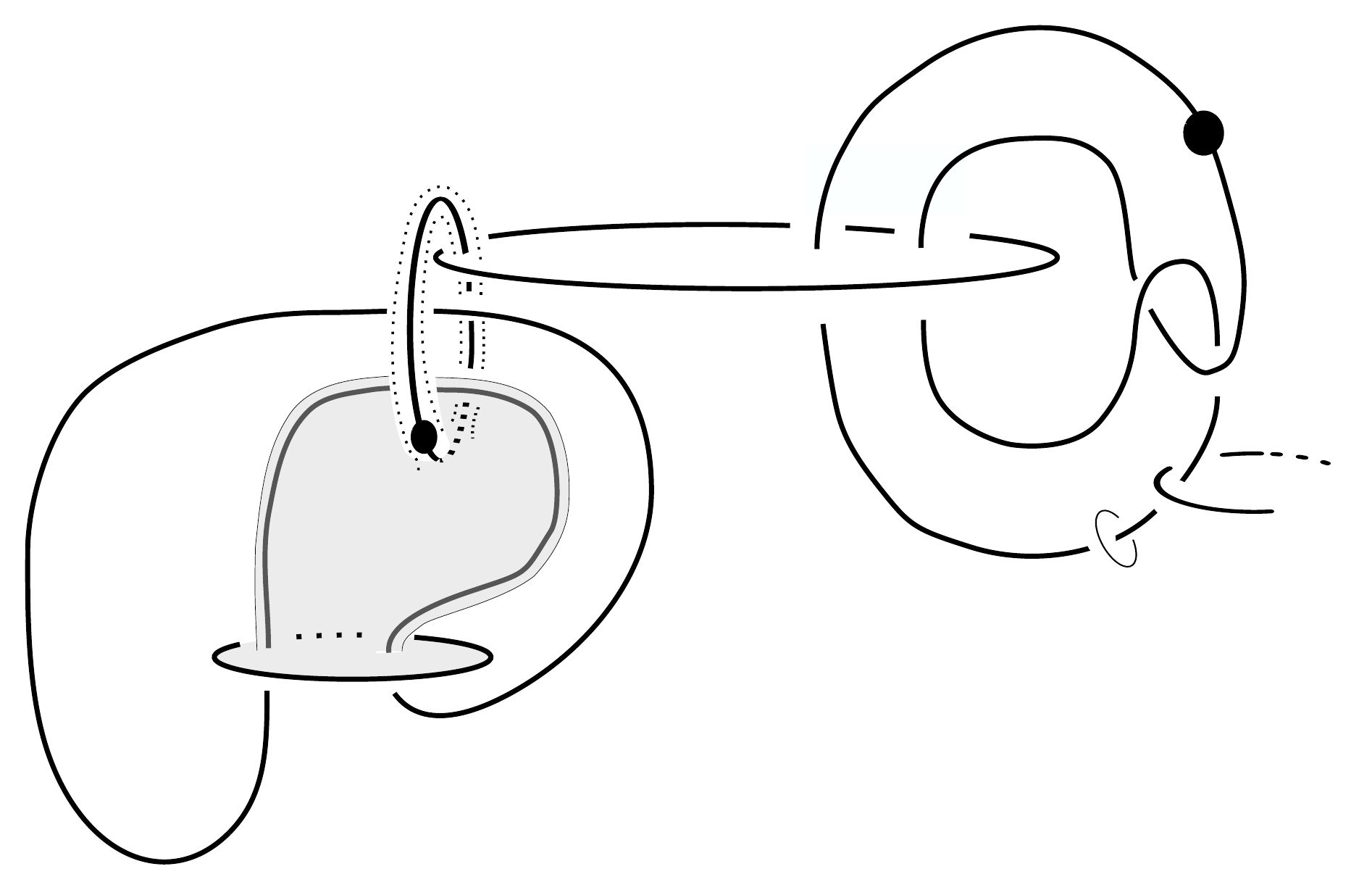}
  \put(-2,2.05){0}
  \put(-0.3,0.9){0}  
  \put(-0.75,0.8){$\alpha$}   
  \put(-3.2,0.4){+1}      
  \caption{Proof of Proposition \ref{easytower}: Finding generators of $H_2(V)$.}\label{h2generators2}
  \end{center}
\end{figure}

\begin{figure}[ht!]
  \begin{center}
  \includegraphics[width=4in]{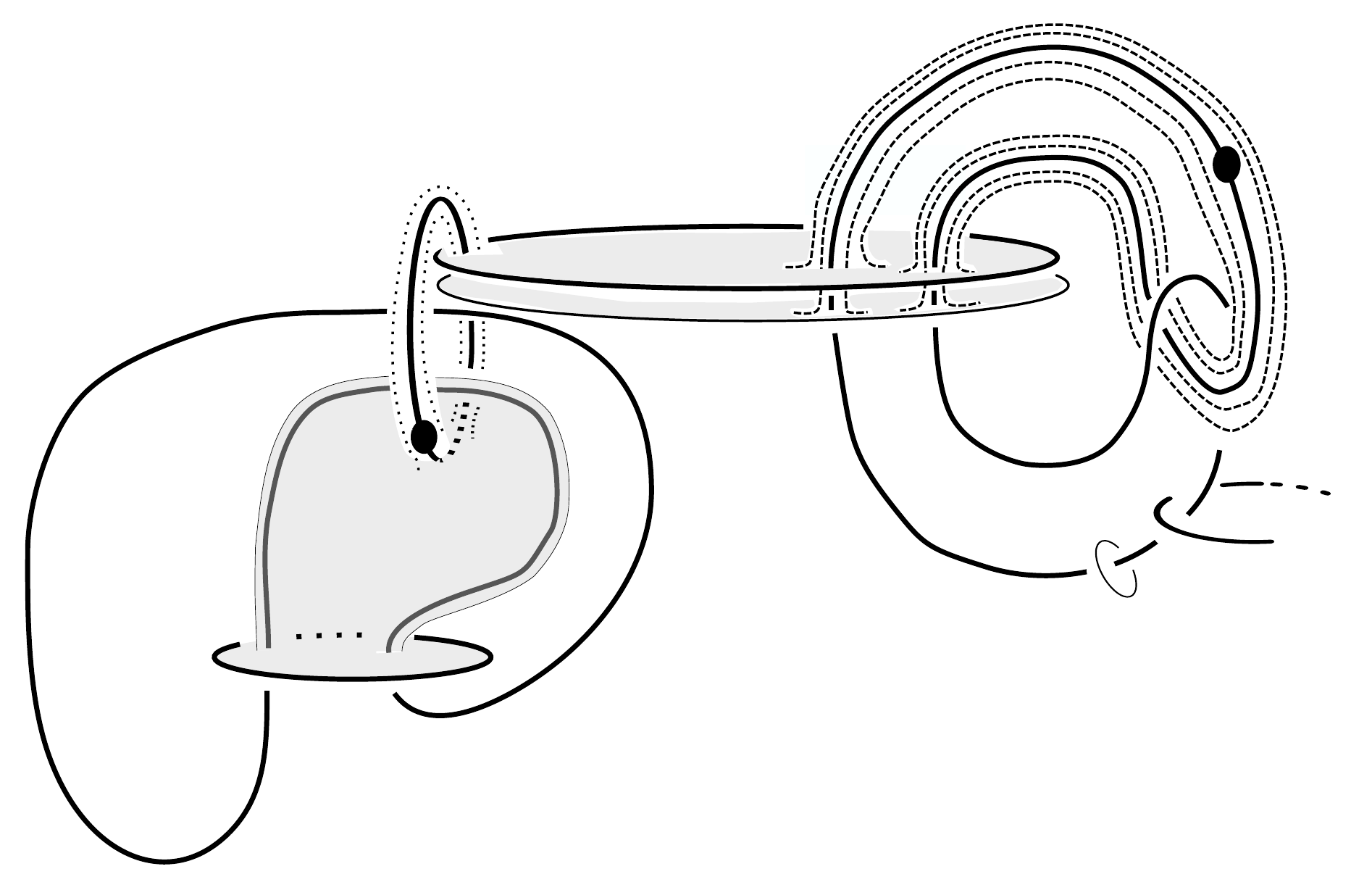}
  \put(-2,2.05){0}
  \put(-0.3,0.8){0}  
  \put(-0.75,0.7){$\alpha$}    
  \put(-3.2,0.4){+1}   
  \caption{Proof of Proposition \ref{easytower}: Finding generators of $H_2(V)$.}\label{h2generators3}
  \end{center}
\end{figure}

Figure \ref{proof1} shows a Kirby diagram for the first two stages of a Casson tower $T$ with a single positive kink at each stage. We blow up at the kink in the first stage disk. In our Kirby diagram, Figure \ref{proof2}, this introduces a $+1$--framed 2--handle, indicating that the new manifold is diffeomorphic to $T \#\,\mathbb{CP}(2)$. Since the blow up occurred in the interior of $T$, we have an embedding $T\#\,\mathbb{CP}(2)\hookrightarrow B^4\#\,\mathbb{CP}(2)$ (where earlier we had an embedding $T\hookrightarrow B^4$). Let $V$ denote $B^4\#\,\mathbb{CP}(2)$. Recall that a 0--framed neighborhood of the knot $K$ can be seen as a 0--framed neighborhood of the undecorated attaching curve in the Kirby diagram for the Casson tower. As a result, a slice disk $\Delta$ for $K$ is obvious in $T\#\mathbb{CP}(2)$, shown in Figure \ref{proof2} (the attaching curve for the 2--handle pierces through it twice transversely). We also see that the 0--framed push off of the Casson tower attaching curve bounds a disjoint parallel disk. Since $V$ is simply connected with positive definite intersection form all that remains to be done is to find a generator $S$ for $H_2(V)\cong\mathbb{Z}$ such that $\pi_1(S)\subseteq \pi_1(V - \Delta)^{(n)}$. We will do so by finding a generator $S$ such that the generators of $\pi_1(S)$ bound gropes of height $n$ in $(T\#\,\mathbb{CP}(2)) -  \Delta$.

For clarity, we describe how we obtain such an $S$ in several steps. The na\"{\i}ve choice of generator for $H_2(V)$ is the core of the attached $+1$--framed 2--handle along with the obvious disk bounded by it, shaded in gray in Figure \ref{h2generators1}. However, this clearly intersects $\Delta$. We can avert this problem by tubing along the attaching curve. While this does yield a torus generator for $H_2(V)$ disjoint from $\Delta$, one of its $H_1$--generators is the meridian of the attaching curve (and therefore the knot.) We try to fix this by surgering along the longitude of the torus using the obvious disks (pierced through by the dotted circle in Figure \ref{h2generators1}). The 2--sphere obtained intersects the dotted circle and so we tube along it, as shown in Figure \ref{h2generators2}. This yields another torus generator for $H_2(V)$, but once again, one of its $H_1$--generators is the meridian of the attaching curve. Fortunately, we can address this easily by noting that the meridian of the present torus bounds a punctured torus. Cut along the meridian and glue in two copies of the punctured torus to finally obtain a generator $S$ of $H_2(V)$ in Figure \ref{h2generators3}. We claim that this is the desired surface generating $H_2(V)$. 

Note that each member of the standard generating set for $\pi_1(S)$ is homotopic to a meridian of the second stage dotted circle, $\alpha$ (i.e.\ the standard curve for the second stage of $T$) away from $\Delta$. Since the standard curve bounds a Casson tower of height $n$ (and therefore a grope of height $n$) away from $\Delta$, we see that $\pi_1(S)\subseteq\pi_1(V - \Delta)^{(n)}$. But we can do better---we can show that the members of the standard generating set for $\pi_1(S)$ themselves bound disjoint gropes away from the first two stages of $T$. The only additional step needed is to find disjoint annuli connecting the generators of $\pi_1(S)$ to $\alpha$. This is the same construction as in the proof of Proposition \ref{towersandgropes} when we constructed the third stage surfaces of a grope, and we omit it to avoid repetition.

The reader might ask why $S$ constructed above is a generator of $H_2(V)$. To see this, start with the na\"{\i}ve choice of generator $s$, namely, the standard disk (shaded in gray in Figure \ref{h2generators1}) capped off with the core of the +1--framed 2--handle. Take a pushoff $\bar{s}$ of $s$. The spheres $\bar{s}$ and $s$ intersect exactly once transversely with positive sign. Now perform the various tubing operations described above on $\bar{s}$---we can do so in the complement of $s$. The resulting surface $S$ will continue to have a single positive transverse intersection with $s$ and therefore is in the same homology class as $s$. 

For more complicated Casson towers, we apply the same process. To begin with, in the Kirby diagram for a general Casson tower, the attaching curve will clasp itself multiple times (equal to the number of kinks in the first-stage kinky handle), as shown in Figure \ref{height2tower}. We blow up at each of these kinks, introducing a +1--framed 2--handle at each clasp. Since the clasps are isolated from one another the first step of the proof in the simple case goes through and we can observe a slice disk for the attaching curve which is pieced through twice transversely by each introduced 2--handle. The number of generators of $H_2(V)$ needed is equal to the number of kinks in the first stage of the tower, since this is equal to the number of $\CP(2)$ summands introduced to the 4--manifold. We construct these generators as before. In particular, at each +1--framed 2--handle, we start with the core of the 2--handle tubed along the attaching curve, surger along the longitude, and tube along the dotted circle (Figures \ref{h2generators1} and \ref{h2generators2}). Since the newly introduced 2--handles do not interact with each other, we can do this just as easily as in the simple case. The only difference for the general case comes in the next step, pictured in Figure \ref{h2generators3}. In the general case we might have multiple kinks in the second stage kinky handles, each of whom will contribute a dotted circle to the Kirby diagram. As we did in the proof of Proposition \ref{towersandgropes}, we simply tube along each such dotted circle to get the desired generators of $H_2(V)$. Since Proposition \ref{towersandgropes} holds for general Casson towers, this completes the proof in the general case. For each kink in the first stage, the genus of the corresponding member of the set of generators of $H_2(V)$ is equal to the number of kinks in the associated second stage kinky disk. 

The above shows that $\mathfrak{C}_{n+2}^+\subseteq\mathcal{P}_n$. For a knot $K\in\mathfrak{C}_{n+2}^-$ the kinks in the first stage kinky disk would be negative and we would blow up using $-1$--framed 2--handles, indicating a connected sum with $\overline{\mathbb{CP}(2)}$. The rest of the construction is analogous.\end{proof}

\begin{proposition}\label{weirdtower} $\mathfrak{C}_{2,\,n}^+ \subseteq \mathcal{P}_n$ for all $n\geq 0$. Similarly, $\mathfrak{C}_{2,\,n}^-\subseteq \mathcal{N}_n$ for all $n\geq 0$.\end{proposition}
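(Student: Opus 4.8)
The plan is to follow the proof of Proposition~\ref{easytower} line for line, changing only the step in which the derived-series condition on the generating surfaces is verified. So let $K\in\mathfrak{C}_{2,\,n}^+$: the knot $K$ bounds a Casson tower $T\subseteq B^4$ of height two with core $C$, with only positive base-level kinks, and with the property that every member $\gamma_j$ of a chosen standard set of curves for $T$ lies in $\pi_1(B^4 -  C)^{(n)}$. Exactly as in Proposition~\ref{easytower}, blow up each base-level kink to obtain a simply connected $4$--manifold $V$ (a connected sum of $B^4$ with one copy of $\mathbb{CP}(2)$ for each base-level kink) with positive definite intersection form; the base-level kinky disk $D_0\subseteq C$ becomes an embedded slice disk $\Delta$ for $K$ with $[\Delta]=0$ in $H_2(V,S^3)$ (the $0$--framed pushoff of $K$ bounds a disjoint parallel copy of $\Delta$), and one constructs the surfaces $\{S_i\}$ generating $H_2(V)$, disjointly embedded in $V -  \Delta$, so that each generator of $\pi_1(S_i)$ is homotopic in $V -  \Delta$ to one of the $\gamma_j$ (via annuli built as in Proposition~\ref{towersandgropes}). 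All of this, including the genus count, is imported verbatim, and leaves only the condition $\pi_1(S_i)\subseteq\pi_1(V -  \Delta)^{(n)}$ to be checked.

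In Proposition~\ref{easytower} that condition followed because the $\gamma_j$ bounded gropes of height $n$ away from the first two stages; here we have instead only the abstract hypothesis $\gamma_j\in\pi_1(B^4 -  C)^{(n)}$, and the plan is to transport it across the blow-up. Concretely, I would produce a homomorphism $\phi\colon \pi_1(B^4 -  C)\to\pi_1(V -  \Delta)$ carrying the class of each $\gamma_j$ to the class of the generator of $\pi_1(S_i)$ to which it is homotopic. Writing $\widehat{C}$ for the proper transform of $C$ (so $\widehat{C}$ is $\Delta$ together with the second-stage core and annuli, which the blow-up leaves untouched), one has $\Delta\subseteq\widehat{C}$, hence an inclusion-induced map $\pi_1(V -  \widehat{C})\to\pi_1(V -  \Delta)$. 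A van Kampen computation on the region introduced by the blow-up---a disk bundle over $S^2$ with the proper transforms of the two local sheets (two fibres) deleted---shows that blowing up a double point $p$ of $C$ has the sole effect on the $\pi_1$ of the complement of imposing the relation $\mu\mu'=1$ between the meridians $\mu,\mu'$ of the two sheets at $p$; this gives a canonical epimorphism $\pi_1(B^4 -  C)\to\pi_1(V -  \widehat{C})$, and $\phi$ is the composite. Since each $\gamma_j$ lies in the part of $V -  \Delta$ that the blow-down identifies with $B^4 -  C$, $\phi(\gamma_j)$ is exactly the class of $\gamma_j$ there; as $\phi$ is a homomorphism it sends $n$-th derived subgroups into $n$-th derived subgroups, so $\gamma_j\in\pi_1(B^4 -  C)^{(n)}$ yields $\phi(\gamma_j)\in\pi_1(V -  \Delta)^{(n)}$, and therefore $\pi_1(S_i)\subseteq\pi_1(V -  \Delta)^{(n)}$. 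This is the last requirement in the definition of $\mathcal{P}_n$, so $K\in\mathcal{P}_n$; the inclusion $\mathfrak{C}_{2,\,n}^-\subseteq\mathcal{N}_n$ is proved identically, blowing up with $\overline{\mathbb{CP}(2)}$ summands.

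The main obstacle I anticipate is making the homomorphism $\phi$ honest and checking that it is compatible with all the geometry carried over from Proposition~\ref{easytower}: one must confirm that the standard curves $\gamma_j$, the generators of $\pi_1(S_i)$, and the connecting annuli from Proposition~\ref{towersandgropes} all lie in, and interact correctly within, the region of $V -  \Delta$ on which the blow-down restricts to a diffeomorphism onto $B^4 -  C$, so that ``the class of $\gamma_j$ in $\pi_1(B^4 -  C)$'' and ``the class of $\gamma_j$ in $\pi_1(V -  \Delta)$'' genuinely correspond under $\phi$ rather than under some other identification. Once this bookkeeping is in place, everything else is literally the proof of Proposition~\ref{easytower} together with the elementary fact that group homomorphisms respect the derived series.
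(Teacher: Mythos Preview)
Your proposal is correct and follows the same approach as the paper: both reduce the statement to the construction in Proposition~\ref{easytower}, observing that the only place the higher stages of the tower were used was to certify the derived-series condition on the generators of $\pi_1(S_i)$. The paper's own proof is a two-sentence remark to this effect and does not spell out the passage from $\pi_1(B^4-C)^{(n)}$ to $\pi_1(V-\Delta)^{(n)}$; you have identified this as the one step requiring care and supplied it. Your homomorphism $\phi$ is simply the composite of the blow-down diffeomorphism $V-\bigl(\widehat{C}\cup\bigcup_i E_i\bigr)\cong B^4-C$ (the blow-up points lie on $C$) with the inclusion into $V-\Delta$ (since $\Delta\subseteq\widehat{C}$), and the van~Kampen analysis you sketch is a correct, if slightly heavier, way to see the same map. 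The bookkeeping worry you flag is not serious: the curves $\gamma_j$, the annuli from Proposition~\ref{towersandgropes}, and the generators of $\pi_1(S_i)$ all live near the second-stage dotted circles, away from the exceptional divisors, so they sit in the region where the blow-down is a diffeomorphism and $\phi$ acts by the identity on their classes.
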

\begin{proof}The proof of our previous proposition did not truly require the Casson tower beyond the first two levels. If the standard set of curves of a tower of height two is known to be in the $n^\text{th}$--derived subgroup of the fundamental group of the exterior of the core of the first two stages, the remainder of the proof follows identically. \end{proof} 

The following is now an immediate corollary of Corollary \ref{height3again} and and the above result, and reveals the inefficacy of Proposition \ref{easytower} in studying the positive and negative filtrations of $\mathcal{C}$.

\begin{corollary}\label{height3}$\mathfrak{C}_3^+ \subseteq \bigcap_{n=0}^\infty \mathcal{P}_n$. Similarly, $\mathfrak{C}_3^- \subseteq \bigcap_{n=0}^\infty \mathcal{N}_n$. \end{corollary}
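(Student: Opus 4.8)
The plan is simply to chain together the two previously established inclusions. The first step is to record, via Corollary \ref{height3again}, that any knot bounding a Casson tower of height three with all positive base-level kinks lies in $\mathfrak{C}_{2,\,n}^+$ for \emph{every} $n\geq 0$. Indeed, for such a tower the standard set of curves for the second stage bounds the third-stage kinky handle disjointly from the core $C$ of the first two stages, so each of these curves is null-homotopic in $B^4 - C$ and therefore lies in $\pi_1(B^4 - C)^{(n)}$ for all $n$ at once. Hence $\mathfrak{C}_3^+ \subseteq \bigcap_{n=0}^\infty \mathfrak{C}_{2,\,n}^+$.

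The second step is to invoke Proposition \ref{weirdtower}, which gives $\mathfrak{C}_{2,\,n}^+ \subseteq \mathcal{P}_n$ for each fixed $n$. Composing the two containments yields $\mathfrak{C}_3^+ \subseteq \mathfrak{C}_{2,\,n}^+ \subseteq \mathcal{P}_n$ for every $n$, and intersecting over all $n$ gives $\mathfrak{C}_3^+ \subseteq \bigcap_{n=0}^\infty \mathcal{P}_n$. The negative case is verbatim the same argument, replacing positive kinks by negative kinks, $\mathfrak{C}_{2,\,n}^+$ by $\mathfrak{C}_{2,\,n}^-$, and $\mathcal{P}_n$ by $\mathcal{N}_n$, using the negative halves of Corollary \ref{height3again} and Proposition \ref{weirdtower}.

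There is essentially no obstacle here, since all the real content has already been carried out: Proposition \ref{towersandgropes} produces the grope inside the Casson tower, Proposition \ref{weirdtower} blows up the positive base-level kinks to land the knot in $\mathcal{P}_n$, and the elementary observation behind Corollary \ref{height3again} is that a curve bounding a kinky disk off the core is null-homotopic there. The one point worth emphasizing is that a \emph{single} height-three tower works simultaneously for all $n$ — so no choice of 4-manifold or tower needs to depend on $n$ — which is precisely what makes the intersection over $n$ legitimate.
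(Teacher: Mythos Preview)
Your proof is correct and follows exactly the same route as the paper: the paper states this corollary as an immediate consequence of Corollary~\ref{height3again} ($\mathfrak{C}_3^\pm \subseteq \mathfrak{C}_{2,\,n}^\pm$ for all $n$) and Proposition~\ref{weirdtower} ($\mathfrak{C}_{2,\,n}^\pm \subseteq \mathcal{P}_n$ resp.\ $\mathcal{N}_n$), which is precisely the chain you spell out.
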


The results of this section constitute the various pieces of Theorem A.


\section{Characterization of knots in $\mathfrak{C}_1^\pm$}\label{c1+}

Knots in $\mathfrak{C}_1^\pm$ can be completely characterized by the following theorem. 

\begin{thm_B}For any knot $K$, the following statements are equivalent. 
\begin{itemize}
\item[(a)] $K\in \mathfrak{C}_1^+$ (resp. $\mathfrak{C}_1^-$)
\item[(b)] $K$ is concordant to a fusion knot of split positive (resp. negative) Hopf links
\item[(c)] $K$ is concordant to a knot which can be changed to a ribbon knot by changing only positive (resp. negative) crossings.
\end{itemize}
\end{thm_B}

\begin{remark}In \cite[Remark 3.3, Lemma 3.4]{CLic86}, Cochran--Lickorish showed that if a knot can be changed to the unknot by only changing positive (resp. negative) crossings, it bounds a kinky disk in the 4--ball with only positive (resp. negative) kinks---very little further insight is needed to prove the more general statement $(c)\Rightarrow (a)$. We include it here for completeness. 

This result should also be compared with a characterization of knots in a particular subset of $\mathcal{P}_0$ given by Cochran--Tweedy in \cite{CTwee13}. \end{remark}

\begin{figure}[ht!]
  \begin{center}
  \includegraphics{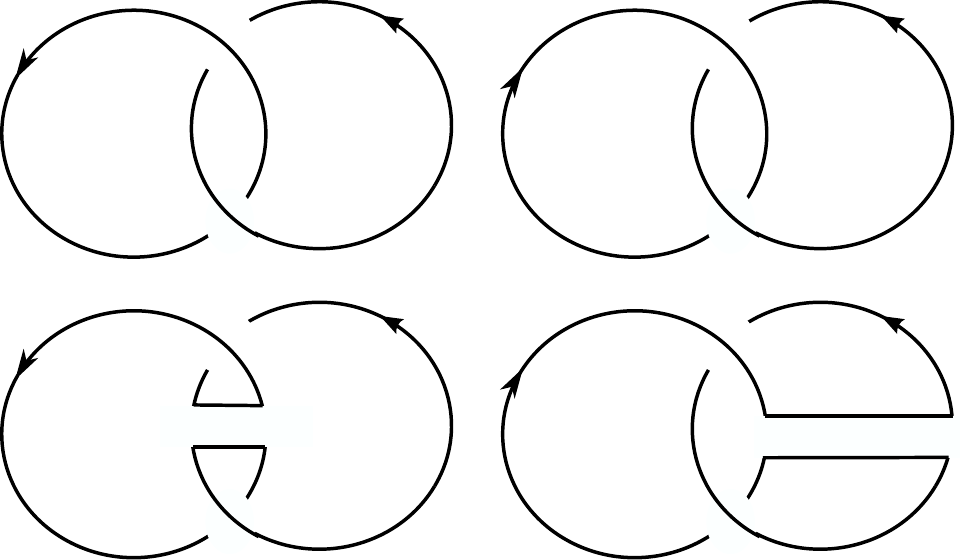}
  \caption{Both the positive (left) and the negative (right) Hopf link can be fused to yield the unknot.}\label{hopffusion}
  \end{center}
\end{figure}

\begin{proof}[Proof of Theorem B]Suppose $K\in \mathfrak{C}_1^+$, i.e.\ $K$ bounds a kinky disk $\overline{\Delta}$ in $B^4$ with all kinks positive. As before we blow up each kink of $\overline{\Delta}$ with a $\CP(2)$ to resolve the singularities of $\overline{\Delta}$. Remove a tubular neighborhood of the core $\CP(1)$ within each added $\CP(2)$. This results in a number of additional $S^3$ boundary components which intersect $\overline{\Delta}$ in positive Hopf links. We can tube these newly created $S^3$'s together. Since the tube acts like a 1--dimensional submanifold of a 4--manifold, it may be considered to be disjoint from $\overline{\Delta}$. We excise the tube; the resulting 4--manifold $W$ is diffeomorphic to $S^3 \times [0,1]$ where $K$ is contained in the $S^3\times \{1\}$ boundary component. By throwing away any additional components, we get a smooth genus zero surface $\Delta$ cobounded by $K$ and a split collection of positive Hopf links. (The Hopf links are split in the sense that they can be separated from one another by a collection of disjoint, smoothly embedded 2--spheres.) 

By an isotopy relative to the boundary, we can ensure that the height function on $W\cong S^3\times [0,1]$ restricts to a Morse function on $\Delta$ and that the maxima occur at the $t=\frac{4}{5}$ level, the join saddles at the $t=\frac{3}{5}$ level, the split saddles at $t=\frac{2}{5}$ and the minima at $t=\frac{1}{5}$. The intersection of $\Delta$ with $t=\frac{1}{2}$ is then a connected 1--manifold embedded in $S^3\times \{\frac{1}{2}\} \cong S^3$. Call this knot $J$. The portion of $\Delta$ in $S^3\times [\frac{1}{2},1]$ gives a concordance between $K$ and $J$. We will show that $J$ is a fusion of split positive Hopf links.

The portion of $\Delta$ in $S^3\times[0,\frac{1}{2}]$ is almost what we need already. In particular, it demonstrates $J$ as a fusion of an unlink (from the minima of $\Delta$) and a split collection of positive Hopf links. However, each component of the unlink can be considered as a fusion of a positive Hopf link, as shown in Figure \ref{hopffusion}. To be more specific, we can use an arc disjoint from $\Delta$ to extend each minimum down to $S^3 \times \{\epsilon\}$. Since the minima form an unlink we can keep them split from one another and the Hopf links. Within $S^3\times [0,\epsilon]$, we can use saddles to split the unknotted components into positive Hopf links. This shows that $J$ is a fusion of a collection of split positive Hopf links, and therefore (a)$\Rightarrow$(b).

Now suppose that $K$ is concordant to a fusion knot of split positive Hopf links. Since a positive Hopf link can be changed to an unlink by changing a positive crossing, (b)$\Rightarrow$(c) is clear.

Suppose that $K$ is concordant to a knot which can be changed to a ribbon knot by only changing positive crossings, i.e.\ there is a kinky annulus in $S^3\times [0,1]$, with only positive kinks, cobounded by $K$ and a ribbon knot $J$. By appending the slice disk for $J$, we get a kinky disk with only positive kinks bounded by $K$ in $B^4$.

The corresponding statements for $\mathfrak{C}_1^-$ can be proved by an entirely analogous argument. \end{proof}

Using an almost entirely identical argument, we can prove the following proposition.
\begin{proposition}For any knot $K$, the following statements are equivalent.
\begin{itemize}
\item[(a)] $K$ bounds a kinky disk with $p$ positive and $n$ negative kinks.
\item[(b)] $K$ is concordant to a fusion knot of $p$ positive Hopf links, $n$ negative Hopf links and an unlink
\item[(c)] $K$ is concordant to a knot that can be changed to a ribbon knot by changing $p$ positive and $n$ negative crossings.
\end{itemize}
\end{proposition}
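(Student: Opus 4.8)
The plan is to imitate the proof of Theorem B almost verbatim, simply tracking the signs of the kinks separately rather than assuming they are all of one sign. I would begin with the implication $(a)\Rightarrow(b)$. Suppose $K$ bounds a kinky disk $\overline{\Delta}$ in $B^4$ with $p$ positive and $n$ negative kinks. Blow up each positive kink with a copy of $\CP(2)$ and each negative kink with a copy of $\overline{\CP(2)}$; this resolves all the singularities of $\overline{\Delta}$. As in the proof of Theorem B, remove tubular neighborhoods of the core $\CP(1)$'s (or $\overline{\CP(1)}$'s), creating new $S^3$ boundary components which meet $\overline{\Delta}$ in positive Hopf links (at the positive kinks) and negative Hopf links (at the negative kinks). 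Tube all these $S^3$'s together, excise the tube, and observe that the resulting $4$--manifold is diffeomorphic to $S^3\times[0,1]$; after discarding extra components we obtain a smooth genus-zero surface $\Delta$ cobounded by $K$ and a split collection of $p$ positive and $n$ negative Hopf links. Arrange a Morse function on $\Delta$ with maxima, join saddles, split saddles, and minima at the levels $t=\tfrac45,\tfrac35,\tfrac25,\tfrac15$ respectively; the slice $J:=\Delta\cap(S^3\times\{\tfrac12\})$ is a knot concordant to $K$, and the lower half of $\Delta$ exhibits $J$ as a fusion of the split Hopf links together with an unlink coming from the minima. This directly gives statement $(b)$ (keeping the unlink rather than, as in Theorem B, reabsorbing each of its components into a positive Hopf link).

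For $(b)\Rightarrow(c)$, I would note that each positive Hopf link in the split collection becomes an unlink after changing one positive crossing, each negative Hopf link after changing one negative crossing, and the unlink component needs no change; performing a fusion on the resulting unlink produces a ribbon knot. Hence the fusion knot $J$ of $(b)$, which is concordant to $K$, can be changed to a ribbon knot by changing $p$ positive and $n$ negative crossings, and concordance of $K$ to $J$ gives $(c)$. For $(c)\Rightarrow(a)$, suppose $K$ is concordant to a knot $J$ that becomes a ribbon knot after changing $p$ positive and $n$ negative crossings. A crossing change of a positive (resp.\ negative) crossing can be realized by a positive (resp.\ negative) kink on an annulus in $S^3\times[0,1]$, so there is a kinky annulus with $p$ positive and $n$ negative kinks cobounded by $K$ and the ribbon knot; appending the ribbon (hence slice) disk for the ribbon knot, which is embedded, yields a kinky disk in $B^4$ bounded by $K$ with exactly $p$ positive and $n$ negative kinks.

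I do not expect any genuinely new obstacle here; the argument is a minor bookkeeping variant of Theorem B, as the paper itself anticipates ("Using an almost entirely identical argument"). The one point that needs slight care is the count of kinks: one must check that the blow-up/excision procedure in $(a)\Rightarrow(b)$ produces exactly one Hopf link of the appropriate sign per kink (no merging or cancellation of signs occurs, since the kinks of $\overline{\Delta}$ are isolated and the blow-ups are performed in disjoint balls), and symmetrically that the crossing-change-to-kink correspondence in $(c)\Rightarrow(a)$ is sign-preserving and injective on the nose. Once the sign accounting is pinned down, each implication is word-for-word the corresponding step of the proof of Theorem B with "positive" replaced by the appropriate mixture.
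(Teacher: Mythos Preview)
Your proposal is correct and is precisely the argument the paper intends: the paper itself offers no separate proof, only the remark that the proposition follows by an ``almost entirely identical argument'' to Theorem~B, and your write-up supplies exactly that argument with the appropriate sign bookkeeping. The one adjustment you highlight---retaining the unlink in part~(b) rather than reabsorbing its components into Hopf links---is the only substantive change from Theorem~B, and you have handled it correctly.
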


\subsection{Positivity of knots}
Theorem B involves several notions which might reasonably be referred to as `positivity' for knots. It is instructive to study how they are related to other such notions which are well-established in the literature. Let us start by listing some of these concepts.

\begin{enumerate}\label{positivitysection}
\item $K$ is the closure of a positive braid
\item $K$ has a projection where all crossings are positive 
\item $K$ is strongly quasipositive 
\item $K$ is quasipositive
\item $\kappa_-(K)=0$
\item $K$ bounds a kinky disk in $B^4$ with only positive kinks, i.e.\ $K\in\mathfrak{C}_1^+$
\item $K$ is concordant to a knot that can be changed to a slice knot by changing only positive crossings
\item $K$ is concordant to a fusion knot of a split collection of positive Hopf links
\item $K \in \mathcal{P}_0$
\end{enumerate}
In the list above, $\kappa_-$ denotes \textit{negative kinkiness}, a smooth concordance invariant defined by Gompf in \cite{Gom86}, which is equal to the minimum number of negative kinks in a kinky disk bounded by $K$ in the 4--ball. Similarly, the \textit{positive kinkiness} $\kappa_+$ of a knot $K$ is the least number of positive kinks in a kinky disk bounded by $K$ in the 4--ball, and the \textit{kinkiness} of $K$ is the ordered pair $(\kappa_+(K), \kappa_-(K))$.The terms quasipositivity and strong quasipositivity are due to Rudolph; see \cite{Rud05} for a thorough exposition. 
\begin{figure}[t!]
  \begin{center}
  \includegraphics[width=5in]{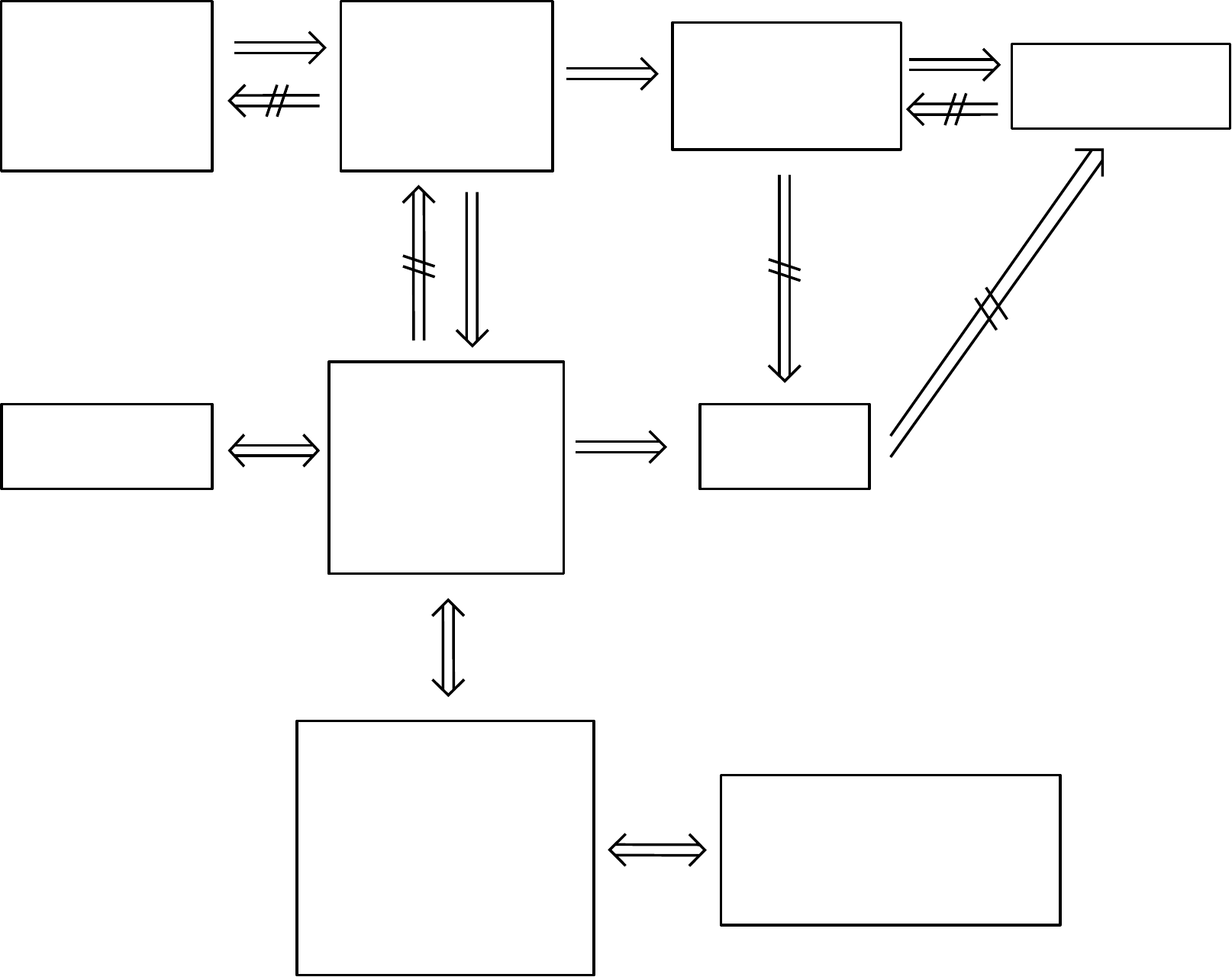}
  \put(-4.925,3.8){closure of a}
  \put(-4.8,3.6){positive}
  \put(-4.72,3.4){braid} 
  \put(-3.4,3.8){has a}
  \put(-3.45,3.6){positive}
  \put(-3.5,3.4){projection} 
  \put(-2.07,3.7){strongly}
  \put(-2.2,3.5){quasipositive}
  \put(-0.85,3.6){quasipositive}
  \put(-4.925,2.125){$\kappa_-(K)=0$}
  \put(-3.45,2.3){bounds a}
  \put(-3.5, 2.1){kinky disk}
  \put(-3.45,1.9){with only}
  \put(-3.6, 1.7){positive kinks} 
  \put(-2.075, 2.125){$K\in\mathcal{P}_0$}
  \put(-3.65,0.875){concordant to a}
  \put(-3.65,0.675){knot which can}
  \put(-3.6,0.475){be sliced by}
  \put(-3.625,0.275){changing only}
  \put(-3.725,0.075){positive crossings}
  \put(-1.85,0.68){concordant to a}
  \put(-1.8,0.48){fusion knot of}
  \put(-1.95,0.28){positive Hopf links}
  \caption{Known relationships between some notions of positivity of knots.}\label{positivity}
  \end{center}
\end{figure}

The known relationships between the above notions of positivity of knots are summarized in Figure \ref{positivity}. Item $(1)\Rightarrow$ Item $(2)$ trivially, but there are examples of knots with positive projections which are not closures of positive braids. Rudolph showed in \cite{Rud99} that knots with positive projections are strongly quasipositive. Strongly quasipositive knots are obviously quasipositive by definition. However, Baader showed in \cite{Baa05} that there exist quasipositive knots that are not strongly quasipositive\footnote{These examples were pointed out by Steven Sivek in response to a question posed by the author on MathOverflow \cite{MO}.}.

Items (5) and (6) are equivalent by the definition of $\kappa_-$ and items $(6)$, $(7)$ and $(8)$ are equivalent by Theorem B. Item $(5)$ implies item $(9)$ as discussed previously, by `blowing up' at the kinks of a kinky disk. Any knot with a positive projection bounds a kinky disk with only positive kinks; this is so since it can be unknotted by changing only positive crossings. Therefore, item $(2)$ implies item $(6)$. However, it is known that knots with positive projections necessarily have (strictly) negative signatures \cite{CGom88,Przy89,Trac88} while knots (such as the figure eight knot) with zero signature may bound kinky disks with only positive kinks. As a result, item $(6)$ does not imply item $(2)$.

Rudolph showed that one can construct a strongly quasipositive knot with any given Seifert pairing \cite{Rud83,Rud05}. This implies that we can find strongly quasipositive knots with positive signature, which obstructs membership in $\mathcal{P}_0$ by \cite[Proposition 1.2]{CHHo13}. Membership in $\mathcal{P}_0$ does not imply strong quasipositivity, or even quasipositivity. As pointed out in \cite[Remark 4.6]{Rud89}, a non-slice knot which is its own mirror image (such as the figure eight knot, which lies in $\mathcal{P}_0$) cannot be quasipositive. On the other hand, it is true that if $K$ is strongly quasipositive, then $K\notin\mathcal{N}_0$, as follows. Livingston proved in \cite{Liv04}\footnote{Livingston's result is not stated in terms of strong quasipositivity. The equivalence of Livingston's conditions and strong quasipositivity is pointed out by Hedden in the introduction to \cite{He10}} that if $K$ is strongly quasipositive, then $\text{g}(K)=\text{g}_4(K)=\tau(K)$, where $\text{g}_4$ denotes smooth 4--genus and $\tau$ denotes Ozsv\'{a}th--Szab\'{o} \cite{Os03} and Rasmussen's \cite{Ras03} smooth concordance invariant. Therefore, any non-trivial, strongly quasipositive $K$ has $\tau(K)>0$, which obstructs membership in $\mathcal{N}_0$ by \cite[Proposition 1.2]{CHHo13}. Collectively this paragraph addresses a question posed in Section 3 of \cite{CHHo13}, seeking the relationship between strong quasipositivity and $\mathcal{P}_0$.

The relationships summarized in Figure \ref{positivity} lead to the natural question of whether membership in $\mathcal{P}_0$ implies any of the equivalent notions $(5)$--$(8)$. This seems unlikely to be true, but we do not have a counterexample at present. 
\section{Examples and properties}\label{exprop}

\begin{example}\label{allc1}It is well-known that any knot can be changed to the unknot by changing crossings (the minimum number of crossings that need to be changed is the \textit{unknotting number} of a knot). By tracing the homotopy corresponding to the crossing changes, we see that \textit{every} knot bounds a kinky disk if we impose no restrictions on the signs of the kinks, i.e.\ every knot lies in $\mathfrak{C}_1$.\end{example}

\begin{example}\label{intersection}Theorem B shows that membership in $\mathfrak{C}_1^\pm$ is harder. From Proposition 1.2 in \cite{CHHo13} we know that the signs of various well-known concordance invariants obstruct membership in $\mathcal{P}_0$ and $\mathcal{N}_0$. Since $\mathfrak{C}_1^+\subseteq\mathcal{P}_0$ and $\mathfrak{C}_1^-\subseteq\mathcal{N}_0$, they also obstruct membership in $\mathfrak{C}_1^+$ and $\mathfrak{C}_1^-$. For example, if $\tau(K)<0$, $K\notin\mathcal{P}_0$, and therefore,  $K\notin\mathfrak{C}_1^+$. Similarly, $K\notin\mathfrak{C}_1^+$ if the Levine--Tristram signature of $K$ is strictly positive, or $\text{s}\,(K)<0$. Using Theorem B, we can then see that the signs of these invariants also obstruct when a knot can be changed to a slice knot by changing only positive or negative crossings. Results of this nature were proved by Cochran--Lickorish and Bohr in \cite{CLic86} and \cite{Bohr02} respectively. \end{example}

\begin{figure}[t!]
  \begin{center}
  \includegraphics[width=1.8in]{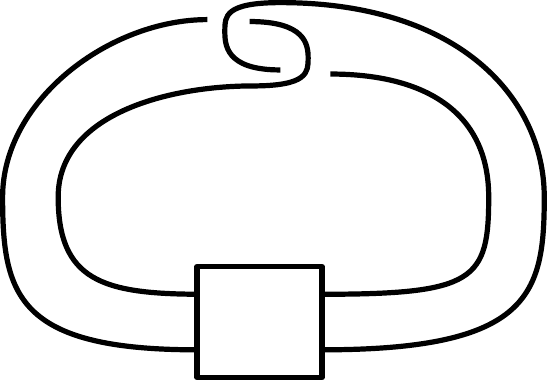}
  \put(-1,0.15){$n$}
  \caption{The knot $T_n^-$. The box with a number `$n$' inside should be interpreted as $n$ full twists.}\label{twistknots}
  \end{center}
\end{figure}

\begin{example} By Theorem B, any knot which can be changed to a slice knot by changing positive (resp. negative) crossings lies in $\mathfrak{C}_1^+$ (resp. $\mathfrak{C}_1^-$). In particular this implies that any knot with unknotting number one, or even \textit{slicing number} one, lies in either $\mathfrak{C}_1^+$ or $\mathfrak{C}_1^-$ (the slicing number of a knot is the minimum number of crossing changes needed to change it to a slice knot).

Let $T^+_n$ denote the positively-clasped twist knots with $n$ twists and $T^-_n$ the negatively-clasped twist knots (see Figure \ref{twistknots}). Clearly, each $T^\pm_n$ can be unknotted by changing one of the crossings at the clasp and therefore, $T^+_n\in\mathfrak{C}_1^+$ and $T^-_n\in\mathfrak{C}_1^-$ for all $n$. On the other hand, for positive $n$, the knot $T^+_n$ can be unknotted by changing $n$ negative crossings (undoing the $n$ twists) and therefore, $T_n^+\in\mathfrak{C}_1^+\cap\mathfrak{C}_1^-$ for positive $n$. Similarly, $T_n^-\in\mathfrak{C}_1^+\cap\mathfrak{C}_1^-$ for negative $n$. Note that it is easy to see that $T^+_n$ is a fusion of a positive Hopf link. However, since $T^+_n\in\mathfrak{C}_1^-$ for positive $n$, such a knot must also be concordant to a fusion of negative Hopf links by Theorem B---an example is shown in Figure \ref{twistknotasfusion}.\end{example}

\begin{figure}[t!]
  \begin{center}
  \includegraphics[width=4in]{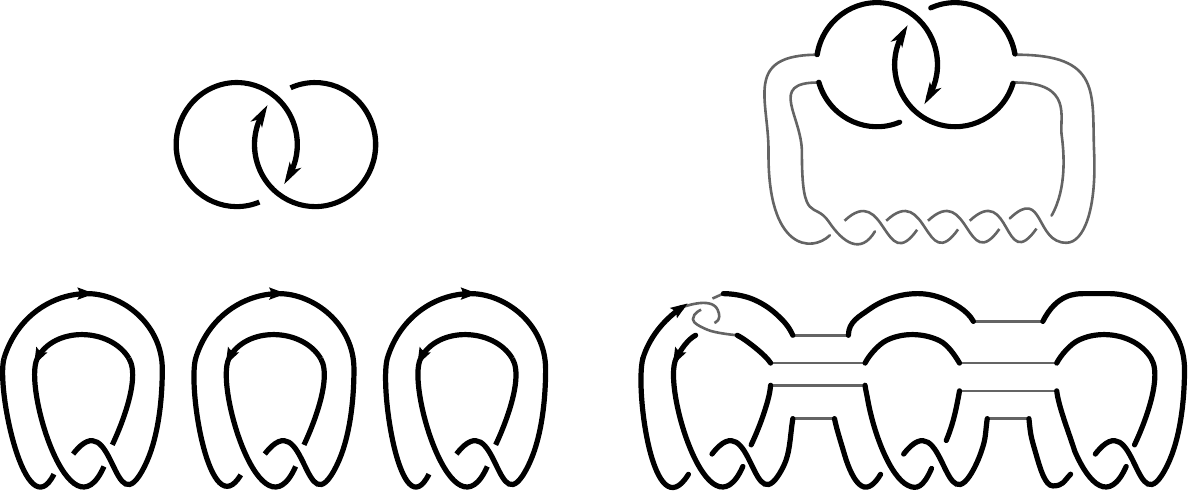}
  \caption{The knot $T^+_3$ can be obtained as a fusion of a single positive Hopf link, or as a fusion of three negative Hopf links. Fusion bands are shown in gray.}\label{twistknotasfusion}
  \end{center}
\end{figure}

\begin{example}Example 4.5 of \cite{CHHo13} shows that $\Wh[-]_0(LHT)\notin\mathcal{P}_0$, where $LHT$ is the left-handed trefoil and $\Wh[-]_0(\cdot)$ denotes the negatively clasped zero-twisted Whitehead double. Similarly Example 4.6 in \cite{CHHo13} shows that if $p<0$, $q>0$, and $r>0$ are odd and $pq+qr+rp=-1$, then the pretzel knots $K(p,q,r)\notin\mathcal{P}_0$. Therefore, since $\mathfrak{C}_1^+\subseteq\mathcal{P}_0$, none of these knots can bound a kinky disk with only positive kinks and by Theorem B none of these knots can be changed to a slice knot by changing only positive crossings.  \end{example}

Example \ref{intersection} showed that $\mathfrak{C}_1^+$ and $\mathfrak{C}_1^-$ have non-trivial intersection. However, they are distinct sets, as we see below. 

\begin{proposition}$\mathfrak{C}_1^+\neq\mathfrak{C}_1^-$.\end{proposition}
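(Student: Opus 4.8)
The plan is to produce a knot $K$ that lies in $\mathfrak{C}_1^+$ but provably not in $\mathfrak{C}_1^-$, using the obstructions to membership in $\mathcal{N}_0$ recorded above together with the inclusion $\mathfrak{C}_1^-\subseteq\mathcal{N}_0$. The natural candidate is the right-handed trefoil $K$ (equivalently, any knot that can be unknotted by changing a single positive crossing and has a positive concordance invariant). First I would observe that $K$ bounds a kinky disk in $B^4$ with a single positive kink: changing one positive crossing unknots $K$, and tracing the corresponding homotopy as in Example \ref{allc1} yields a kinky disk whose single kink has positive sign; hence $K\in\mathfrak{C}_1^+$. (Alternatively, the right-handed trefoil is a fusion of a single positive Hopf link, so $K\in\mathfrak{C}_1^+$ by Theorem B.)

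The second step is to rule out $K\in\mathfrak{C}_1^-$. Since $\mathfrak{C}_1^-\subseteq\mathcal{N}_0$, it suffices to show $K\notin\mathcal{N}_0$. For this I would invoke \cite[Proposition 1.2]{CHHo13}: the sign of a suitable concordance invariant obstructs membership in $\mathcal{N}_0$. The right-handed trefoil is strongly quasipositive and nontrivial, so by the discussion preceding this proposition (via Livingston's theorem \cite{Liv04} and Hedden \cite{He10}, $\tau(K)=\text{g}_4(K)=\text{g}(K)=1>0$), and $\tau(K)>0$ obstructs membership in $\mathcal{N}_0$. One could equally use that the Levine--Tristram signature of the right-handed trefoil is negative, or that $s(K)>0$; any of these sign obstructions does the job. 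Therefore $K\notin\mathcal{N}_0$, so $K\notin\mathfrak{C}_1^-$, while $K\in\mathfrak{C}_1^+$, which establishes $\mathfrak{C}_1^+\neq\mathfrak{C}_1^-$.

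The only mild subtlety — and the step I would be most careful about — is the sign bookkeeping: confirming that the kink produced by the unknotting homotopy of the \emph{positive} crossing is indeed a \emph{positive} kink (so that $K\in\mathfrak{C}_1^+$ rather than $\mathfrak{C}_1^-$), and simultaneously that the chosen concordance invariant has the sign that obstructs $\mathcal{N}_0$ rather than $\mathcal{P}_0$. Both orientations are fixed once one commits to the right-handed trefoil, and the cited results (\cite{CLic86} for the crossing-change/kinky-disk correspondence, \cite[Proposition 1.2]{CHHo13} for the sign obstruction, and Theorem B for the fusion description) pin down all signs consistently. No further computation is needed.
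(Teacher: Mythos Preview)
Your argument is correct, and the right-handed trefoil is exactly the kind of example the paper has in mind: it is a knot with a positive projection, hence in $\mathfrak{C}_1^+$, and the paper's own discussion in Section~\ref{positivitysection} records (via Livingston/Hedden) that any nontrivial strongly quasipositive knot has $\tau>0$ and thus lies outside $\mathcal{N}_0$. So your proof fits squarely within the paper's framework.

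The paper's proof differs only in the obstruction it invokes for $K\notin\mathfrak{C}_1^-$: rather than passing through the inclusion $\mathfrak{C}_1^-\subseteq\mathcal{N}_0$ and a $\tau$-computation, it cites Bohr's result \cite[Corollary~2]{Bohr02} that any knot concordant to a nontrivial strongly quasipositive knot has $\kappa_+>0$, which is literally the statement $K\notin\mathfrak{C}_1^-$. This is marginally more direct (no detour through $\mathcal{N}_0$) and yields the whole family of knots with positive projections in $\mathfrak{C}_1^+\setminus\mathfrak{C}_1^-$ at once; your approach, by contrast, is more self-contained and makes the single example completely explicit. The paper also offers an alternative via Gompf's knots with kinkiness $(0,n)$, $n\neq 0$, which lie in $\mathfrak{C}_1^-\setminus\mathfrak{C}_1^+$. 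Either route suffices.
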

\begin{proof}Corollary 2 of \cite{Bohr02} shows that if $K$ is concordant to a non-trivial strongly quasipositive knot, then $\kappa_+(K)>0$. This implies that $K\notin\mathfrak{C}_1^-$. However, several strongly quasipositive knots are in $\mathfrak{C}_1^+$. For instance, any knot which is a closure of a positive braid (and therefore contained in $\mathcal{P}_0$) is strongly quasipositive. In fact, Rudolph showed that any knot with a positive projection is strongly quasipositive \cite{Rud99}. This shows that all knots with positive projections are in $\mathfrak{C}_1^+ - \mathfrak{C}_1^-$.

Alternatively, Gompf showed that there exist non-trivial knots with kinkiness $(0,n)$, with $n\neq 0$ in \cite{Gom86} (see Section \ref{positivitysection} for a definition of kinkiness). These knots are clearly in $\mathfrak{C}_1^- - \mathfrak{C}_1^+$. \end{proof}

There also exist knots which are is neither $\mathfrak{C}_1^+$ nor $\mathfrak{C}_1^-$, as follows. Recall that $\mathfrak{C}_1=\mathcal{C}$.

\begin{proposition}$\mathfrak{C}_1 \neq \mathfrak{C}_1^+\cup\mathfrak{C}_1^-$.\end{proposition}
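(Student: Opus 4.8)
The plan is to exhibit a single knot $K$ that lies in neither $\mathfrak{C}_1^+$ nor $\mathfrak{C}_1^-$. Since $\mathfrak{C}_1^+\subseteq\mathcal{P}_0$ and $\mathfrak{C}_1^-\subseteq\mathcal{N}_0$ (shown earlier in the paper), it suffices to find a knot $K\notin\mathcal{P}_0\cup\mathcal{N}_0$. The cleanest way to do this is to invoke \cite[Proposition 1.2]{CHHo13}, which says that the signs of classical concordance invariants obstruct membership in $\mathcal{P}_0$ and $\mathcal{N}_0$: for instance, if $\tau(K)>0$ then $K\notin\mathcal{N}_0$, and if $\tau(K)<0$ then $K\notin\mathcal{P}_0$. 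Unfortunately $\tau$ alone cannot obstruct both simultaneously. Instead I would use two \emph{different} invariants with opposite-sign obstructions, or—more robustly—use a connected sum.

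The concrete approach I favor is to take $K = T \# \overline{T}'$ (or a similar combination) where $T$ and $T'$ are chosen so that one summand forces a positive-sign obstruction and the other forces a negative-sign obstruction, using invariants that behave additively (such as the Levine--Tristram signature function $\sigma_\omega$, or $\tau$, or the Rasmussen $s$-invariant). For example, one can take a knot $K$ with $\sigma_\omega(K) > 0$ for some $\omega$ (obstructing $\mathcal{P}_0$) \emph{and} $\tau(K) > 0$ (obstructing $\mathcal{N}_0$)—or indeed simpler still, recall that by \cite[Proposition 1.2]{CHHo13} a strictly positive Levine--Tristram signature at some root of unity obstructs $\mathcal{P}_0$ while a strictly negative one at some (possibly different) root of unity obstructs $\mathcal{N}_0$. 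So it is enough to produce a knot whose Levine--Tristram signature function takes both a strictly positive and a strictly negative value. The connected sum $3_1 \# \overline{4_1}$ or a twisted double would not obviously work; a safer choice is an algebraically well-understood knot like $K = T_{2,3}\#\, T_{2,-5}$ or a pretzel knot whose signature function changes sign — one checks directly from the Seifert matrix that $\sigma_\omega$ is positive for $\omega$ near $1$ on one side and negative elsewhere.

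The main obstacle is purely bookkeeping: one must verify that the chosen $K$ genuinely has the required sign behavior for \emph{some} pair of invariant-values, and that the relevant statement of \cite[Proposition 1.2]{CHHo13} applies in the form needed (i.e.\ that a single strictly-positive value of an appropriate invariant obstructs $\mathcal{P}_0$ and a single strictly-negative value obstructs $\mathcal{N}_0$, with no global-definiteness hypothesis beyond what is automatic). Once an explicit knot is pinned down, the verification is a finite computation with a Seifert matrix (or a table lookup for small knots), and no further geometry is needed. I would therefore organize the proof as: (1) recall $\mathfrak{C}_1^+\subseteq\mathcal{P}_0$ and $\mathfrak{C}_1^-\subseteq\mathcal{N}_0$; (2) recall the relevant sign obstructions from \cite[Proposition 1.2]{CHHo13}; (3) exhibit one explicit knot realizing a positive obstruction to $\mathcal{P}_0$ and a negative obstruction to $\mathcal{N}_0$ (e.g.\ via its Levine--Tristram signature function or a combination of $\tau$ and $\sigma$); (4) conclude that this knot lies in $\mathfrak{C}_1\setminus(\mathfrak{C}_1^+\cup\mathfrak{C}_1^-)$, so $\mathfrak{C}_1\neq\mathfrak{C}_1^+\cup\mathfrak{C}_1^-$.
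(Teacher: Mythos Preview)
Your overall strategy---reduce to showing some $K\notin\mathcal{P}_0\cup\mathcal{N}_0$ via the sign obstructions of \cite[Proposition~1.2]{CHHo13}---is sound, and it is \emph{different} from the paper's argument. The paper argues asymmetrically: it invokes Rudolph's result that strongly quasipositive knots realize any Seifert form, together with Bohr's bound \cite[Corollary~2]{Bohr02} that a nontrivial strongly quasipositive knot has $\kappa_+>0$ (hence lies outside $\mathfrak{C}_1^-$), and then uses a positive Levine--Tristram signature to exclude $\mathcal{P}_0\supseteq\mathfrak{C}_1^+$. Your approach avoids Rudolph and Bohr entirely at the cost of needing a single knot obstructed from \emph{both} $\mathcal{P}_0$ and $\mathcal{N}_0$; the paper's route only needs an obstruction to $\mathcal{P}_0$ because the $\mathfrak{C}_1^-$ side is handled geometrically.

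There is, however, a genuine gap at step~(3): your suggested candidate $T_{2,3}\#T_{2,-5}$ does not work. A direct computation shows $\sigma_\omega(T_{2,3}\#T_{2,-5})\in\{0,+2\}$ for all $\omega$, while $\tau=-1$ and $s=-2$; every one of these obstructs $\mathcal{P}_0$ and none obstructs $\mathcal{N}_0$. The issue is that for connected sums of torus knots the invariants $\sigma_\omega$, $\tau$, and $s$ tend to move in lockstep, so you must choose more carefully. One fix within your framework is to force the signature function to change sign, e.g.\ $K=T_{2,5}\#\overline{T_{2,3}}\#\overline{T_{2,3}}$ has $\sigma_\omega(K)=-2$ for $\omega=e^{i\theta}$ with $\theta\in(\pi/5,\pi/3)$ and $\sigma_\omega(K)=+2$ for $\theta\in(\pi/3,3\pi/5)$, obstructing both $\mathcal{N}_0$ and $\mathcal{P}_0$. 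Alternatively, decouple the invariants by introducing a topologically slice summand with $\tau\neq 0$: if $D=\Wh^+_0(T_{2,3})$ then $K=D\#D\#\overline{T_{2,3}}$ has $\tau(K)=1>0$ (obstructing $\mathcal{N}_0$) and $\sigma(K)=+2>0$ (obstructing $\mathcal{P}_0$). Either of these completes your argument; just be sure to actually carry out the verification rather than leaving it as ``bookkeeping''.
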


\begin{proof}As we saw above, by \cite[Corollary 2]{Bohr02}, any strongly quasipositive knot $K$ has $\kappa_+(K)>0$ and therefore $K\notin\mathfrak{C}_1^-$. However, Rudolph showed in \cite{Rud83,Rud05} that one can construct a strongly quasipositive knot with any given Seifert pairing. As a result, we can find a strongly quasipositive knot $K$ with positive Levine--Tristram signature. By Proposition 1.2 of \cite{CHHo13}, $K\notin\mathcal{P}_0$ and therefore, $K\notin\mathfrak{C}_1^+$. \end{proof}

Clearly, any of the knots guaranteed by the above proposition must have both $\kappa_+(K)$ and $\kappa_-(K)$ non-zero and in fact, this condition characterizes all knots in $\mathfrak{C}_1 - \left(\mathfrak{C}_1^+\cup\mathfrak{C}_1^-\right)$.

\begin{proposition}\label{2neq1}$\mathfrak{C}_2\neq\mathfrak{C}_1$, $\mathfrak{C}_2^+\neq\mathfrak{C}_1^+$ and $\mathfrak{C}_2^-\neq\mathfrak{C}_1^-$.\end{proposition}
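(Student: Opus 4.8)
The plan is to exhibit an explicit knot $K$ that bounds a kinky disk in $B^4$ (so $K \in \mathfrak{C}_1 = \mathcal{C}$, as noted in Example \ref{allc1}) but which cannot bound any Casson tower of height two, and to choose $K$ so that the same obstruction rules out the signed versions. The natural source of an obstruction is Corollary 1, which states that any knot in $\mathfrak{C}_2$ has vanishing Arf invariant; equivalently, the inclusion $\mathfrak{C}_2 \subseteq \mathcal{F}_0$ together with the well-known fact that $\mathcal{F}_0$ consists of knots with trivial Arf invariant. So the key observation is simply that \emph{any} knot with $\Arf(K) = 1$ lies in $\mathfrak{C}_1 \setminus \mathfrak{C}_2$.

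First I would take $K$ to be the trefoil, or more generally any knot with $\Arf(K) = 1$ that also has unknotting number one. The trefoil works: it has $\Arf = 1$, and it can be unknotted by a single crossing change. By Example \ref{allc1} (or directly, by tracing the homotopy of the crossing change) $K$ bounds a kinky disk in $B^4$, so $K \in \mathfrak{C}_1$; and in fact, since the relevant crossing change is of a definite sign, $K \in \mathfrak{C}_1^{+}$ (for the right-handed trefoil, say) — this is exactly the kind of statement recorded for twist knots in the examples, and it follows from Theorem B part (c) since the trefoil can be changed to the unknot, hence to a ribbon knot, by changing one crossing. On the other hand, by Corollary 1, $K \notin \mathfrak{C}_2$, and since $\mathfrak{C}_2^{\pm} \subseteq \mathfrak{C}_2$, we get $K \notin \mathfrak{C}_2^{+}$ and $K \notin \mathfrak{C}_2^{-}$ as well. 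This simultaneously separates $\mathfrak{C}_1$ from $\mathfrak{C}_2$, and separates $\mathfrak{C}_1^{+}$ from $\mathfrak{C}_2^{+}$: the right-handed trefoil is in $\mathfrak{C}_1^{+}$ but not $\mathfrak{C}_2^{+}$. The mirror (left-handed) trefoil then handles $\mathfrak{C}_1^{-} \neq \mathfrak{C}_2^{-}$ by the symmetric argument.

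The only mild subtlety is verifying that the trefoil really does lie in $\mathfrak{C}_1^{+}$ (respectively $\mathfrak{C}_1^{-}$) rather than merely in $\mathfrak{C}_1$ — i.e., that the single crossing change needed can be taken to be positive. For the right-handed trefoil one unknots by changing one of its (three) positive crossings to a negative one; tracing this homotopy produces a kinky disk with a single positive kink, so $K \in \mathfrak{C}_1^{+}$. Alternatively one can invoke Theorem B directly. I expect this to be essentially immediate once the sign bookkeeping is done carefully, and it is the only place the argument requires any real care; everything else is a direct appeal to Corollary 1 and the containments $\mathfrak{C}_{n+1}^{\pm} \subseteq \mathfrak{C}_n^{\pm}$, $\mathfrak{C}_n^{\pm} \subseteq \mathfrak{C}_n$ already recorded in Section \ref{defns}. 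One could also phrase the whole thing more robustly using the $n$-solvable filtration: $\mathfrak{C}_2 \subseteq \mathcal{F}_0$ by Theorem A(i), and $\mathcal{F}_0$ is contained in the kernel of the Arf invariant, so no knot with $\Arf = 1$ is in $\mathfrak{C}_2$.
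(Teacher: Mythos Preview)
Your proposal is correct and follows essentially the same approach as the paper: use Corollary 1 (the Arf invariant obstruction to membership in $\mathfrak{C}_2$) on an unknotting-number-one knot with $\Arf=1$. The only difference is cosmetic: the paper uses the figure eight knot $4_1$, which is amphichiral and can be unknotted by either a single positive or a single negative crossing change, so one knot simultaneously witnesses $4_1\in\mathfrak{C}_1^+\cap\mathfrak{C}_1^-$ and $4_1\notin\mathfrak{C}_2\supseteq\mathfrak{C}_2^\pm$; you instead use the right-handed trefoil for the $+$ case and its mirror for the $-$ case, which works just as well.
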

\begin{proof}The figure eight knot $4_1$ is contained in both $\mathfrak{C}_1^+$ and $\mathfrak{C}_1^-$ since it can be unknotted by changing a single positive or negative crossing. However, we know that $\text{Arf}(4_1)\neq 0$ and so by Corollary 1, it cannot bound a Casson tower of height two. Therefore, $4_1\notin \mathfrak{C}_2$. Since $\mathfrak{C}_2^\pm\subseteq\mathfrak{C}_2$ the result follows.

Of course, any knot with $\text{Arf}\,(K)=1$ lies in $\mathfrak{C}_1 - \mathfrak{C}_2$ by Corollary 1, since $\mathfrak{C}_1=\mathcal{C}$. Similarly, any knot $K$ with $\text{Arf}(K)=1$ and unknotting number one lies in either $\mathfrak{C}_1^+ - \mathfrak{C}_2^+$ or $\mathfrak{C}_1^- - \mathfrak{C}_2^-$.\end{proof}

The above result shows that while the figure eight knot bounds a kinky disk with a single positive (resp. negative) kink, it cannot be extended to a Casson tower of height two. In fact, by Corollary 1, the figure eight knot does not bound \textit{any} height two Casson tower, regardless of the number (and sign) of kinks at the first stage.

\begin{corollary}$\mathfrak{C}_{2,\,0}^+\equiv\mathfrak{C}_2^+\neq\mathcal{P}_0$. Similarly, $\mathfrak{C}_{2,\,0}^-\equiv\mathfrak{C}_2^-\neq\mathcal{N}_0$.\end{corollary}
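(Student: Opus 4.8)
The plan is to prove the proper inclusions by exhibiting a single knot lying in $\mathcal{P}_0$ but not in $\mathfrak{C}_2^+$ (and symmetrically in $\mathcal{N}_0$ but not in $\mathfrak{C}_2^-$). Since Theorem A(iii)--(iv) with $n=0$ already give $\mathfrak{C}_2^+ \subseteq \mathcal{P}_0$ and $\mathfrak{C}_2^- \subseteq \mathcal{N}_0$, producing such a knot suffices. The natural candidate is the figure eight knot $4_1$, exactly as in the proof of Proposition \ref{2neq1}.

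First I would recall that $4_1$ can be unknotted by changing a single positive crossing, and — since $4_1$ is amphichiral — also by changing a single negative crossing, so $4_1 \in \mathfrak{C}_1^+ \cap \mathfrak{C}_1^-$; tracing the unknotting homotopy yields a kinky disk in $B^4$ with one positive (resp. negative) kink. Blowing up that kink with a copy of $\mathbb{CP}(2)$ (resp. $\overline{\mathbb{CP}(2)}$), as described in the introduction, produces a slice disk for $4_1$ in a simply connected $4$--manifold with positive (resp. negative) definite intersection form, so $4_1 \in \mathcal{P}_0$ (resp. $4_1 \in \mathcal{N}_0$).

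Next I would invoke $\Arf(4_1) = 1 \neq 0$ together with Corollary 1: any knot bounding a Casson tower of height two has trivial Arf invariant, hence $4_1 \notin \mathfrak{C}_2$. Since $\mathfrak{C}_2^\pm \subseteq \mathfrak{C}_2$, this gives $4_1 \notin \mathfrak{C}_2^+$ and $4_1 \notin \mathfrak{C}_2^-$. Combining with the previous paragraph, $4_1 \in \mathcal{P}_0 \setminus \mathfrak{C}_{2,\,0}^+$ and $4_1 \in \mathcal{N}_0 \setminus \mathfrak{C}_{2,\,0}^-$, which is the assertion.

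There is no genuine obstacle here: the corollary is an immediate consequence of Theorem A and the Arf-invariant obstruction of Corollary 1, and Proposition \ref{2neq1} already carries out the only substantive point (that $4_1 \notin \mathfrak{C}_2$). The only thing warranting a moment's care is the bookkeeping of signs — ensuring the positive-kink disk for $4_1$ is blown up with $\mathbb{CP}(2)$ rather than $\overline{\mathbb{CP}(2)}$, so that one lands in $\mathcal{P}_0$ and not merely in $\mathcal{N}_0$ — but the amphichirality of $4_1$ renders even this symmetric.
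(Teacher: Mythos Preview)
Your proof is correct and follows essentially the same route as the paper: the paper simply cites the previous proposition (which already shows $4_1\in\mathfrak{C}_1^\pm\setminus\mathfrak{C}_2^\pm$) together with the inclusion $\mathfrak{C}_1^\pm\subseteq\mathcal{P}_0$ (resp.\ $\mathcal{N}_0$), whereas you unpack that inclusion explicitly via the blow-up description. The example and the Arf obstruction are identical.
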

\begin{proof}This follows immediately from the previous proposition since $\mathfrak{C}_1^+\subseteq\mathcal{P}_0$ and $\mathfrak{C}_1^-\subseteq\mathcal{N}_0$. \end{proof}

Recall that $T^\pm_n$ denotes the twist knot with $n$ twists, where the superscript denotes the sign of the clasp (see Figure \ref{twistknots}).

\begin{proposition}\label{height2ex}For even $n$, $T^+_n\in\mathfrak{C}^+_{2,\,0}$ and $T^-_n\in\mathfrak{C}^-_{2,\,0}$.\end{proposition}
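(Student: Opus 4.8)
The plan is to exhibit, for each even $n$, an explicit Casson tower of height two bounded by $T^+_n$ (and symmetrically for $T^-_n$) whose base-level kink is positive. Recall that $\mathfrak{C}^+_{2,\,0}\equiv\mathfrak{C}_2^+$ by definition, so there is no derived-series condition to check beyond constructing an honest height-two Casson tower with all first-stage kinks positive. The starting observation is that each twist knot $T^+_n$ bounds an obvious \emph{kinky disk} in $B^4$ with a single positive kink: changing the positive crossing at the clasp unknots $T^+_n$, and tracing the corresponding homotopy gives a kinky disk $\overline{\Delta}$ in $B^4$ whose single self-intersection is positive. This already shows $T^+_n\in\mathfrak{C}_1^+$ for all $n$ (as noted earlier in the paper), so the real content is to extend this base-level kinky disk to a height-two tower, i.e.\ to attach a second-stage kinky handle along the standard curve of the base kinky handle with the correct framing.

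The key step is to identify the standard curve $\gamma$ for the base-level kinky handle and show that, for $n$ even, $\gamma$ bounds a kinky disk in $B^4\setminus(\text{core of base stage})$ with the appropriate framing. Concretely I would draw a Kirby diagram for the regular neighborhood $T_1$ of $\overline{\Delta}$: it consists of a single dotted circle (coming from the positive kink) with the attaching curve clasping through it, and the $0$-framed longitude of the attaching curve is identified with the $0$-framed longitude of $T^+_n$. The standard curve $\gamma$ is the $0$-framed meridian of that dotted circle. One then computes what knot $\gamma$ represents in $S^3$ once we reincorporate the full-twist box of $T^+_n$ into the picture. The point of the hypothesis ``$n$ even'' is that the framing/self-linking bookkeeping forces $\gamma$ to be a knot with an even amount of twisting, which can be unknotted (in the complement of the core) by an appropriate sequence of crossing changes, hence bounds a kinky handle to be attached as the second stage. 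I would verify the framing condition by the criterion recalled in Section~\ref{defns}: attaching a $2$-handle along $\gamma$ with that framing gives intersection form zero, equivalently $\gamma$ and its framed pushoff cobound disjoint embedded surfaces inside the second-stage region.

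The main obstacle I expect is the framing computation — keeping track of the canonical framing on the base-level attaching curve versus the normal framing of the kinky disk (these differ by twice the signed self-intersection number, i.e.\ by $2$ here), and then checking that the framing inherited by $\gamma$ is precisely the one needed so that a kinky handle can legitimately be attached. This is exactly the kind of $\pm 2$-versus-parity issue that makes the ``$n$ even'' hypothesis necessary, and getting the sign conventions consistent with Figures~\ref{kinkyex} and~\ref{height2tower} is where care is required. Once the framing matches, attaching any kinky handle along $\gamma$ (the number and sign of its kinks being irrelevant for membership in $\mathfrak{C}^+_{2,\,0}$) completes a height-two Casson tower bounded by $T^+_n$ with its sole base-level kink positive, giving $T^+_n\in\mathfrak{C}^+_{2,\,0}$. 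The statement for $T^-_n$ with $n$ even follows by applying the identical construction to the mirror image, or equivalently by reflecting every crossing and clasp in the diagrams, which turns the positive base kink into a negative one and lands the knot in $\mathfrak{C}^-_{2,\,0}$.
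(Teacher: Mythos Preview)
Your framework is right: $T^+_n$ bounds a first-stage kinky disk $D_1$ with one positive kink (from the clasp), the standard curve $\alpha$ must then bound a second-stage kinky handle with matching framing, and the framing is where ``$n$ even'' enters. But the key step is misdescribed. The standard curve $\alpha$ is not a knot that needs unknotting --- it is the unknotted core of the twist knot and visibly bounds an \emph{embedded} disk $\widetilde{D_2}$ on the radially inward side of $B^4$, disjoint from $D_1$. The entire obstruction is that the framing $\widetilde{D_2}$ induces on $\alpha$ differs from the required one by exactly $n=2k$ (if it did not, $T^+_n$ would be slice). So the task is not ``find a kinky disk and then check the framing'' but rather ``correct the framing of an already-existing embedded disk''; your line ``unknot $\gamma$ by crossing changes'' does not, as written, supply such a mechanism.

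The paper's fix is specific and is the content you are missing. Tube $\widetilde{D_2}$ with $k$ parallel copies of the linking torus $T$ at the sole first-stage kink (oriented so $T\cdot\widetilde{D_2}=-1$); each copy shifts the homological framing by $-2$ and raises the genus by one, yielding a correctly framed genus-$k$ surface $\Sigma$ bounded by $\alpha$ in the complement of $D_1$. Then surger $\Sigma$ along $k$ disjoint curves (the images of the $(1,-1)$ curves on the tori), each isotopic away from $D_1$ to a fixed curve $\eta$ that bounds an immersed disk there; surgery removes the genus and creates second-stage kinks (whose signs are immaterial for $\mathfrak{C}^+_{2,0}$) without disturbing the framing. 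If what you actually had in mind was to introduce local kinks in $\widetilde{D_2}$ to shift its canonical framing by $\pm 2$ apiece, that is a legitimate alternative, but it needs to be stated that way and the framing shift justified --- ``unknotting $\gamma$'' is not the right description, since $\gamma$ is already unknotted.
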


Notice that by Corollary 1, knots in $\mathfrak{C}^\pm_{2,\,0}$ must have zero Arf invariant. As a result, for odd $n$, $T^\pm_n$ cannot be contained in $\mathfrak{C}^\pm_{2,\,0}$, since $\text{Arf}\,(T^\pm_n)\equiv n\mod 2$.

\begin{proof}[Proof of Proposition \ref{height2ex}]Let $K$ denote $T^\pm_{2k}$ for some $k\in\mathbb{Z}$. The knot $K$ bounds an obvious kinky disk $D_1$ in $B^4$ with a single positive (resp. negative) kink, corresponding to changing one of the two crossings at the clasp. The standard curve, which would need to bound a second stage kinky disk, is an unknot which can be seen as the `core' curve of $K$, shown in Figure \ref{height2exfig1}. Call this curve $\alpha$. As depicted in the figure, $\alpha$ is `mostly' contained in a single slice of $B^4$ (with respect to the radial function). Let this radius be denoted $t_0$. $D_1$ is contained in the region of $B^4$ with radii $\geq t_0$, and as a result, we see that $\alpha$ bounds an \textit{embedded} disk $\widetilde{D_2}$ away from $D_1$, on the side of $B^4$ with radius $< t_0$. If $\widetilde{D_2}$ had the correct framing, it would imply that each $T^\pm_{2k}$ is slice; since this is false, $\widetilde{D_2}$ must have the wrong framing. Note that since $\alpha$ is primarily in the copy of $S^3$ corresponding to radius $t_0$, the push off along the canonical framing is the parallel with the standard zero (Seifert) framing. On the other hand we see that the push off corresponding to $\widetilde{D_2}$ has linking number $2k$ with $\alpha$, i.e.\ the framing is off by $2k$. Our goal for the rest of this proof is to correct the framing of $\widetilde{D_2}$ and obtain an acceptable second-stage kinky disk, $D_2$. Recall that our framing convention is homological, that is, we are seeking a $D_2$ such that for a push off $D_2'$ along the canonical framing, the signed count of intersections between $D_2$ and $D_2'$ should be zero. 

\begin{figure}[t!]
  \begin{center}
  \includegraphics[width=4.5in]{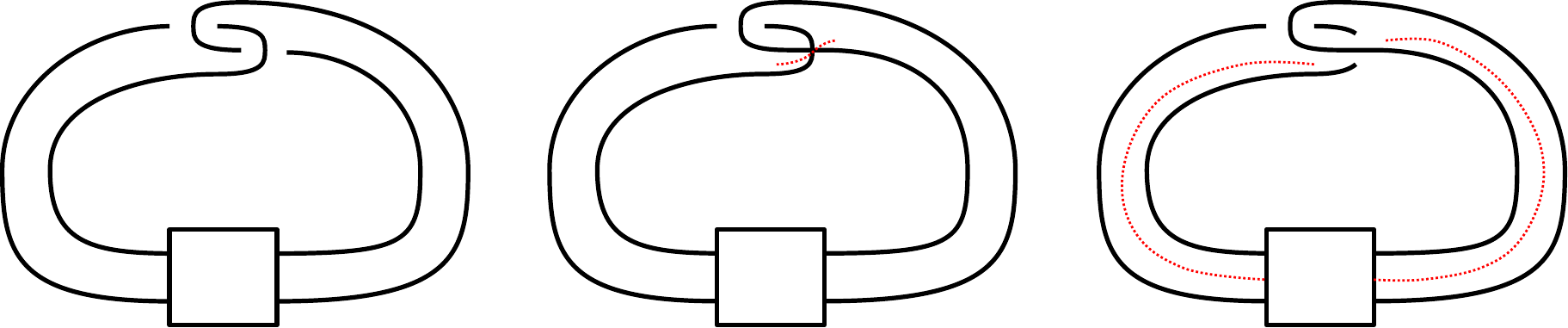}
  \put(-3.91,0.13){$n$}
  \put(-2.34,0.13){$n$}
  \put(-0.77,0.13){$n$}
  \put(-4.2,-0.25){$t>t_0+\epsilon$}
  \put(-2.63,-0.25){$t=t_0+\epsilon$}
  \put(-0.89,-0.25){$t=t_0$}
  \caption{Homotopy showing the base-level kinky disk bounded by $T_n^\pm$. $\alpha$, the standard curve to which the second-level kinky disk should be attached, is shown dotted.}\label{height2exfig1}
  \end{center}
\end{figure}

Around the (single) kink in $D_1$, we have a linking torus $T$, which intersects $\widetilde{D_2}$ transversely once. For a precise description of the linking torus at the transverse point of intersection of two planes, see \cite[p. 12]{FreedQ90}. All we will need here is that the meridian and longitude of the linking torus are respectively meridians of the intersecting planes. Therefore, in our case, they are both meridians of $D_1$. 

Assume $T$ is oriented such that $T\cdot\,\widetilde{D}_2 =-1$. Take $k$ parallel (non-intersecting) copies of $T$. We can smooth the intersection between each copy of $T$ and $\widetilde{D_2}$ to obtain a connected surface bounded by $\alpha$. The embedded surface $\Sigma$ thus obtained is homologically $\widetilde{D_2} + kT$. The smoothing process is described in \cite[p. 38]{GomStip99} and can be performed without introducing any self-intersections of $\Sigma$. 

\begin{figure}[t!]
  \begin{center}
  \includegraphics[width=1.8in]{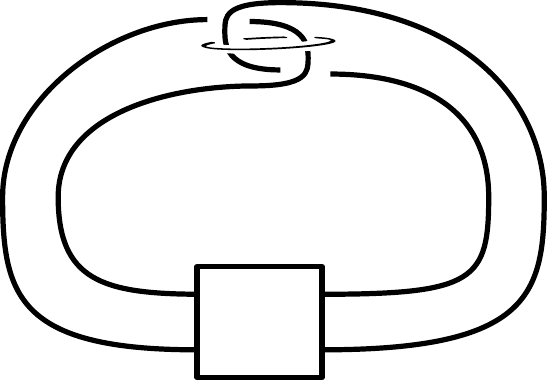}
  \put(-1,0.15){$n$}
  \put(-0.65,1.1){$\eta$}  
  \caption{The curve $\eta$}\label{height2exfig2}
  \end{center}
\end{figure}

The framing of $\Sigma$ (i.e.\ the homological self-intersection number) changes by $2\widetilde{D_2}\cdot\, kT=-2k$. We now have a correctly framed surface of genus $k$ bounded by $\alpha$. We will now use surgery to obtain a kinky disk bounded by $\alpha$. 

\begin{figure}[b!]
  \begin{center}
  \includegraphics[width=4in]{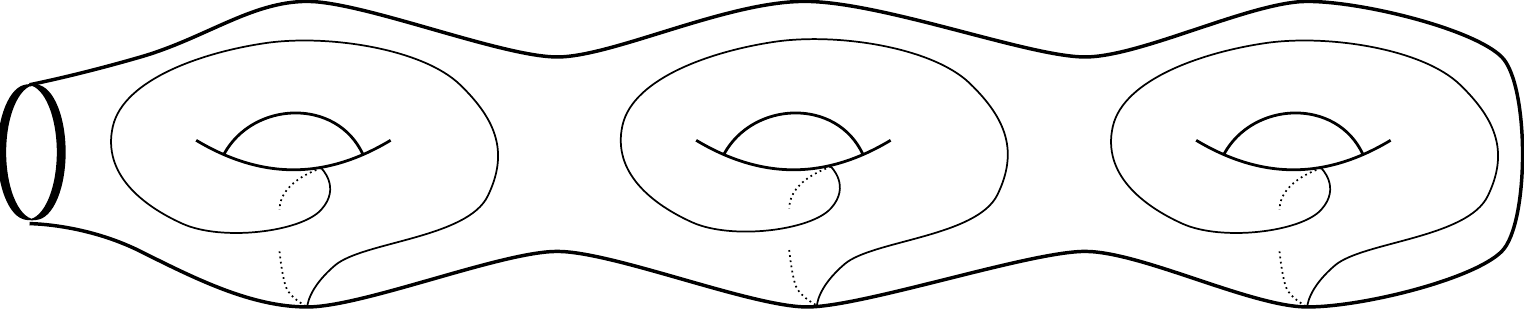}
  \caption{Proof of Proposition \ref{height2ex}.}\label{height2exfig3}
  \end{center}
\end{figure}

Assume $k=1$ for the moment. Then $\Sigma$ is a genus one surface. Consider the $(1,-1)$ curve on $\Sigma$. The meridian and longitude of $\Sigma$ are the same as the meridian and longitude of $T$, and therefore the $(1,-1)$ curve on $\Sigma$ is isotopic to the curve $\eta$ shown in Figure \ref{height2exfig2}, in the exterior of $D_1$. For larger values of $k$ we can find a set of curves, shown (abstractly) in Figure \ref{height2exfig3}, which are each isotopic to $\eta\subseteq S^3$ away from $D_1$. These curves are the images of the $(1,-1)$ curves on $T$ in $\Sigma$---this is easily seen from the construction of $\Sigma$. Surgering along these curves away from $D_1$ would give us a (correctly framed) second stage kinky handle and complete the proof. The resulting disk will have the correct framing since surgery does not change framing (this is because we are using a homological framing and surgery does not change the homology class).

\begin{figure}[t!]
  \begin{center}
  \includegraphics[width=2.5in]{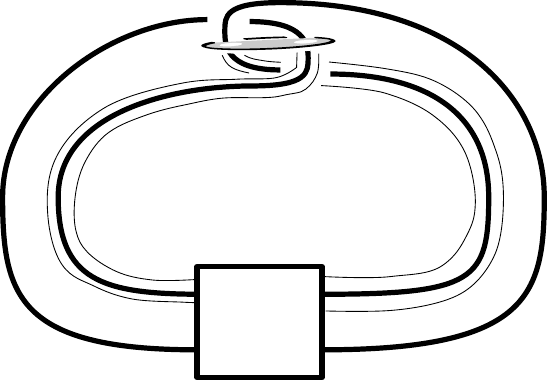}
  \put(-1.35,0.25){$n$}
  \put(-0.92,1.55){$\eta$}  
  \caption{The curve $\eta$ bounds a surface.}\label{height2exfig4}
  \end{center}
\end{figure}

The curve $\eta$ bounds a genus one surface away from $D_1$ as shown in Figure \ref{height2exfig4}. The longitude of this surface is isotopic (away from $D_1$) to the standard curve of $D_1$, namely $\alpha$. We know that $\alpha$ bounds a disk, $\widetilde{D_2}$, away from $D_1$. Surgering using parallel copies of $\widetilde{D_2}$, we see that $\eta$ bounds an immersed disk $\delta$ away from $D_1$. Note that $\delta$ will necessarily intersect $\widetilde{D_2}$ (and therefore $\Sigma$).

We can use $\delta$ to surger $\Sigma$ when $k=1$. For larger values of $k$, we will need multiple parallel copies of $\delta$, which will necessarily intersect one another. However, as long as there are no intersections with $D_1$, we still create a Casson tower of height two as desired\footnote{The author is grateful to Robert Gompf for suggesting a key step in the proof for Proposition \ref{height2ex}.}. \end{proof}

Recall that $\Wh_n(K)$ denotes the $n$--twisted Whitehead double of the knot $K$, where the superscript indicates the type of clasp. By a very similar argument as above, we can show the following.

\begin{proposition}\label{twistedwhitehead}For even $n$ and any knot $K$, $\Wh[+]_n(K)\in\mathfrak{C}_{2,\,0}^+$ and $\Wh[-]_n(K)\in\mathfrak{C}_{2,\,0}^-$.\end{proposition}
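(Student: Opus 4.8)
The plan is to reproduce the proof of Proposition~\ref{height2ex} almost word for word, letting the companion knot $K$ play the role filled there by the unknot (recall that $T^\pm_n$ is exactly the $n$--twisted Whitehead double of the unknot). The one substantive change is that the second-stage surfaces we build will in general be immersed rather than embedded, which is harmless: membership in $\mathfrak{C}^\pm_{2,\,0}$ imposes no condition at all on the kinks of the second-stage kinky disk, and with $n=0$ no derived-series condition either.

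I would first observe that, just as for a twist knot, changing one of the two crossings at the clasp of $\Wh[+]_n(K)$ (resp.\ $\Wh[-]_n(K)$) undoes the clasp and produces the unknot, so tracing that homotopy exhibits the knot as the boundary of a kinky disk $D_1\subset B^4$ with a single positive (resp.\ negative) kink. The unique standard curve $\alpha$ for $D_1$ is the ``core'' curve of the pattern, which is now isotopic in $S^3$ to a parallel copy of $K$ rather than to an unknot; as in Figure~\ref{height2exfig1} it may be arranged to lie ``mostly'' in one level sphere $S^3\times\{t_0\}$, with $D_1$ in the region $\{t\ge t_0\}$ and with canonical (homological) framing equal to the $0$--framing of this copy of $K$. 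Since $\mathfrak{C}_1=\mathcal{C}$, the knot $K$, hence $\alpha$, bounds an immersed disk in $B^4$; taking such a disk to behave as in Proposition~\ref{height2ex} in a collar of $\alpha$ and otherwise to lie in $\{t<t_0\}$ gives an immersed disk $\widetilde{D_2}$ bounded by $\alpha$, disjoint from $D_1$, whose homological self-intersection number differs from the canonical framing by an even integer (equal to $n$ plus twice the signed count of double points of $\widetilde{D_2}$).

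From here I would quote the remainder of the proof of Proposition~\ref{height2ex} verbatim: take the requisite number of parallel copies of the linking torus $T$ of the kink of $D_1$, suitably oriented, and smooth the intersections with $\widetilde{D_2}$ (introducing no new double points) to obtain a surface $\Sigma$ bounded by $\alpha$, disjoint from $D_1$, whose homological framing is now the canonical one; then surger $\Sigma$ to a kinky disk $D_2$ along the $(1,-1)$ curves on the copies of $T$, which away from $D_1$ are each isotopic to the curve $\eta$ of Figure~\ref{height2exfig2}, using parallel copies of the immersed disk $\delta$ bounded by $\eta$ away from $D_1$ (obtained, as in Figure~\ref{height2exfig4}, by surgering the genus-one surface along copies of $\widetilde{D_2}$). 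Surgery preserves the homological framing, so $D_2$ has the canonical framing, is disjoint from $D_1$, and attaches to the standard curve of $\nu(D_1)$ to form a Casson tower of height two whose base-level kink is positive (resp.\ negative); hence $\Wh[+]_n(K)\in\mathfrak{C}^+_{2,\,0}$ (resp.\ $\Wh[-]_n(K)\in\mathfrak{C}^-_{2,\,0}$).

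The only points not already settled in Proposition~\ref{height2ex} are (i) confirming that the framing defect of $\widetilde{D_2}$ is even and that $K$ enters only through the isotopy type of $\alpha$, not through the local framing data at the kink; and (ii) checking that replacing ``embedded'' by ``immersed'' --- for $\widetilde{D_2}$, for $\delta$, and for the intermediate surfaces --- obstructs none of the steps (the torus smoothing, the identification of the surgery curves with $\eta$, the final surgery), which it does not, since none of those steps used embeddedness. I expect (i) to be the sole place that genuinely requires inspecting the twisted Whitehead pattern rather than citing Proposition~\ref{height2ex}; everything else is bookkeeping.
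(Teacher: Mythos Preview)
Your proposal is correct and follows the paper's approach essentially verbatim: reduce to Proposition~\ref{height2ex}, with $\alpha$ now of knot type $K$ so that $\widetilde{D_2}$ is merely immersed rather than embedded, and observe that all the extra intersections thereby created lie in the second-stage kinky disk and are therefore harmless. One small point: the paper takes $\widetilde{D_2}$ to be a \emph{correctly framed} kinky disk for $K$ (any kinky disk for $K$ in $B^4$ has canonical framing equal to the Seifert framing), so the framing defect is exactly $n=2k$ rather than your ``$n$ plus twice the signed double-point count''---but since either quantity is even, the correction via linking tori proceeds identically.
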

\begin{proof}The argument in this case differs from the proof of the previous proposition only in a few details. As before, $\Wh_{2k}(K)$ bounds a first stage kinky disk $D_1$ with a single positive (resp. negative) kink. The standard curve $\alpha$ is no longer an unknot as in the previous case, but has the knot type of $K$. However, any knot $K$ bounds a (correctly framed) kinky disk in the 4--ball, since it can be unknotted by changing crossings (see Example \ref{allc1}). However, the $n$ twists in the Whitehead doubling operator used imply that a regular neighborhood of the na\"{i}ve choice of second stage disk, $\widetilde{D_2}$, is twisted $2k$ times. Fortunately, as before, we can tube with the linking torus at the (single) kink in the first stage disk, and surger repeatedly using copies of $\widetilde{D_2}$ to obtain a Casson tower of height two. The proof is identical to the proof of Proposition \ref{height2ex} apart from the fact that $\widetilde{D_2}$ is no longer embedded. Several new intersections are created as before, but they are all in the second stage kinky disk. \end{proof}

\begin{corollary}$\mathfrak{C}_{2,\,0}^+\neq \mathfrak{C}_{2,\,1}^+$, $\mathfrak{C}_{2,\,0}^-\neq \mathfrak{C}_{2,\,1}^-$, and $\mathfrak{C}_{2,\,0}\neq \mathfrak{C}_{2,\,1}$. \end{corollary}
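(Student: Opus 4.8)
The plan is to exhibit, for each of the three pairs, a knot that lies in the level-$0$ filtration but not the level-$1$ filtration, thereby separating them. The natural candidates are furnished by the preceding two propositions: Proposition \ref{height2ex} shows that for even $n$ the twist knots $T^+_n$ lie in $\mathfrak{C}^+_{2,\,0}$ (and $T^-_n$ in $\mathfrak{C}^-_{2,\,0}$), and Proposition \ref{twistedwhitehead} gives the analogous statement for twisted Whitehead doubles $\Wh[+]_n(K), \Wh[-]_n(K)$ with $n$ even. So the whole task reduces to finding, among these families, members which fail to lie in $\mathfrak{C}^\pm_{2,\,1}$ (respectively $\mathfrak{C}_{2,\,1}$).

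First I would invoke the containments established earlier: by Theorem A (part (iii)--(iv), i.e.\ Proposition \ref{weirdtower}) we have $\mathfrak{C}^+_{2,\,1}\subseteq\mathcal{P}_1$ and $\mathfrak{C}^-_{2,\,1}\subseteq\mathcal{N}_1$, and by Corollary 2 (which says $\mathfrak{C}_{2,\,1}$ knots are algebraically slice) together with the fact that $\mathfrak{C}^\pm_{2,\,1}\subseteq\mathfrak{C}_{2,\,1}$, any knot in $\mathfrak{C}^\pm_{2,\,1}$ or $\mathfrak{C}_{2,\,1}$ must be algebraically slice. Hence it suffices to locate a member of each even-$n$ family above which is \emph{not} algebraically slice. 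This is the crux of the argument. For the twist knots: $T^+_n$ for even $n\neq 0$ is not algebraically slice — its Alexander polynomial is $\Delta(t) = nt - (2n+1) + nt^{-1}$ (up to normalization), which is not of the form $f(t)f(t^{-1})$ unless $n(n+1)$ is a perfect square, and one can pick, say, $n=2$ (the knot $5_2$ type, Alexander polynomial $2t-5+2t^{-1}$), which is manifestly not algebraically slice since its signature is nonzero. Thus $T^+_2\in\mathfrak{C}^+_{2,\,0}\setminus\mathfrak{C}^+_{2,\,1}$, $T^-_2\in\mathfrak{C}^-_{2,\,0}\setminus\mathfrak{C}^-_{2,\,1}$, and since $\mathfrak{C}^\pm_{2,\,0}\subseteq\mathfrak{C}_{2,\,0}$ and $\mathfrak{C}^\pm_{2,\,1}\subseteq\mathfrak{C}_{2,\,1}$ we also separate $\mathfrak{C}_{2,\,0}$ from $\mathfrak{C}_{2,\,1}$. (Alternatively one could run the Whitehead-double argument: $\Wh[+]_2(K)$ has nontrivial Alexander polynomial of the twist-knot type and is not algebraically slice, giving the same conclusion via Proposition \ref{twistedwhitehead}.)

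The step I expect to require the most care is verifying that the chosen explicit knot genuinely fails to be algebraically slice — i.e.\ computing its Alexander polynomial (or Seifert form) and checking the Fox--Milnor / signature obstruction. This is a routine but necessary computation; once it is in hand, the three inequalities follow formally. I would present the argument in the write-up by: (1) recalling $\mathfrak{C}^\pm_{2,\,1}\subseteq\mathfrak{C}_{2,\,1}$ and Corollary 2 to reduce to algebraic sliceness; (2) exhibiting the explicit even-$n$ twist knot (e.g.\ $T^+_2$) and its Alexander polynomial; (3) noting it lies in $\mathfrak{C}^+_{2,\,0}$ by Proposition \ref{height2ex} but not in $\mathfrak{C}^+_{2,\,1}$; (4) repeating verbatim with opposite signs for the negative case; and (5) observing the $\mathfrak{C}_{2,\,0}\neq\mathfrak{C}_{2,\,1}$ statement is then immediate from the inclusions among these monoids and groups.
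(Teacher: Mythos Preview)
Your overall strategy is exactly the paper's: use Proposition~\ref{height2ex} to place even-twist knots into $\mathfrak{C}_{2,0}^\pm$, invoke Corollary~2 to reduce non-membership in $\mathfrak{C}_{2,1}^\pm$ (and $\mathfrak{C}_{2,1}$) to failure of algebraic sliceness, and then exhibit an even-twist knot that is not algebraically slice. The paper does precisely this, citing the Casson--Gordon classification that $T^+_n$ is algebraically slice exactly when $n=l(l+1)$ for some $l\geq 0$.

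However, your specific choice $n=2$ is a genuine error. Since $2=1\cdot 2=l(l+1)$ with $l=1$, the knot $T^+_2$ \emph{is} algebraically slice; in fact with your own Alexander polynomial $2t-5+2t^{-1}$ one has
\[
2t-5+2t^{-1}=-(2t-1)(2t^{-1}-1),
\]
which is of Fox--Milnor type, and its signature is zero. (This is the Stevedore knot $6_1$, which is smoothly slice, not $5_2$ as you wrote.) So your candidate lies in $\mathfrak{C}_{2,1}^+$ trivially and separates nothing. The fix is immediate: take $n=4$ (or any even $n\notin\{0,2,6,12,\dots\}$). Then $T^+_4$ is not algebraically slice, and the rest of your argument goes through verbatim. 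The paper avoids this pitfall by quoting the full $n=l(l+1)$ criterion rather than checking a single case by hand.
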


\begin{proof}The knots $T^+_n$ and $\Wh[+]_n(K)$ are algebraically slice exactly when $n=l(l+1)$ with $l\geq 0$ \cite{CasG86}. (Similarly, knots $T^-_n$ and $\Wh[-]_n(K)$ are algebraically slice exactly when $n=-l(l+1)$ with $l\geq 0$.) This fact, together with Proposition \ref{height2ex}, yields infinitely many knots in $\mathfrak{C}_{2,\,0}^+  -  \mathfrak{C}_{2,\,1}^+$, $\mathfrak{C}_{2,\,0}^-  -  \mathfrak{C}_{2,\,1}^-$, and $\mathfrak{C}_{2,\,0}  -  \mathfrak{C}_{2,\,1}$. This is because, by Corollary 2, knots in $\mathfrak{C}_{2,\,1}^\pm$ or $\mathfrak{C}_{2,\,1}$  must be algebraically slice. \end{proof}

\begin{corollary}$\mathfrak{C}_3^+\neq\mathfrak{C}_2^+$, $\mathfrak{C}_3^-\neq\mathfrak{C}_2^-$, and $\mathfrak{C}_3\neq\mathfrak{C}_2$. \end{corollary}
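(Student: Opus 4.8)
The plan is to deduce all three inequalities from results already established, with no new geometric construction required. First I would assemble the relevant inclusions: by Corollary~\ref{height3again} we have $\mathfrak{C}_3^+\subseteq\mathfrak{C}_{2,\,1}^+$, $\mathfrak{C}_3^-\subseteq\mathfrak{C}_{2,\,1}^-$, and $\mathfrak{C}_3\subseteq\mathfrak{C}_{2,\,1}$; together with $\mathfrak{C}_{2,\,1}^\pm\subseteq\mathfrak{C}_{2,\,1}$ and Corollary~2 (every knot in $\mathfrak{C}_{2,\,1}$ is algebraically slice), this shows that every knot in $\mathfrak{C}_3$, $\mathfrak{C}_3^+$, or $\mathfrak{C}_3^-$ is algebraically slice. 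So it suffices to produce knots in $\mathfrak{C}_2^+$, $\mathfrak{C}_2^-$, and $\mathfrak{C}_2$ that are \emph{not} algebraically slice.

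Such knots are furnished by Proposition~\ref{height2ex}: for every even $n$, $T^+_n\in\mathfrak{C}^+_{2,\,0}\equiv\mathfrak{C}_2^+$ and $T^-_n\in\mathfrak{C}^-_{2,\,0}\equiv\mathfrak{C}_2^-$. By \cite{CasG86} the knot $T^+_n$ is algebraically slice precisely when $n=l(l+1)$ for some $l\ge 0$, and $T^-_n$ precisely when $n=-l(l+1)$. Taking $n$ to be an even integer not of the form $\pm l(l+1)$ --- for instance $n=4$ --- gives $T^+_4\in\mathfrak{C}_2^+$ and $T^-_{-4}\in\mathfrak{C}_2^-$, neither of which is algebraically slice, and since $\mathfrak{C}_2^+\subseteq\mathfrak{C}_2$ also $T^+_4\in\mathfrak{C}_2$. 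Hence $T^+_4\in\mathfrak{C}_2^+- \mathfrak{C}_3^+$, $T^-_{-4}\in\mathfrak{C}_2^- - \mathfrak{C}_3^-$, and $T^+_4\in\mathfrak{C}_2 - \mathfrak{C}_3$, so each of the three filtrations is proper at this stage. Letting $n$ range over all even integers outside $\{\pm l(l+1): l\ge 0\}$ in fact produces infinitely many such knots in every case, exactly as in the preceding corollary.

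I do not expect any genuine obstacle: the geometric content lies entirely in Proposition~\ref{height2ex} (and Proposition~\ref{twistedwhitehead}) and in the inclusion $\mathfrak{C}_3\subseteq\mathfrak{C}_{2,\,1}$ of Corollary~\ref{height3again}. The only point needing a little care is to chain the inclusions in the correct order, and in particular to note that $\mathfrak{C}_3^\pm\subseteq\mathfrak{C}_3\subseteq\mathfrak{C}_{2,\,1}$ so that the algebraic-sliceness obstruction applies equally to the signed Casson-tower filtrations.
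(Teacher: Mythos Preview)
Your proposal is correct and follows essentially the same route as the paper: the paper's proof simply notes that $\mathfrak{C}_3^\pm\subseteq\mathfrak{C}_{2,\,1}^\pm$ and $\mathfrak{C}_3\subseteq\mathfrak{C}_{2,\,1}$ and then appeals to the previous corollary (which uses Proposition~\ref{height2ex} and the algebraic-sliceness obstruction from Corollary~2), and you have unpacked exactly this chain with the explicit witness $n=4$.
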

\begin{proof}Since $\mathfrak{C}_3^\pm\subseteq\mathfrak{C}_{2,\,1}^\pm$ and $\mathfrak{C}_3\subseteq\mathfrak{C}_{2,\,1}$, this follows immediately from the previous corollary.\end{proof}

From the proof of Proposition \ref{twistedwhitehead}, it is tempting to speculate that iterated twisted Whitehead doubles bound arbitrarily high Casson towers, i.e.\ if a knot $K$ bounds a Casson tower of height $p$, $\Wh_n(K)$ bounds a Casson tower of height $p+1$ for any $n$. Unfortunately, this does not follow when $n\neq 0$. In particular, if our wishful thinking were correct, twist knots would bound arbitrarily high Casson towers (since they are twisted doubles of the unknot, which bounds arbitrarily high Casson towers). However, we know this is not true since some twist knots are not algebraically slice and therefore, do not bound Casson towers of height three. 

In the $n=0$ case, we get the following result. 

\begin{proposition}For any knot $K\in\mathfrak{C}_k$, $$\Wh[+]_0(K)\in\mathfrak{C}_{k+1}^+$$ and $$\Wh[-]_0(K)\in\mathfrak{C}_{k+1}^-.$$\end{proposition}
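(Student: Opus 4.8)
The plan is to realize $\Wh[+]_0(K)$ as bounding a single kinky handle with one positive kink whose standard curve is a $0$-framed copy of $K$, and then graft onto that curve the height $k$ Casson tower supplied by the hypothesis $K\in\mathfrak{C}_k$, thereby producing a Casson tower of height $k+1$ bounded by $\Wh[+]_0(K)$ with a positive base-level kink.

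Concretely, the first step is to produce the base stage. This is the $n=0$ specialization of the construction in the proof of Proposition~\ref{twistedwhitehead}: changing the single clasp crossing of $\Wh[+]_0(K)$ yields the unknot, and tracing the corresponding homotopy radially into $B^4$ exhibits $\Wh[+]_0(K)$ as the boundary of a kinky disk $D_1$ with exactly one kink, which is positive because the clasp is positive. As in the proof of Proposition~\ref{height2ex}, this homotopy can be arranged so that $D_1$ sits in a collar $\{t\ge t_0\}\subseteq B^4$ while the unique member $\alpha$ of a standard set of curves for the kinky handle neighborhood $N(D_1)$ lies in the slice $S^3\times\{t_0\}$; there $\alpha$ is isotopic to $K$, and --- because the double is $0$-twisted --- the framing that $\alpha$ carries as a standard curve of $N(D_1)$ is the $0$-framing of $K$. (This is exactly Proposition~\ref{twistedwhitehead} with $n=2k$ in the case $k=0$: the framing discrepancy ``$2k$'' that there forced a tubing-and-surgery correction now vanishes.)

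The second step is routine. Since $K\in\mathfrak{C}_k$, the knot $K$ --- equivalently the $0$-framed curve $\alpha$ --- bounds a Casson tower $T_K$ of height $k$ in $B^4$. Embed a copy of $T_K$ in the complementary region $\{t\le t_0\}\cong B^4$, attached along $\alpha\subseteq S^3\times\{t_0\}=\partial\{t\le t_0\}$ with the canonical framing of the attaching curve of $T_K$ matched to the $0$-framing of $\alpha$. This copy of $T_K$ is automatically disjoint from $D_1$, which lies in $\{t\ge t_0\}$, and from $\Wh[+]_0(K)\subseteq S^3=\partial B^4$. By the inductive definition of a Casson tower (a height $k+1$ tower being a height $1$ tower with a height $k$ tower attached along each of its standard curves), the union $N(D_1)\cup T_K$, with $T_K$ glued to the framed curve $\alpha$, is a Casson tower of height $k+1$: $N(D_1)$ is its first stage and $T_K$ supplies stages $2$ through $k+1$. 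Its only base-level kink is the positive kink of $D_1$, so $\Wh[+]_0(K)\in\mathfrak{C}_{k+1}^+$. The statement for $\Wh[-]_0(K)$ is proved by the identical argument, the negative clasp giving a negative base-level kink.

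The only point requiring genuine care --- and where I expect the main obstacle to lie --- is the assertion in the first step that the standard curve $\alpha$ of the base-level kinky handle is a $0$-framed copy of $K$: one must check both that $\alpha$ has the knot type of $K$ (the same fact already used in Proposition~\ref{twistedwhitehead}) and, crucially, that its framing as a standard curve is exactly the $0$-framing, with no correction needed. It is precisely the untwistedness of $\Wh[\pm]_0$ that makes this framing work out on the nose and hence allows the height $k$ companion tower to be attached directly, without the extra surgery used to handle $\Wh[\pm]_n$ for $n\ne 0$.
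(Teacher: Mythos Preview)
Your proof is correct and follows exactly the same approach as the paper: build the base-level kinky handle from the clasp crossing, observe its standard curve is a $0$-framed copy of $K$, and attach the height $k$ tower coming from $K\in\mathfrak{C}_k$. The paper's proof is a single sentence stating this without elaboration; your version spells out the same argument with considerably more care, particularly the framing check that makes the untwisted case work without the correction of Proposition~\ref{twistedwhitehead}.
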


\begin{proof} If a knot $J$ bounds a Casson tower of height $k$, $\Wh_0(J)$ bounds a Casson tower of height $k+1$, with a single kink in the lowest level with sign corresponding to the sign of the clasp of the pattern used. The result follows. \end{proof}

\begin{remark}Note that the above proposition implies that for any knot $K\in\mathfrak{C}_k$, $$\Wh[+]_0( \underbrace{\Wh_0 \cdots \Wh_0}_{n-1 \text{ times}}(K))\in\mathfrak{C}_{n+k}^+$$ and $$\Wh[-]_0( \underbrace{\Wh_0 \cdots \Wh_0}_{n-1 \text{ times}}(K))\in\mathfrak{C}_{n+k}^-.$$\end{remark}

The following is an immediate corollary of the above proposition, Theorem A, Proposition \ref{twistedwhitehead} and Corollary \ref{height3again}.

\begin{corollary}For any even $k$ and any knot $K$, $$\Wh[+]_0(\Wh_k(K))\in\mathfrak{C}_3^+\subseteq\bigcap_{n}\mathfrak{C}_{2,\,n}^+\subseteq\bigcap_{n} \mathcal{P}_n$$  and $$\Wh[-]_0(\Wh_k(K))\in\mathfrak{C}_3^-\subseteq\bigcap_{n}\mathfrak{C}_{2,\,n}^-\subseteq\bigcap_{n} \mathcal{N}_n.$$ \end{corollary}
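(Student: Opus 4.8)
The plan is to chain together results already established; once the pieces are lined up the corollary is purely formal. The first step is to place the inner double $\Wh_k(K)$ in $\mathfrak{C}_2$. Since $k$ is even, Proposition~\ref{twistedwhitehead} applies and gives $\Wh[+]_k(K)\in\mathfrak{C}_{2,\,0}^+$ and $\Wh[-]_k(K)\in\mathfrak{C}_{2,\,0}^-$; in either case $\Wh_k(K)\in\mathfrak{C}_{2,\,0}\equiv\mathfrak{C}_2$, i.e.\ $\Wh_k(K)$ bounds a Casson tower of height two. The second step is to apply the proposition immediately preceding this corollary to the knot $\Wh_k(K)\in\mathfrak{C}_2$ (so that the height parameter there is $2$): this yields $\Wh[+]_0(\Wh_k(K))\in\mathfrak{C}_3^+$ and $\Wh[-]_0(\Wh_k(K))\in\mathfrak{C}_3^-$. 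Here it matters that $0$--twisted Whitehead doubling adds exactly one new base-level kink, whose sign is the sign of the clasp of the pattern, so the resulting height-three tower has all positive (resp. negative) base-level kinks.

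It then remains to propagate these memberships along the asserted chains of inclusions. By Corollary~\ref{height3again}, $\mathfrak{C}_3^+\subseteq\mathfrak{C}_{2,\,n}^+$ and $\mathfrak{C}_3^-\subseteq\mathfrak{C}_{2,\,n}^-$ for every $n\geq 0$, hence $\mathfrak{C}_3^{\pm}\subseteq\bigcap_n\mathfrak{C}_{2,\,n}^{\pm}$; and by parts (iii) and (iv) of Theorem~A, $\mathfrak{C}_{2,\,n}^+\subseteq\mathcal{P}_n$ and $\mathfrak{C}_{2,\,n}^-\subseteq\mathcal{N}_n$ for every $n$, so intersecting over $n$ gives $\bigcap_n\mathfrak{C}_{2,\,n}^+\subseteq\bigcap_n\mathcal{P}_n$ and $\bigcap_n\mathfrak{C}_{2,\,n}^-\subseteq\bigcap_n\mathcal{N}_n$. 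Concatenating the two steps above with these inclusions produces exactly the two displayed chains.

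I do not expect a genuine obstacle here; the only points needing care are that the parity hypothesis on $k$ is precisely what licenses Proposition~\ref{twistedwhitehead}, and that $\mathfrak{C}_{2,\,0}\equiv\mathfrak{C}_2$, so ``bounds a Casson tower of height two'' is literally membership in $\mathfrak{C}_2$ and hence usable as the hypothesis of the preceding proposition. I would also point out, to forestall the natural but false generalization, that no analogue should hold with a nonzero outer twist: iterated twisted Whitehead doubles of the unknot include twist knots that are not algebraically slice, and such knots cannot lie in $\mathfrak{C}_3^{\pm}$ by Corollary~2.
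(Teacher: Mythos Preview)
Your proof is correct and follows exactly the route the paper takes: the paper states this as ``an immediate corollary of the above proposition, Theorem~A, Proposition~\ref{twistedwhitehead} and Corollary~\ref{height3again},'' and you have simply spelled out that chain in the right order. Your closing caveat about nonzero outer twist also echoes the paper's own remark preceding the proposition.
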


The above is related to Corollary 3.7 in \cite{CHHo13}, which shows that if $J\in\mathcal{P}_0$ then $\Wh_0(J)$ is in $\bigcap_{n}\mathcal{P}_n$. For any $K$ and $n$, we know that $\Wh[+]_n(K)\in\mathcal{P}_0$, and therefore it was already known that $\Wh[+]_0(\Wh[+]_n(K))\in\bigcap_n\mathcal{P}_n$. However, it is \textbf{not} generally true that $\Wh[-]_n(K)\in\mathcal{P}_0$ for any $K$. For example, $\Wh[-]_0(LHT)\notin\mathcal{P}_0$. 

\section{Generalization to links}\label{links}

The definitions of $\mathfrak{C}_n$, $\mathfrak{C}_n^\pm$, $\mathfrak{C}_{2,\,n}$ and $\mathfrak{C}_{2,\,n}^\pm$ can be naturally generalized to the context of links. Since the connected sum  operation is not well-defined on links, we have to consider the \textit{string link concordance group} of $m$--component string links, denoted $\mathcal{C}(m)$, under the concatenation operation. For $L\in\mathcal{C}(m)$, let $\widehat{L}$ denote the $m$--component link obtained by taking the closure of $L$. 

\begin{defn_1'}An $m$--component string link link $L$ is said to be in $\mathfrak{C}_n(m)$ if $\widehat{L}_i$, the components of $\widehat{L}$, bound disjoint Casson towers of height $n$. \end{defn_1'}

\begin{defn_2'}An $m$--component string link $L$ is said to be in $\mathfrak{C}_{2,\,n}(m)$ if $\widehat{L}_i$, the components of $\widehat{L}$, bound disjoint Casson towers $T_i$ of height two such that each member of a standard set of curves for each $T_i$ is in $\pi_1(B^4  -  \sqcup_i\, T_i)^{(n)}$. \end{defn_2'}

\begin{defn_3'}An $m$--component string link link $L$ is said to be in $\mathfrak{C}_n^+(m)$ (resp. $\mathfrak{C}_n^-(m)$) if $\widehat{L}_i$, the components of $\widehat{L}$, bound disjoint Casson towers of height $n$ such that all the kinks in the first stage kinky disks are positive (resp. negative). \end{defn_3'}

\begin{defn_4'}An $m$--component string link $L$ is said to be in $\mathfrak{C}_{2,\,n}^+(m)$ (resp. $\mathfrak{C}_{2,\,n}^-(m)$) if $\widehat{L}_i$, the components of $\widehat{L}$, bound disjoint Casson towers $T_i$ of height two such that all the kinks in the first stage kinky disks are positive (resp. negative) and each member of a standard set of curves for each $T_i$ is in $\pi_1(B^4  -  \sqcup_i\, T_i)^{(n)}$. \end{defn_4'}

There are similar definitions for the grope filtrations $\mathcal{G}_{n}(m)$ and $\mathcal{G}_{2,\,n}(m)$, and the $n$--solvable filtration $\mathcal{F}_n(m)$ for $\mathcal{C}(m)$ which we omit for the sake of brevity---they are identical to the definitions in the case of knots, except that the components of the link are required to bound \textit{disjoint} disks in 4--manifolds of the relevant flavor. Positive links, i.e.\ links in $\mathcal{P}_0(m)$, have been studied by Cochran--Tweedy in \cite{CTwee13}.

Since all of our arguments in Chapter \ref{tower_results} take place within Casson towers, the results generalize easily to links. Therefore, we obtain the following theorem. 

\begin{thm_A'}For any $n\geq 0$, and $m\geq 1$, 
\begin{itemize}
\item[(i)]$\mathfrak{C}_{n+2}(m)\subseteq\mathcal{G}_{n+2}(m)\subseteq\mathcal{F}_n(m)$
\item[(ii)]$\mathfrak{C}_{2,\,n}(m)\subseteq\mathcal{G}_{2,\,n}(m)\subseteq\mathcal{F}_n(m)$.
\item[(iii)]$\mathfrak{C}_{n+2}^+(m) \subseteq \mathfrak{C}_{2,\,n}^+(m) \subseteq \mathcal{P}_n(m)$
\item[(iv)]$\mathfrak{C}_{n+2}^-(m) \subseteq \mathfrak{C}_{2,\,n}^-(m)\subseteq \mathcal{N}_n(m)$
\end{itemize}
\end{thm_A'}

Note that $\mathfrak{C}_1^+(m)\subseteq\mathcal{P}_0(m)$ and $\mathfrak{C}_1^-(m)\subseteq\mathcal{N}_0(m)$, even in the case of links. Using a near-identical proof to that of Theorem B, we obtain the following.

\begin{thm_B'}For any $m$--component string link $L$, the following statements are equivalent. 
\begin{itemize}
\item[(a)] $L\in \mathfrak{C}_1^+(m)$ (resp. $\mathfrak{C}_1^-(m)$)
\item[(b)] $\widehat{L}$ is concordant to a link each of whose components is a fusion knot of a split collection of positive (resp. negative) Hopf links
\item[(c)] $\widehat{L}$ is concordant to a link each of whose components can be changed to a ribbon knot by changing only positive (resp. negative) crossings (within the same component).
\end{itemize}
\end{thm_B'}

Recall that any knot $K$ lies in $\mathfrak{C}_1$ since it can be unknotted by changing some number of crossings. However, it is not true that every $m$--component link lies in $\mathfrak{C}_1(m)$ as we see below.

Recall that two links are link homotopic if we can go from one to the other via a deformation where each component may intersect itself but distinct components must remain disjoint. 

\begin{proposition}\label{nullhomotopic}If an $m$--component string link $L$ lies in $\mathfrak{C}_1(m)$, then $\widehat{L}$ is link homotopic to the $m$--component unlink and, in particular, the pairwise linking numbers of $\widehat{L}$ are zero.\end{proposition}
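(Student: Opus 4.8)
The plan is to unwind the definition and then run a movie argument. By Definition 1$'$, the hypothesis $L\in\mathfrak{C}_1(m)$ means precisely that the components $\widehat{L}_1,\dots,\widehat{L}_m$ of $\widehat{L}$ bound \emph{disjoint} kinky disks $D_1,\dots,D_m$ in $B^4$ (a Casson tower of height one is just a regular neighborhood of a single kinky disk). The statement about linking numbers is then immediate and worth disposing of first: since the $D_i$ are pairwise disjoint, $\operatorname{lk}(\widehat{L}_i,\widehat{L}_j)=D_i\cdot D_j=0$, using the standard fact that the linking number in $S^3$ of the boundaries of two properly immersed surfaces in $B^4$ equals their algebraic intersection number in $B^4$. (It also follows a posteriori from the link--homotopy statement, since pairwise linking numbers are link--homotopy invariants.)

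For the main assertion I would first put the $D_i$ in a convenient position: isotope them, keeping them disjoint --- there is ample room since everything takes place in the $4$--manifold $B^4$ --- so that the radial function $\rho\colon B^4\to[0,1]$ restricts to a Morse function on each $D_i$ with $\rho|_{\partial D_i}\equiv 1$, with all critical values and all double--point values of all the $D_i$ distinct, and (by the usual handle--trading for disks in the $4$--ball) with no local maxima of $\rho|_{D_i}$. Sweeping inward from $\partial B^4$ and reading off the ``movie'' of each $D_i$, one then sees $\widehat{L}_i$ evolve through a finite sequence of elementary moves, each supported on the colour--$i$ piece(s): ambient isotopy; band moves (saddles of $\rho|_{D_i}$); self--crossing changes (at the double points of $D_i$); and finally the deaths of a split collection of small unknots (the minima). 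Because the $D_i$ are disjoint, at every generic level the colour--$i$ and colour--$j$ pieces are disjoint, so the combined movie is such a sequence carried out ``in parallel'' over the colours.

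Of these moves, only the band moves and the final deaths fail to be steps of a link homotopy, since they change the number of components. The crux is that they can be absorbed: within a single colour, any circle created by a split band move is later destroyed (after further colour--$i$ isotopies, self--crossing changes, possible re--merges, and a final cap), and the net effect of such a creation--and--destruction taking place entirely within one colour is a self--homotopy of the surviving colour--$i$ component --- it is realised by growing a finger along the band, manipulating its tip by finger moves (i.e.\ self--crossing changes) and isotopies, and then retracting it. Carrying out the combined movie with all the band/cap activity within each colour replaced in this way by self--homotopies keeps the number of components equal to $m$ throughout and carries each $\widehat{L}_i$ to a small split unknot; this is exactly a link homotopy from $\widehat{L}$ to the $m$--component unlink, which is what we want.

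The step I expect to require the most care --- and the one I would call the main obstacle --- is this absorption/bookkeeping step: sweeping simultaneously through all the (distinct) critical and double--point values of all colours while verifying that every split band move within a colour is paired off with later colour--matched activity ending in a cap, and that all the intervening manipulations can genuinely be expressed using only self--crossing changes and ambient isotopies, so that the modified sweep is a bona fide link homotopy with a constant number of components. (The preliminary Morse--theoretic normalisation of the $D_i$ --- in particular removing the local maxima while preserving disjointness --- is routine but should also be stated with some care.)
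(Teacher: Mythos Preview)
Your argument is correct, but it takes a different route from the paper's. The paper does not run a movie on the immersed disks directly: instead it invokes the argument from the proof of Theorem~B$'$ (with kinks of both signs allowed) to produce an honest \emph{concordance} from $\widehat{L}$ to a link $\widehat{M}$ each of whose components can be turned into a ribbon knot by changing crossings within that component, and then quotes the Giffen--Goldsmith theorem (concordance implies link homotopy) twice --- once for $\widehat{L}\sim\widehat{M}$ and once for the ribbon link $\sim$ unlink. The linking-number statement is handled just as you do.

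Your direct movie argument is essentially a from-scratch proof of the singular-concordance version of Giffen--Goldsmith, and the ``absorption/bookkeeping'' step you flag as the main obstacle is exactly the heart of their argument. So your route is more self-contained but re-derives a known black box, whereas the paper simply cites it. Two small remarks that would streamline your write-up: first, once local maxima are gone, $H_1(D^2)=0$ forces the Reeb graph of each $D_i$ to be a tree, so every saddle is a split saddle and there are in fact no ``re-merges'' to worry about; second, eliminating local maxima while keeping the $D_i$ mutually disjoint is legitimate precisely because you allow \emph{immersed} disks (you may create new self-intersections of $D_i$ --- for embedded disks this would be the slice--ribbon problem), but it deserves a sentence of justification.
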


\begin{proof} Since $L\in\mathfrak{C}_1(m)$, the components of $\widehat{L}$ bound disjoint immersed disks in $B^4$. By following the proof of Theorem B$'$, we see that $\widehat{L}$ is concordant to a link $\widehat{M}$ which can be changed to a ribbon link by changing some number of crossings, i.e.\ $\widehat{M}$ is link homotopic to a ribbon link. However, we know from \cite{Giffen79, Gold79} that link concordance implies link homotopy. Since $\widehat{L}$ is concordant to $\widehat{M}$ and any $m$--component ribbon link is concordant to the $m$--component unlink, we have that $\widehat{L}$ is link homotopic to $\widehat{M}$ which is link homotopic to a ribbon link which is link homotopic to the $m$--component unlink. 

The linking number between two simple closed curves in $S^3$ can be computed as the signed intersection number between 2--chains bounded by them in $B^4$ \cite[p. 136]{Ro90}. Since the components of $\widehat{L}$ bound disjoint 2--chains (in particular, immersed disks) in $B^4$ all the pairwise linking numbers are zero. Alternatively, recall that pairwise linking numbers are particular cases of Milnor's invariants with distinct indices, which are invariants of link homotopy. The fact that pairwise linking numbers vanish then follows from the fact that $L$ is link homotopic to the unlink\footnote{This alternative proof was pointed out by an anonymous referee}. \end{proof}

As in Corollary \ref{gropeintersection}, we obtain the following result. 

\begin{cor_33'}Let $\mathcal{T}(m)$ denote the set of all topologically slice string links with $m$ components. Then, for any $m\geq 1$, $$\mathcal{T}(m)\subseteq\bigcap_{n=1}^\infty\mathcal{G}_n(m).$$ \end{cor_33'}

As we mentioned in Chapter \ref{defns}, the groups $\mathcal{G}_{2,\,n}(m)$ have not appeared previously in the literature, but several results relating to the grope filtration carry over easily. In the case of links, this can be seen in context of $k$--cobordism of links (\cite[Definition 9.1]{C90}\cite{Sat84}) as follows. We reference the corresponding results from \cite{Cot12} regarding the grope filtration below since our proofs are essentially the same.

\begin{proposition}[Proposition 6.4 of \cite{Cot12}]If $L\in\mathcal{G}_{2,\,n}(m)$ then $L$ is $2^{n+1}$--cobordant to a slice link, i.e.\ $L$ is null $2^{n+1}$--cobordant. \end{proposition}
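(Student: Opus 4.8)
The plan is to imitate the proof of \cite[Proposition 6.4]{Cot12}, which establishes the analogous statement for the grope filtration; as noted in the discussion of $\{\mathcal{G}_{2,\,n}\}_{n=0}^\infty$ in Section~\ref{defns}, arguments of this kind only ever use the first two stages of the grope together with a lower-central-series estimate on the tips, and in the present setting that estimate is furnished by the derived-series condition in the definition of $\mathcal{G}_{2,\,n}(m)$.

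First I would recall the notion of $k$-cobordism from \cite[Definition 9.1]{C90} (see also \cite{Sat84}), which for the purposes of this proof one may phrase as follows: $\widehat{L}$ is \emph{null $k$-cobordant}---equivalently, $k$-cobordant to a slice link---if the components $\widehat{L}_i$ bound a disjoint collection of embedded surfaces $\Sigma_i\subseteq B^4$ such that for each $i$ the inclusion-induced map $\pi_1(\Sigma_i)\to\pi_1(B^4-\bigsqcup_j\Sigma_j)$ has image in $\gamma_k$, the $k$-th term of the lower central series. I would also record the elementary algebraic fact that $G^{(n)}\subseteq\gamma_{2^n}(G)$ for every group $G$.

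Now let $L\in\mathcal{G}_{2,\,n}(m)$, so the components $\widehat{L}_i$ bound disjoint embedded gropes $G_i$ of height two in $B^4$ with the property that pushoffs of a symplectic basis for $H_1$ of each second-stage surface of $G:=\bigsqcup_i G_i$---the \emph{tips}---lie in $\pi_1(B^4-G)^{(n)}$. Take the $\Sigma_i$ to be the bottom-stage surfaces of the $G_i$; these are disjointly embedded in $B^4$, and $\pi_1(\Sigma_i)$ is generated by a symplectic basis each of whose curves bounds a second-stage surface of $G_i$ whose own symplectic basis consists of tips. Pushing such a second-stage surface off $\Sigma_i$ (using the grope framing) and off the remaining $\Sigma_j$ expresses the image of the corresponding bottom-stage curve in $\pi_1(B^4-\bigsqcup_j\Sigma_j)$ as a product of commutators of tip pushoffs. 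Since $B^4-G\hookrightarrow B^4-\bigsqcup_j\Sigma_j$ and the lower central series is functorial, the tip pushoffs lie in $\gamma_{2^n}(\pi_1(B^4-\bigsqcup_j\Sigma_j))$, whence each generator of $\pi_1(\Sigma_i)$ lies in $[\gamma_{2^n},\gamma_{2^n}]\subseteq\gamma_{2^{n+1}}(\pi_1(B^4-\bigsqcup_j\Sigma_j))$. Thus $\pi_1(\Sigma_i)$ maps into $\gamma_{2^{n+1}}$, which is precisely the condition for $\widehat{L}$ to be null $2^{n+1}$-cobordant.

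The one point requiring care---and the only place where imitating \cite[Proposition 6.4]{Cot12} takes any effort---is verifying that the depth estimate survives the change of ambient space: the tip curves lie on $G$, so one must pass to their pushoffs, check that these and the pushed-off second-stage surfaces are disjoint from every $\Sigma_j$, and confirm that the product-of-commutators expression is valid in $\pi_1(B^4-\bigsqcup_j\Sigma_j)$ rather than merely in $\pi_1(B^4-G)$. This is the standard grope-to-lower-central-series argument (compare the proofs in \cite{COT03}), and the factor-of-two bookkeeping relating grope height, the derived series $(\,\cdot\,)^{(n)}$, and the lower central series term $\gamma_{2^n}$ is exactly what produces the exponent $n+1$.
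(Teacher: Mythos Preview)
Your proposal is correct and follows essentially the same approach as the paper: both defer to the argument of \cite[Proposition~6.4]{Cot12}, observing that only the first two grope stages together with a depth condition on the tip curves are used, and that this depth condition is supplied here by the definition of $\mathcal{G}_{2,\,n}(m)$. You have simply spelled out in more detail the passage from the derived series to the lower central series via $G^{(n)}\subseteq\gamma_{2^n}(G)$ and the commutator bookkeeping, which the paper leaves implicit in its citation.
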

\begin{proof}The proof is essentially identical to the proof of Proposition 6.4 of \cite{Cot12}, which says that if $L\in\mathcal{G}_{n+2}(m)$ then $L$ is null $2^{n+1}$--cobordant. Her proof only uses the fact that each member of a symplectic basis for the first stage surfaces (call them $\Sigma_i$) of the gropes lies in $\pi_1(B^4-\sqcup_i\,\Sigma_i)$, which clearly still holds for a link in $\mathcal{G}_{2,\,n}(m)$.\end{proof}

Corollary 2.2 of \cite{Lin91} states that if a link $L$ is null $k$--cobordant, then Milnor's $\overline{\mu}$--invariants of $L$ with length less than or equal to $2k$ vanish. Therefore, we obtain the following corollary. 

\begin{corollary}[Corollary 6.6 of \cite{Cot12}]\label{mubarvanishes}If $L\in\mathcal{G}_{2,\,n}(m)$, then $\overline{\mu}_L(I)=0$ for $\lvert I\rvert \leq 2^{n+2}$.\end{corollary}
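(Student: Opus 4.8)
The plan is to deduce this directly from the two facts recalled immediately above, with no further work needed. First I would invoke the preceding proposition (the link-theoretic analogue of Proposition 6.4 of \cite{Cot12}): since $L\in\mathcal{G}_{2,\,n}(m)$, the link $L$ is null $2^{n+1}$--cobordant, i.e.\ $2^{n+1}$--cobordant to a slice link. Then I would apply Lin's result, Corollary 2.2 of \cite{Lin91}, which states that if a link is null $k$--cobordant then all of its Milnor $\overline{\mu}$--invariants of length at most $2k$ vanish. Taking $k=2^{n+1}$ and observing that $2k=2\cdot 2^{n+1}=2^{n+2}$, this gives exactly $\overline{\mu}_L(I)=0$ whenever $\lvert I\rvert\leq 2^{n+2}$, which is the assertion.

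The only point requiring a moment's care is the bookkeeping of exponents: the cobordism-length bound produced by the previous proposition is $2^{n+1}$, whereas Lin's theorem doubles this to control $\overline{\mu}$--invariants of length up to $2^{n+2}$, so the $\overline{\mu}$ bound is one power of two larger than the cobordism bound. One should also note that the relevant $\overline{\mu}$--invariants are genuinely well-defined (with zero indeterminacy) in this range, but this is automatic: the indeterminacy is generated by lower-order $\overline{\mu}$--invariants, which vanish by the same argument applied to shorter multi-indices, and in any case this is already incorporated into the statement of \cite[Corollary 2.2]{Lin91}. Consequently there is no genuine obstacle here; the entire content of the corollary is carried by the preceding proposition together with \cite{Lin91}.
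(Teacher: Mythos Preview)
Your proposal is correct and matches the paper's approach exactly: the paper simply states Lin's result \cite[Corollary 2.2]{Lin91} and the preceding proposition, then writes ``Therefore, we obtain the following corollary,'' with no further proof given. Your bookkeeping of the exponent $2\cdot 2^{n+1}=2^{n+2}$ and your remark on indeterminacy are sound and, if anything, more detailed than what the paper provides.
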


Since $\mathfrak{C}_{2,\,n}(m)\subseteq \mathcal{G}_{2,\,n}(m)$ for all $n$ and $m$, we also obtain the following. 

\begin{corollary}[Corollary 6.6 of \cite{Cot12}]\label{mubarvanishes2}If $L\in\mathcal{C}_{2,\,n}(m)$, then $\overline{\mu}_L(I)=0$ for $\lvert I\rvert \leq 2^{n+2}$.\end{corollary}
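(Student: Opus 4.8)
The plan is to reduce this statement to Corollary~\ref{mubarvanishes} via the inclusion $\mathfrak{C}_{2,\,n}(m)\subseteq\mathcal{G}_{2,\,n}(m)$, which is the first half of part (ii) of Theorem~A$'$. So the first step is to spell out that inclusion in the link setting. Suppose $L\in\mathfrak{C}_{2,\,n}(m)$, so that the components $\widehat{L}_i$ of $\widehat{L}$ bound disjoint Casson towers $T_i$ of height two with each member of a standard set of curves for $T_i$ lying in $\pi_1(B^4-\sqcup_i\, T_i)^{(n)}$. Applying the link version of Proposition~\ref{towersandgropes} inside each $T_i$ separately (all the constructions there are local to a single tower, so performing them componentwise keeps the gropes disjoint) produces disjointly embedded gropes $G_i\subseteq T_i$ of height two with $\partial G_i=\widehat{L}_i$ and untwisted framing.

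Second, one checks — exactly as in the proof of the proposition $\mathfrak{C}_{2,\,n}\subseteq\mathcal{G}_{2,\,n}\subseteq\mathcal{F}_n$ in Section~\ref{tower_results} — that the symplectic bases for the first homology of the second-stage surfaces of the $G_i$ are precisely (pushoffs of) the standard sets of curves for the $T_i$. Since $G_i\subseteq T_i$ we have $B^4-\sqcup_i\,T_i\subseteq B^4-\sqcup_i\,G_i$, so those curves lie in $\pi_1(B^4-\sqcup_i\,G_i)^{(n)}$ as well. Hence $L\in\mathcal{G}_{2,\,n}(m)$. Finally, Corollary~\ref{mubarvanishes} gives $\overline{\mu}_L(I)=0$ for $\lvert I\rvert\leq 2^{n+2}$, which is the desired conclusion.

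The proof is essentially formal once the machinery is in place, so there is no serious obstacle; the only point requiring a little care is the bookkeeping needed to transfer the derived-series condition from the towers to the gropes, namely verifying both that each grope is contained in its tower (so the complements are nested in the right direction) and that the second-stage symplectic basis of the grope is the standard set of curves of the tower. Both facts were already established in the knot case in Section~\ref{tower_results}, and they pass to links verbatim because every construction is supported inside a single (embedded, disjoint) Casson tower.
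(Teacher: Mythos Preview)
Your proposal is correct and follows essentially the same approach as the paper: the paper simply states that the corollary follows from Corollary~\ref{mubarvanishes} together with the inclusion $\mathfrak{C}_{2,\,n}(m)\subseteq\mathcal{G}_{2,\,n}(m)$ (Theorem~A$'$(ii)), while you have additionally spelled out why that inclusion holds in the link setting.
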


\begin{proposition}\label{linkdiffs}For $m\geq 2^{n+2}$ and $n\geq0$,
\begin{enumerate}
\item[(a)] $\mathbb{Z}\subseteq \mathcal{F}_{n}(m)\slash\mathcal{G}_{2,\,n}(m)$,
\item[(b)] $\mathbb{N}\subseteq \mathcal{P}_{n}(m)\slash\mathcal{G}_{2,\,n}(m)$,
\item[(c)] $\mathbb{N}\subseteq \mathcal{N}_{n}(m)\slash\mathcal{G}_{2,\,n}(m)$.
\end{enumerate}
\end{proposition}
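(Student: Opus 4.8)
The plan is to separate these filtrations from $\mathcal{G}_{2,n}(m)$ by means of a single Milnor invariant of length $2^{n+2}$, detected through Corollary~\ref{mubarvanishes}, and to use iterated Bing doubles as the building blocks. I would fix a knot $K$ with $a_2(K)\neq 0$, where $a_2$ denotes the coefficient of $z^2$ in the Conway polynomial; for part (b) I would take $K$ to be the closure of a positive braid (so $K\in\mathcal{P}_0$ and one may take $a_2(K)=1$), for part (c) its mirror image (so $K\in\mathcal{N}_0$), and in every case replace $K$ by $K\# K$ if necessary so that also $\mathrm{Arf}(K)=0$. For $j\geq 0$ put $J_j=\#^j K$ (with $J_0$ the unknot), let $L_j$ be the $(n+1)$--fold iterated Bing double of $J_j$, a link of $2^{n+1}\le 2^{n+2}\le m$ components, and realise $L_j$ as an $m$--component string link by taking a split union with a trivial string link on the remaining $m-2^{n+1}$ strands. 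For part (a) I would also allow $j\in\mathbb{Z}$, reading $J_{-j}$ as the $j$--fold connected sum of the reverse mirror of $K$.

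First I would compute the relevant Milnor invariants of $L_j$. It is a standard fact about iterated Bing doubles that there is a multi-index $I$ with $|I|=2^{n+2}$ (each of the $2^{n+1}$ doubled strands occurring twice) for which every $\overline{\mu}$--invariant of $L_j$ of length less than $2^{n+2}$ vanishes --- so that $\overline{\mu}_{L_j}(I)$ is an honest integer, free of indeterminacy --- and for which $\overline{\mu}_{L_j}(I)=\pm j\cdot a_2(K)$. This last equality I would deduce from the classical identity $\overline{\mu}_{1122}(\mathrm{Bing}^1(K))=a_2(K)$, the fact that iterated Bing doubling is compatible with connected sum of companions, the additivity of $a_2$ under connected sum, and the additivity under string link concatenation (and sign reversal under inversion) of the first non-vanishing Milnor invariant.

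Next I would establish the filtration memberships and conclude. Since iterated Bing doubling raises the level of the solvable filtration and $J_j$ is $0$--solvable (as $\mathrm{Arf}(J_j)=0$), $n+1$ Bing doublings yield $L_j\in\mathcal{F}_n(m)$; running the analogous construction inside positive definite (resp.\ negative definite) $4$--manifolds, starting from $J_j\in\mathcal{P}_0$ (resp.\ $J_j\in\mathcal{N}_0$), gives $L_j\in\mathcal{P}_n(m)$ (resp.\ $L_j\in\mathcal{N}_n(m)$) for all $j\geq 0$. Now if $L_j\cdot L_k^{-1}\in\mathcal{G}_{2,n}(m)$ then Corollary~\ref{mubarvanishes} forces $\overline{\mu}_{L_j\cdot L_k^{-1}}(I)=0$; but by the previous paragraph this invariant equals $\pm(j-k)\,a_2(K)$, which is non-zero for $j\neq k$. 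Hence the classes $[L_j]$ in $\mathcal{C}(m)/\mathcal{G}_{2,n}(m)$ are pairwise distinct, and since $L_j\cdot L_k$ represents $L_{j+k}$ (and $L_j\cdot L_{-j}$ is slice, as the Bing double of a slice knot is slice), the map $j\mapsto[L_j]$ is an injective homomorphism. This gives $\mathbb{Z}\hookrightarrow\mathcal{F}_n(m)/\mathcal{G}_{2,n}(m)$ for (a), and restricting to $j\geq 0$ gives $\mathbb{N}\hookrightarrow\mathcal{P}_n(m)/\mathcal{G}_{2,n}(m)$ and $\mathbb{N}\hookrightarrow\mathcal{N}_n(m)/\mathcal{G}_{2,n}(m)$ for (b) and (c).

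The main obstacle will be the filtration memberships in the third step, and in particular the positive and negative cases: one must know (or verify) that iterated Bing doubling raises the $n$--solvable filtration level, and --- more delicately, since this appears less standard --- that when the construction is carried out inside positive (resp.\ negative) definite $4$--manifolds it raises the positive (resp.\ negative) filtration level, so that iterated Bing doubles of knots in $\mathcal{P}_0$ (resp.\ $\mathcal{N}_0$) land in $\mathcal{P}_n(m)$ (resp.\ $\mathcal{N}_n(m)$). A secondary point requiring care is pinning down the multi-index $I$ and checking that all shorter Milnor invariants of $L_j$ vanish, which is exactly what makes $\overline{\mu}_{L_j}(I)$ well defined and additive under concatenation.
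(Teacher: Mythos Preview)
Your approach has a genuine gap at the Milnor invariant step. The iterated Bing double of a \emph{knot} is a boundary link: each component of the Bing double pattern is null-homologous in the solid torus and bounds a genus-one surface there disjoint from the other component, so $BD(K)$ (and hence $BD^{n+1}(K)$) has components bounding disjoint Seifert surfaces in $S^3$. Consequently \emph{all} Milnor $\overline{\mu}$--invariants of $L_j=BD^{n+1}(J_j)$ vanish, and in particular the claimed identity $\overline{\mu}_{1122}(BD(K))=a_2(K)$ is false (the left side is identically zero). Corollary~\ref{mubarvanishes} therefore gives you no obstruction to $L_j\in\mathcal{G}_{2,\,n}(m)$, and the separation argument collapses. (Your filtration-membership step is fine: the Cha--Powell lemma does show iterated Bing doubling raises $\mathcal{P}$-- and $\mathcal{N}$--level, and the $n$--solvable case is standard. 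The problem is entirely on the obstruction side.)

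The paper avoids this by working with iterated Bing doubles of the Hopf \emph{link} rather than of a knot. For part~(a) the building block is $BD^{n+1}(H)$, a $2^{n+2}$--component link whose first non-vanishing Milnor invariant has length $2^{n+2}$ with \emph{distinct} indices; Otto's work places it in $\mathcal{F}_n$, and additivity of the first non-vanishing $\overline{\mu}$ under concatenation yields the $\mathbb{Z}$. For parts~(b) and~(c) the paper starts from a specific $4$--component link $L\in\mathcal{P}_0(4)$ (obtained from an unlink by a generalized positive crossing, as in Cochran--Tweedy) and uses the Cha--Powell lemma to get $BD^n(L)\in\mathcal{P}_n(2^{n+2})$; since $L$ is link homotopic to $BD(H)$ and Milnor invariants with distinct indices are link homotopy invariants, $BD^n(L)$ inherits the same non-vanishing $\overline{\mu}$. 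The use of distinct-index invariants is what makes the obstruction survive, and it is exactly what your knot-based construction cannot supply.
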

\begin{proof}The proof of $(a)$ is very closely related to Otto's proof of \cite[Corollary 6.8]{Cot12} in light of Corollary \ref{mubarvanishes}. Here is a short sketch. Let $H$ denote the positive Hopf link, and $BD^i(H)$ its $i^\text{th}$ iterated Bing double (where each component of a link gets Bing doubled at each step). Otto shows that $BD^{n+1}(H)\in\mathcal{F}_n(2^{n+2})$ for each $n$. Work of Cochran \cite[Theorem 8.1]{C90} then shows that $\overline{\mu}_{BD^{n+1}(H)}(I)=1$ for some $I$ of length $2^{n+2}$ with distinct indices (note that $BD^{n+1}(L)$ has $2^{n+2}$ components) and additionally, all $\overline{\mu}$--invariants of smaller length vanish. Corollary \ref{mubarvanishes} shows that $BD^{n+1}(H)\in\mathcal{F}_n(2^{n+2})\slash\mathcal{G}_{2,\,n}(2^{n+2})$ for each $n$. Since the first non-zero $\overline{\mu}$--invariant is additive under concatenation of string links \cite[Theorem 8.13]{C90}\cite{O89} and each $\mathcal{F}_n(2^{n+2})$ is a subgroup of $\mathcal{C}(2^{n+2})$, we see that $BD^{n+1}(H)$ generates an infinite cyclic subgroup of $\mathcal{F}_{n}(2^{n+2})\slash\mathcal{G}_{2,\,n}(2^{n+2})$. By adding unknotted and unlinked components to $BD^{n+1}(H)$ away from all of the other components, we see that $\mathbb{Z}\subseteq \mathcal{F}_{n}(m)\slash\mathcal{G}_{2,\,n}(m)$ for all $m\geq 2^{n+2}$ and $n\geq0$.

\begin{figure}[t!]
  \begin{center}
  \includegraphics[width=2.75in]{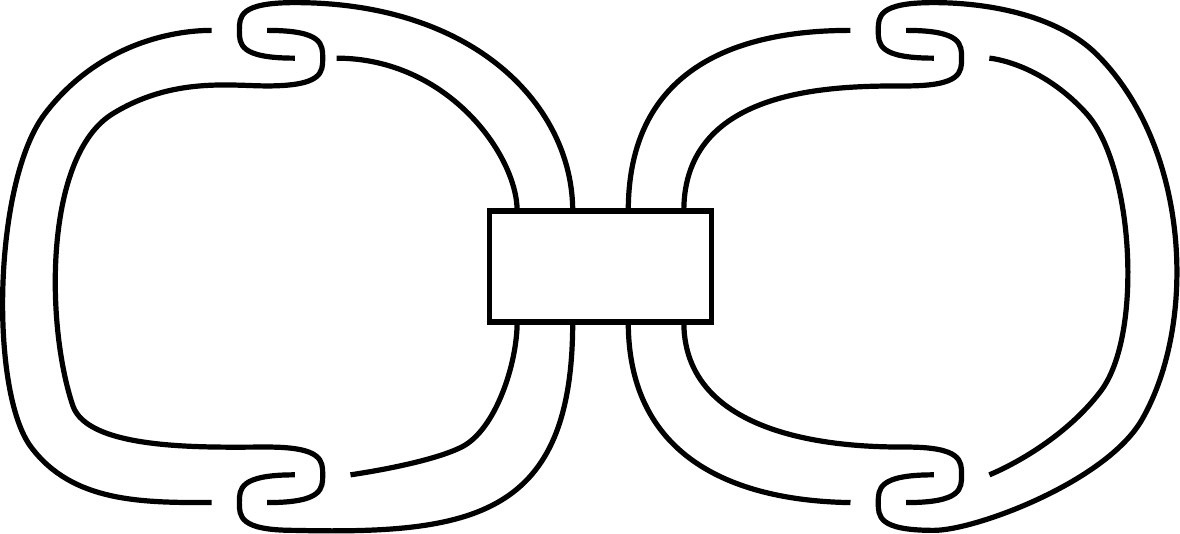}
    \put(-1.5,0.57){$-1$}
  \caption[A link in $\mathcal{P}_0(4)-\mathfrak{C}_1^\pm(4)$]{The above link is in $\mathcal{P}_0(4)$ but not in either $\mathfrak{C}_1^\pm(4)$. The strands going through the box marked with $-1$ are given a full negative twist relative to one another. }\label{linkex}
  \end{center}
\end{figure}

We give the proof for $(b)$; taking concordance inverses of these examples will complete the proof for $(c)$. Consider the link $L$ given in Figure \ref{linkex}. In \cite[Example 4.13]{CTwee13}, it is shown that $L\in\mathcal{P}_0(4)$ since it is obtained from an unlink by adding a so-called `generalized positive crossing'. (This operation was defined in \cite{CTwee13} and consists of adding a full \textit{negative} twist to a collection of strands where each link component is represented algebraically zero times, i.e.\ this corresponds exactly to the box marked with $-1$ in Figure \ref{linkex}.) By \cite[Lemma 3.7]{ChaPow14}, $BD^{n}(L)\in\mathcal{P}_n(2^{n+2})$. However, $BD^{n}(L)$ is link homotopic to $BD^{n+1}(H)$. Since $\overline{\mu}$--invariants with distinct indices are invariants of link homotopy, we see that $BD^{n}(L)\in \mathcal{P}_{n}(2^{n+2})\slash\mathcal{G}_{2,\,n}(2^{n+2})$ by Corollary \ref{mubarvanishes}. Unlike $\mathcal{F}_n(2^{n+2})$, $\mathcal{P}_n(2^{n+2})$ is merely a submonoid of $\mathcal{C}(2^{n+2})$ and therefore, we only get $\mathbb{N}\subseteq \mathcal{P}_{n}(2^{n+2})\slash\mathcal{G}_{2,\,n}(2^{n+2})$. By adding unknotted and unlinked components to $BD^{n}(L)$, we see that $\mathbb{N}\subseteq \mathcal{P}_{n}(m)\slash\mathcal{G}_{2,\,n}(m)$ for all $m\geq 2^{n+2}$ and $n\geq0$.
\end{proof}

In the case of links we also obtain the following additional results, which we are currently unable to prove in the case of knots. 

\begin{proposition}$\mathcal{P}_0(m)\neq\mathfrak{C}_1^+(m)$ and $\mathcal{N}_0(m)\neq\mathfrak{C}_1^-(m)$ for $m\geq 4$. \end{proposition}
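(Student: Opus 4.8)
The plan is to exhibit an explicit $m$-component link (for $m \geq 4$) that lies in $\mathcal{P}_0(m)$ but not in $\mathfrak{C}_1^+(m)$, and then take mirror images / concordance inverses to obtain the corresponding separation for $\mathcal{N}_0(m)$ and $\mathfrak{C}_1^-(m)$. The natural candidate is exactly the link $L$ of Figure \ref{linkex}, which already appears in the proof of Proposition \ref{linkdiffs}: by \cite[Example 4.13]{CTwee13} we have $L\in\mathcal{P}_0(4)$, since $L$ is obtained from an unlink by adding a single generalized positive crossing. To get examples for all $m\geq 4$, adjoin $m-4$ split unknotted, unlinked components, which preserves membership in $\mathcal{P}_0(m)$.

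The key step is to show $L\notin\mathfrak{C}_1^+(m)$. For this I would use Proposition \ref{nullhomotopic}: if $L$ (or any string link whose closure is $\widehat L$) were in $\mathfrak{C}_1(m)$ — and a fortiori if it were in $\mathfrak{C}_1^+(m)\subseteq\mathfrak{C}_1(m)$ — then $\widehat L$ would be link homotopic to the $m$-component unlink, and in particular all pairwise linking numbers and, more strongly, all Milnor invariants with distinct indices would vanish. So it suffices to check that $\widehat L$ has a nonvanishing Milnor invariant with distinct indices. The link in Figure \ref{linkex} is built so that its components are algebraically unlinked in pairs (the twist box is chosen so each component passes through it algebraically zero times), but the higher-order linking is nontrivial: indeed, as noted in the proof of Proposition \ref{linkdiffs}, $L$ is link homotopic to $BD(H)$ (the Bing double of the positive Hopf link on four components, i.e.\ the $n=0$ case of $BD^{n+1}(H)$), and Cochran \cite[Theorem 8.1]{C90} shows $\overline\mu_{BD(H)}(I)=1$ for some length-$4$ index $I$ with distinct entries, with all shorter $\overline\mu$-invariants vanishing. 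Since $\overline\mu$-invariants with distinct indices are link-homotopy invariants, $\overline\mu_{\widehat L}(I)=1\neq 0$, so $\widehat L$ is not link homotopic to the unlink, hence $L\notin\mathfrak{C}_1(m)\supseteq\mathfrak{C}_1^+(m)$. Adding split trivial components does not change this. This gives $\mathcal{P}_0(m)\neq\mathfrak{C}_1^+(m)$ for $m\geq 4$.

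For the negative statement, let $\overline L$ denote the mirror image of $L$ (equivalently, take the concordance inverse, which has the opposite twist box, a full positive twist). Then $\overline L\in\mathcal{N}_0(m)$ by the same computation of \cite{CTwee13} applied to the mirror, and the link-homotopy obstruction is unchanged in absolute value — $\overline\mu_{\widehat{\overline L}}(I)=\pm 1\neq 0$ — so $\overline L\notin\mathfrak{C}_1(m)\supseteq\mathfrak{C}_1^-(m)$. Hence $\mathcal{N}_0(m)\neq\mathfrak{C}_1^-(m)$ for $m\geq 4$.

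The main obstacle, and really the only nontrivial point, is making sure the link-homotopy obstruction of Proposition \ref{nullhomotopic} genuinely detects $L$: one must verify that $\widehat L$ has a nonzero Milnor invariant with distinct indices, i.e.\ that $L$ is not link-homotopically trivial. This is handled by citing the link-homotopy equivalence $L\simeq BD(H)$ together with Cochran's computation of $\overline\mu$ for Bing doubles of the Hopf link; everything else (membership in $\mathcal{P}_0(m)$, adding trivial components, passing to mirrors) is routine. One should take a little care that $BD(H)$ here is the first Bing double on $4$ components and matches the $n=0$ instance of the family used in Proposition \ref{linkdiffs}, so that the cited computation applies verbatim.
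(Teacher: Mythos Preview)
Your proposal is correct and follows essentially the same approach as the paper: use the link $L$ of Figure~\ref{linkex}, invoke \cite[Example 4.13]{CTwee13} for $L\in\mathcal{P}_0(4)$, observe $L$ is link homotopic to $BD(H)$ and hence has nonvanishing $\overline{\mu}(1234)$, and conclude $L\notin\mathfrak{C}_1^+(m)$ via Proposition~\ref{nullhomotopic}; then take mirrors for the negative case. Your write-up is slightly more detailed (e.g.\ explicitly adding split components for $m>4$), but the argument is the same.
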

\begin{proof}Links demonstrating this inequality may be found in \cite[Example 4.13]{CTwee13}. The link $L$ shown in Figure \ref{linkex} is link homotopic to the Bing double of a Hopf link (this is easily seen by drawing a picture of both links; recall that the box in Figure \ref{linkex} containing a `-1' indicates a full negative twist of all the strands passing through it) and therefore, has non-zero $\overline{\mu}(1234)$. This implies that $L$ is not  link homotopic to the unlink, and therefore, by Proposition \ref{nullhomotopic} is not in $\mathfrak{C}_1^+(m)$. However, we see that $L\in\mathcal{P}_0(m)$ in \cite{CTwee13}.

The mirror image of the link in Figure \ref{linkex} is in $\mathcal{N}_0(m) - \mathfrak{C}_1^-(m)$.\end{proof}

We can actually do better by following the proof of Proposition \ref{linkdiffs}, as follows.

\begin{proposition}For $m\geq 2^{n+2}$ and $n\geq0$,
\begin{enumerate}
\item[(a)] $\mathbb{Z}\subseteq \mathcal{F}_{n}(m)\slash\mathfrak{C}_1(m)$,
\item[(b)] $\mathbb{N}\subseteq \mathcal{P}_{n}(m)\slash\mathfrak{C}_1^+(m)$,
\item[(c)] $\mathbb{N}\subseteq \mathcal{N}_{n}(m)\slash\mathfrak{C}_1^-(m)$.
\end{enumerate}
\end{proposition}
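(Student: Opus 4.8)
The plan is to imitate the proof of Proposition \ref{linkdiffs} almost verbatim, replacing the filtration $\mathcal{G}_{2,\,n}(m)$ in the denominators by the (smaller) filtration $\mathfrak{C}_1(m)$, $\mathfrak{C}_1^+(m)$, $\mathfrak{C}_1^-(m)$ respectively. The key observation that makes this work is Proposition \ref{nullhomotopic}: if a string link lies in $\mathfrak{C}_1(m)$ (and in particular if it lies in $\mathfrak{C}_1^\pm(m)$, since $\mathfrak{C}_1^\pm(m)\subseteq\mathfrak{C}_1(m)$), then its closure is link homotopic to the unlink, so all $\overline{\mu}$--invariants with distinct indices vanish. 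This is exactly the property of $\mathcal{G}_{2,\,n}(m)$-members that was used in Proposition \ref{linkdiffs}, via Corollary \ref{mubarvanishes}; so we just need to replace the invocation of Corollary \ref{mubarvanishes} by an invocation of Proposition \ref{nullhomotopic}.

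In detail: for (a), as in the proof of Proposition \ref{linkdiffs}(a), let $H$ be the positive Hopf link and $BD^{n+1}(H)$ its $(n+1)$--st iterated Bing double, which Otto shows lies in $\mathcal{F}_n(2^{n+2})$ \cite[Corollary 6.8]{Cot12}. By \cite[Theorem 8.1]{C90}, $\overline{\mu}_{BD^{n+1}(H)}(I)=1$ for some length-$2^{n+2}$ multi-index $I$ with distinct entries, so $BD^{n+1}(H)$ is \emph{not} link homotopic to the unlink; by Proposition \ref{nullhomotopic} it is therefore not in $\mathfrak{C}_1(2^{n+2})$, i.e.\ it represents a nontrivial class in $\mathcal{F}_n(2^{n+2})\slash\mathfrak{C}_1(2^{n+2})$. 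Additivity of the first nonvanishing $\overline{\mu}$--invariant under concatenation \cite[Theorem 8.13]{C90}\cite{O89}, together with the fact that $\mathcal{F}_n(2^{n+2})$ is a subgroup of $\mathcal{C}(2^{n+2})$, shows $BD^{n+1}(H)$ generates an infinite cyclic subgroup of $\mathcal{F}_n(2^{n+2})\slash\mathfrak{C}_1(2^{n+2})$. Adding unknotted, unlinked components away from everything else extends this to all $m\geq 2^{n+2}$, proving (a). For (b) and (c), repeat the proof of Proposition \ref{linkdiffs}(b)--(c): with $L$ the link of Figure \ref{linkex}, we have $L\in\mathcal{P}_0(4)$ \cite[Example 4.13]{CTwee13}, hence $BD^n(L)\in\mathcal{P}_n(2^{n+2})$ by \cite[Lemma 3.7]{ChaPow14}; since $BD^n(L)$ is link homotopic to $BD^{n+1}(H)$ it has the same distinct-index $\overline{\mu}$--invariants, so again by Proposition \ref{nullhomotopic} it is not in $\mathfrak{C}_1^+(2^{n+2})$. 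As $\mathcal{P}_n(2^{n+2})$ is only a submonoid of $\mathcal{C}(2^{n+2})$, additivity of the first nonvanishing $\overline{\mu}$--invariant gives only $\mathbb{N}\subseteq\mathcal{P}_n(2^{n+2})\slash\mathfrak{C}_1^+(2^{n+2})$; adding trivial components yields (b) for all $m\geq 2^{n+2}$. Taking mirror images (equivalently, concordance inverses) of these examples — which lie in $\mathcal{N}_n$ but not $\mathfrak{C}_1^-$ by the symmetric version of Proposition \ref{nullhomotopic} — gives (c).

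There is essentially no genuine obstacle here; the only thing one must be a little careful about is that the containments $\mathfrak{C}_1^\pm(m)\subseteq\mathfrak{C}_1(m)$ and the hypotheses of Proposition \ref{nullhomotopic} are being applied to \emph{string} links (so that the group/monoid structure and the concatenation-additivity of $\overline{\mu}$ make sense), while link homotopy and $\overline{\mu}$ are being read off from their closures; this is exactly the same bookkeeping already present in Proposition \ref{linkdiffs}, so it causes no trouble. One should also note that the statement only asserts the existence of a copy of $\mathbb{Z}$ (resp.\ $\mathbb{N}$) inside the quotient — since $\mathfrak{C}_1(m)$ need not be a subgroup, the quotient $\mathcal{F}_n(m)\slash\mathfrak{C}_1(m)$ is to be read as the image of $\mathcal{F}_n(m)$ in $\mathcal{C}(m)\slash\!\sim$ where $L\sim L'$ iff $L\#(L')^{-1}\in\mathfrak{C}_1(m)$, or simply as the set of $\mathfrak{C}_1(m)$-cosets meeting $\mathcal{F}_n(m)$; the distinct-index $\overline{\mu}$-invariants are well-defined on this set and detect the claimed free (sub)structure, which is all that is needed.

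\begin{proof}
The proof is a verbatim adaptation of the proof of Proposition \ref{linkdiffs}, with Corollary \ref{mubarvanishes} replaced by Proposition \ref{nullhomotopic}. Since $\mathfrak{C}_1^\pm(m)\subseteq\mathfrak{C}_1(m)$ and, by Proposition \ref{nullhomotopic}, any string link in $\mathfrak{C}_1(m)$ has closure link homotopic to the $m$--component unlink, every Milnor invariant $\overline{\mu}(I)$ with $I$ having distinct indices vanishes on links in $\mathfrak{C}_1(m)$ and on links in $\mathfrak{C}_1^\pm(m)$.

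For $(a)$, let $H$ denote the positive Hopf link and $BD^{i}(H)$ its $i^\text{th}$ iterated Bing double. By Otto's \cite[Corollary 6.8]{Cot12}, $BD^{n+1}(H)\in\mathcal{F}_n(2^{n+2})$. By \cite[Theorem 8.1]{C90}, $\overline{\mu}_{BD^{n+1}(H)}(I)=1$ for some multi-index $I$ of length $2^{n+2}$ with distinct indices, and all $\overline{\mu}$--invariants of $BD^{n+1}(H)$ of smaller length vanish. Hence $BD^{n+1}(H)$ represents a nontrivial class in $\mathcal{F}_n(2^{n+2})\slash\mathfrak{C}_1(2^{n+2})$ by Proposition \ref{nullhomotopic}. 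Since the first nonvanishing $\overline{\mu}$--invariant is additive under concatenation of string links \cite[Theorem 8.13]{C90}\cite{O89} and $\mathcal{F}_n(2^{n+2})$ is a subgroup of $\mathcal{C}(2^{n+2})$, $BD^{n+1}(H)$ generates an infinite cyclic subgroup of $\mathcal{F}_n(2^{n+2})\slash\mathfrak{C}_1(2^{n+2})$. Adding unknotted and unlinked components away from all other components, we get $\mathbb{Z}\subseteq\mathcal{F}_n(m)\slash\mathfrak{C}_1(m)$ for all $m\geq 2^{n+2}$ and $n\geq 0$.

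For $(b)$, let $L$ be the link of Figure \ref{linkex}, which lies in $\mathcal{P}_0(4)$ by \cite[Example 4.13]{CTwee13}. By \cite[Lemma 3.7]{ChaPow14}, $BD^{n}(L)\in\mathcal{P}_n(2^{n+2})$. However $BD^{n}(L)$ is link homotopic to $BD^{n+1}(H)$, so its $\overline{\mu}$--invariants with distinct indices agree with those of $BD^{n+1}(H)$; in particular it is not link homotopic to the unlink, so by Proposition \ref{nullhomotopic} it is not in $\mathfrak{C}_1^+(2^{n+2})$, i.e.\ $BD^{n}(L)$ represents a nontrivial class in $\mathcal{P}_n(2^{n+2})\slash\mathfrak{C}_1^+(2^{n+2})$. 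As $\mathcal{P}_n(2^{n+2})$ is only a submonoid of $\mathcal{C}(2^{n+2})$, additivity of the first nonvanishing $\overline{\mu}$--invariant under concatenation gives $\mathbb{N}\subseteq\mathcal{P}_n(2^{n+2})\slash\mathfrak{C}_1^+(2^{n+2})$. Adding unknotted and unlinked components, $\mathbb{N}\subseteq\mathcal{P}_n(m)\slash\mathfrak{C}_1^+(m)$ for all $m\geq 2^{n+2}$ and $n\geq 0$. Taking concordance inverses (equivalently, mirror images) of these examples, which lie in $\mathcal{N}_n$ but not in $\mathfrak{C}_1^-$ by the same argument, proves $(c)$.
\end{proof}
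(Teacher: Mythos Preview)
Your proof is correct and follows essentially the same approach as the paper: reuse the links $BD^{n+1}(H)$ and $BD^{n}(L)$ from Proposition \ref{linkdiffs}, observe that their nonvanishing distinct-index $\overline{\mu}$--invariants prevent them from being link homotopic to the unlink, and invoke Proposition \ref{nullhomotopic} in place of Corollary \ref{mubarvanishes}. The paper's own proof is in fact terser than yours, simply pointing back to Proposition \ref{linkdiffs} and noting that Proposition \ref{nullhomotopic} does the job.
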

\begin{proof}In the proof of Proposition \ref{linkdiffs}, we demonstrated the existence of links which are in $\mathcal{F}_n(m)$ (resp. $\mathcal{P}_n(m)$, $\mathcal{N}_n(m)$) and have a non-zero $\overline{\mu}$--invariant with distinct indices. These links are therefore not link homotopic to the unlink, and as a result, by Proposition \ref{nullhomotopic} are not contained in $\mathfrak{C}_1(m)$ (resp. $\mathfrak{C}^+_1(m)$, $\mathfrak{C}_1^-(m)$). \end{proof}


%
%
%
\bibliographystyle{gtart}
\bibliography{knotbib}

%

\end{document}